\newcommand{\bigast}{\mathbin{\scalebox{1.5}{\ensuremath{\ast}}}}
\newcommand{\vf}{\vfill\end{document}}
\newcommand{\centersmallbullet}{{}_{{}^{{}^{
\scriptscriptstyle{\bullet\!}}}}}
\newtheorem{Theorem}[equation]{Theorem}
\newtheorem{Corollary}[equation]{Corollary}
\newtheorem{Property}[equation]{Property}
\newtheorem{Lemma}[equation]{Lemma}
\newtheorem{Fact}[equation]{Fact}
\theoremstyle{definition}
\newtheorem{Definition}[equation]{Definition}
\newtheorem{Notation}[equation]{Notation}
\newtheorem{Example}[equation]{Example}
\newtheorem{Remark}[equation]{Remark}
\numberwithin{equation}{section}
\title{Global sections of the positively twisted 
Green-Griffiths bundles}
\author{Victor Chen, Jo\"el Merker}
\begin{document}

\maketitle

\begin{abstract}
With various jet orders $k$ and weights $n$,
let $E_{k,n}^{\rm GG}$ be the Green-Griffiths bundles over
the projective space $\mathbb{P}^N (\mathbb{C})$.
Denote by $\mathcal{O} (d)$ the tautological line bundle 
over $\mathbb{P}^N (\mathbb{C})$.

Although only {\em negative} twists are of interest
for applications to complex hyperbolicity
(above general type projective submanifolds $Y \subset 
\mathbb{P}^N (\mathbb{C})$),
it is known that the {\em positive} twists 
$E_{k,n}^{\rm GG} \otimes \mathcal{O} (d)$
enjoy nontrivial global sections.

In this article, we establish that for every $d \geqslant 1$
and for every jet order $k \geqslant d-1$:
\[
\dim\,
H^0
\bigg(
\mathbb{P}^N,\,\,
\bigoplus_{n=1}^{\infty}
E_{k, n}^{\text{GG}} 
\otimes 
\mathcal{O}(d)
\bigg)
=
(N+1)^d.
\]

This theorem is actually a corollary of a recent work of Etesse,
devoted to a proof,
from the point of view of {\sl differentially homogeneous}
polynomials, of the so-called Schmidt-Kolchin-Reinhart
conjecture, by means of (advanced) Representation Theory. As Etesse
discovered a (simple) tight link with the Green-Griffiths bundles,
both statements are in fact equivalent.

Our objective is to set up an alternative proof of the above precise
dimension estimate, from the
Green-Griffiths point of view (only). 
More precisely, we find an explicit description
of all concerned global sections. 
Our arguments are elementary, 
and use only determinants,
linear algebra, monomial orderings.

One old hope is to discover some explicit formulas
for global sections of
{\em negatively twisted} Green-Griffiths bundles 
over projective general type submanifolds 
$Y \subset \mathbb{P}^N (\mathbb{C})$,
a problem still open.

\end{abstract}

\tableofcontents

\section{Introduction}

It is known that homogeneous polynomials in the variables $X_0, X_1,
\dots, X_N$ can be dehomogenized to represent holomorphic sections of
the tautological line bundle $\mathcal{O}(d)$ over the projective
space $\mathbb{P}^N(\mathbb{C})$. The homogeneity condition
corresponds to the changes of charts conditions. The converse is also
true. Every global section is in fact affinely polynomial, and
corresponds to some homogeneous polynomial.

A $k$-jet is defined as the $N \times k$-tuple of the successive
derivatives of some holomorphic curve:
\[
\gamma: \mathbb{C} \longrightarrow \mathbb{P}^N.
\]
More precisely, if:
\[
\begin{array}{ccccl}
\phi & : & U & \longrightarrow & \quad \quad \mathbb{C}^N
\\
& & x & \longmapsto & 
\big(
\phi_1(x), \dots, \phi_N(x)
\big)
\\
\end{array}
\]
is a chart defined in some open $U \subset 
\mathbb{P}^N (\mathbb{C})$, then a $k$-jet is a tuple:
\[
\big(x_1', \dots, x_N', x_1'', \dots, x_N'', \dots\big)
=
\big(
(\phi_1 \circ \gamma)'(0),
\dots,
(\phi_N  \circ \gamma)'(0),
(\phi_1 \circ \gamma)''(0),
\dots,
(\phi_N \circ \gamma)''(0),
\dots
\big).
\]

It can be verified that, as soon as second derivatives are involved,
the bundle of $k$-jets is not a vector bundle. To recover linearity,
one can consider the polynomials in the variables $x_1', \dots,
x_N^{(k)}$ whose monomials all share the same {\sl weight}. Here, the
weight of a monomial $x_{i_1}^{(l_1)}x_{i_2}^{(l_2)} \cdots
x_{i_t}^{(l_t)}$ is defined as the sum $l_1 + l_2 + \cdots + l_t$
of the orders of derivatives.

Consider the bundle whose fibers are given by these polynomials, and
denote their shared weight by $n$. This bundle is the {\sl
Green-Giffiths bundle} of order $k$ and weight $n$, denoted $E_{k,
n}^{\text{GG}}$, and it is a vector bundle
(\cite{Demailly-2015, Merker-2015}).
Denote by $\mathcal{O} (d)$ the tautological line bundle 
over $\mathbb{P}^N (\mathbb{C})$.

In this paper, we are interested in the 
global
sections of the positive twists
of the Green-Griffiths bundle, namely in:
\[
H^0
\Big(
\mathbb{P}^N(\mathbb{C}),\,
E_{k, n}^{\text{GG}} \otimes \mathcal{O}(d)
\Big),
\]
for some integer $d \geqslant 1$. Furthermore, we will only consider
these bundles over the whole projective space $\mathbb{P}^N$.

Our main theorem gives the dimension of the space of holomorphic
sections.

\begin{Theorem}
\label{Thm-N-1-d}
For every integer $d \geqslant 1$ 
and for every jet order $k \geqslant
d-1$, one has:
\[
\dim
H^0
\bigg(
\mathbb{P}^N,\,\,
\bigoplus_{n=1}^{\infty}
E_{k, n}^{\text{GG}} 
\otimes 
\mathcal{O}(d)
\bigg)
=
(N+1)^d.
\]
\end{Theorem}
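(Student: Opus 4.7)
The plan is to reformulate the geometric statement algebraically, exhibit an explicit basis of size $(N+1)^d$ built from Wronskian-type determinants, and verify it via a monomial-ordering argument. The first step is to translate the global section space into algebra: a section of $\bigoplus_n E_{k,n}^{\text{GG}} \otimes \mathcal{O}(d)$ over $\mathbb{P}^N$ lifts to a polynomial $P$ in the homogeneous jet variables $X_i^{(l)}$ (for $0 \leq i \leq N$ and $0 \leq l \leq k$) satisfying the \emph{differential homogeneity condition of degree $d$},
\[
P\bigl((\lambda X_0)^{(l)}, \ldots, (\lambda X_N)^{(l)}\bigr) \,=\, \lambda^d \cdot P\bigl(X_i^{(l)}\bigr),
\]
for every formal scalar function $\lambda$, where the successive derivatives $\lambda, \lambda', \lambda'', \ldots$ are viewed as independent indeterminates and $(\lambda X_i)^{(l)} = \sum_{j=0}^l \binom{l}{j} \lambda^{(j)} X_i^{(l-j)}$ by Leibniz. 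The direct sum over $n$ corresponds to the weight grading in which $X_i^{(l)}$ carries weight $l$.

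For each sequence $\mathbf{i} = (i_1, \ldots, i_d) \in \{0, 1, \ldots, N\}^d$, I would construct an explicit polynomial $P_{\mathbf{i}}$ that is differentially homogeneous of degree $d$. The extreme cases are immediate: for a constant sequence $\mathbf{i} = (i, \ldots, i)$, take $P_{\mathbf{i}} = X_i^d$; for $d$ pairwise distinct indices, take the classical Wronskian
\[
P_{\mathbf{i}} \,=\, \det\!\bigl(X_{i_a}^{(b-1)}\bigr)_{a,b = 1}^{d},
\]
whose differential homogeneity follows from the row expansion $(\lambda X_{i_a})^{(b-1)} = \lambda X_{i_a}^{(b-1)} + \sum_{j=1}^{b-1} \binom{b-1}{j} \lambda^{(j)} X_{i_a}^{(b-1-j)}$ combined with the multilinearity and antisymmetry of the determinant. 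For intermediate sequences, assign to $\mathbf{i}$ a Young-tableau-like pattern partitioning the indices into determinantal blocks, and define $P_{\mathbf{i}}$ as a suitable product or linear combination of block-Wronskians. The hypothesis $k \geq d-1$ is exactly what is needed to accommodate the largest block, a $d \times d$ Wronskian involving derivatives up to order $d-1$.

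The verification proceeds via a well-chosen monomial order on $\mathbb{C}[X_i^{(l)}]$ (for instance, graded lex with derivative order $l$ as primary criterion and index $i$ as secondary). Compute the leading monomial of each $P_{\mathbf{i}}$ and show that the map $\mathbf{i} \mapsto \mathrm{LM}(P_{\mathbf{i}})$ is injective on $\{0, \ldots, N\}^d$; this yields linear independence. For spanning, iteratively subtract multiples of the $P_{\mathbf{i}}$ from an arbitrary differentially homogeneous polynomial $P$ of degree $d$ to kill its leading monomial, so that the reduction terminates and, combined with the count $(N+1)^d$, delivers the stated dimension. The principal obstacle is the explicit description and normalization of the $P_{\mathbf{i}}$ for sequences of intermediate symmetry type --- those corresponding to mixed Schur summands $S^\lambda V$ with $1 < \ell(\lambda) < d$ (where $V = \mathbb{C}^{N+1}$) in the decomposition of $V^{\otimes d}$. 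Realizing these elementarily, without invoking Schur-Weyl duality, requires finding the right combinatorial organization of the Wronskian blocks via Young tableaux and verifying binomial-coefficient identities that encode the Young-symmetrizer structure; once these formulas are in hand, injectivity of the leading-monomial map becomes a careful but essentially mechanical check.
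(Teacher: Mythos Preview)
Your proposal shares the paper's broad strategy---explicit determinantal sections plus a monomial-ordering argument---but there are two genuine gaps, and the paper's treatment of each differs from what you sketch.

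\textbf{The construction of the $P_{\mathbf{i}}$.} For intermediate sequences you propose ``a suitable product or linear combination of block-Wronskians'' organized by Young tableaux. Products of ordinary Wronskians (those with pairwise distinct row labels) cannot suffice: already for $N=1$ and $d=3$, the products built from $X_0$, $X_1$, and the $2\times 2$ Wronskian $W = X_0 X_1' - X_1 X_0'$ span only a $6$-dimensional space, none of whose elements involves a second derivative, whereas one needs $2^3 = 8$. The missing elements, for instance
\[
X_0^2 X_1'' - 2X_0 X_0' X_1' - X_0 X_1 X_0'' + 2X_1(X_0')^2,
\]
are single $3\times 3$ determinants in which the \emph{same} variable $X_0$ occupies two columns with shifted derivative patterns. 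The paper (following Reinhart) uses precisely such determinants: the columns are $\mathcal{C}_\alpha(X_i) = \bigl(X_i^{(\alpha)}, \tbinom{\alpha}{1}X_i^{(\alpha-1)}, \ldots, X_i, 0, \ldots\bigr)^{\!T}$, assembled into one square matrix indexed not by sequences in $\{0,\ldots,N\}^d$ but by tuples of strictly increasing integer sequences $\underline{\underline{\alpha}} \in \mathcal{B}^{N+}$ (shown to be in bijection with $\{0,\ldots,N\}^d$ by an elementary counting lemma). The paper then works throughout with the affine dehomogenizations $\det\Delta_0(\underline{\underline{\alpha}})$ in the chart $U_0$, never invoking Young symmetrizers.

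\textbf{The upper bound.} You propose to reduce an arbitrary differentially homogeneous $P$ by repeatedly killing its leading monomial with some $P_{\mathbf{i}}$. For this to terminate at zero you would need to know that the leading monomial of \emph{every} nonzero $P \in V_d^{\mathrm{diff}}$ is already $\mathrm{LM}(P_{\mathbf{i}})$ for some $\mathbf{i}$; that statement is equivalent to the inequality $\dim \leq (N+1)^d$ you are trying to prove, and it does not follow from the count alone. The paper establishes the upper bound by a different device. It first shows that the $\det\Delta_0(\underline{\underline{\alpha}})$, taken over \emph{all} $\underline{\underline{\alpha}} \in \mathcal{B}^{N+}$, form a basis of the entire affine polynomial ring $\mathbb{C}[x_{0i}^{(l)}]$. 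It then passes to a second affine chart $U_1$, where each $\det\Delta_0(\underline{\underline{\alpha}})$ becomes $y_1^{-(M(\underline{\underline{\alpha}})+1)}\det\Delta_1(\underline{\underline{\alpha}})$, and proves the key lemma that distinct $\underline{\underline{\alpha}}$ yield distinct \emph{smallest} rational terms for an order that compares first the pole order in $y_1$. Consequently, in any linear combination this smallest term survives uncancelled; for the combination to extend holomorphically across $U_1$ after the twist by $\mathcal{O}(d)$, the surviving pole order forces $M(\underline{\underline{\alpha}}) \leq d-1$ for every $\underline{\underline{\alpha}}$ that appears. This pole-non-cancellation argument in a second chart---rather than leading-monomial reduction within one chart---is the mechanism that closes the upper bound.
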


This theorem is actually a corollary of a recent work of Etesse.
In~\cite{Etesse-2023},
Etesse shows that holomorphic sections of
$E_{k, n}^{\text{GG}}
\otimes \mathcal{O}(d)$ are in
one-to-one correspondence
with {\sl differentially homogeneous polynomials} of degree $d$.
These
polynomials $P$
constitute a subspace $V_d^{\text{diff}} \subset V_d$ 
of the space $V_d$ of all
polynomials
which are homogeneous of degree $d$ with respect to all 
the jet variables $X_i^{(l)}$. By definition, 
a polynomial $P = P \big( \big\{ X_i^{(l)} \big\} \big)$
in $V_d$
belongs to $V_d^{\rm diff}$ if and only if, 
for any polynomial $Q \in \mathbb{C} [T]$ with $T$ being an extra
`time' variable, one has:
\[
P
\Big(
\big\{
\big(
Q\,X_i
\big)^{(l)}
\big\}
\Big)
\,=\,
Q^d\,
P
\big(
\big\{
X_i^{(l)}
\big\}
\big),
\]
where, by Leibniz' rule:
\[
\big(
Q\,X_i
\big)^{(l)}
=
\sum_{s=0}^l\,
{\textstyle{\binom{l}{s}}}\,
Q^{(s)}\,
X_i^{(l-s)},
\]
so that {\em no} derivatives of $Q$ remain\,\,---\,\,only
$Q^d$ appears. The correspondence is as follows.

\begin{Theorem}[Etesse]
For every $k \geqslant d-1$, the map:
\[
\begin{array}{ccccl}
V_d^{\text{\rm diff}} & \longrightarrow & \quad \quad 
H^0\Big(
\mathbb{P}^N,\,\, 
\bigoplus_{n=1}^{\infty} E_{k, n}^{\text{GG}} 
\otimes \mathcal{O}(d)
\Big)
\\
P & \longmapsto & 
\Big(\frac{P}{X_0^d}, \frac{P}{X_1^d}, \dots, 
\frac{P}{X_N^d}
\Big)
\\
\end{array}
\]
is an isomorphism.\qed
\end{Theorem}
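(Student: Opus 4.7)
The plan is to establish the isomorphism by exhibiting an explicit inverse map, using the differential-homogeneity identity $P(\{(QX_i)^{(l)}\}) = Q^d\,P(\{X_i^{(l)}\})$ both as the mechanism that produces chart-transition compatibility in the forward direction and as the guarantor of pole-clearing in the inverse direction.

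\textbf{Forward direction and injectivity.} Given $P \in V_d^{\text{diff}}$, I would specialize $Q := 1/X_j$ in the defining identity to obtain
\[
P\bigl(\{(X_i/X_j)^{(l)}\}\bigr) \,=\, P(\{X_i^{(l)}\})\,/\,X_j^d.
\]
Because $(X_j/X_j)^{(l)} = \delta_{l,0}$, the left-hand side is a polynomial $\sigma_j$ in the $N(k+1)$ affine jet variables $x_i^{(l)} := (X_i/X_j)^{(l)}$ (for $i \neq j$ and $0 \leq l \leq k$), and its weight-$n$ component is a section over $U_j$ of $E_{k,n}^{\text{GG}}$. The cocycle $\sigma_j = (X_i/X_j)^d\,\sigma_i$ on $U_i \cap U_j$, which is precisely the transition law of $\mathcal{O}(d)$, falls out of the twin presentations $\sigma_j = P/X_j^d$ and $\sigma_i = P/X_i^d$; hence $(\sigma_0, \dots, \sigma_N)$ is a global section of $\bigoplus_{n=1}^\infty E_{k,n}^{\text{GG}} \otimes \mathcal{O}(d)$. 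Injectivity is then immediate: reading $\sigma_j = P/X_j^d$ as an identity of rational functions in the indeterminates $X_i^{(l)}$, the vanishing of $\sigma_j$ forces $P = 0$.

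\textbf{Surjectivity.} Given a global section $(\sigma_0, \dots, \sigma_N)$, I would reconstruct $P$ by
\[
P(\{X_i^{(l)}\}) \,:=\, X_0^d \cdot \sigma_0\bigl(x_i \mapsto X_i/X_0,\ x_i^{(l)} \mapsto (X_i/X_0)^{(l)}\bigr),
\]
which a priori lies only in the localization $\mathbb{C}[X_i^{(l)}][X_0^{-1}]$. The transition $\sigma_j = (X_0/X_j)^d\,\sigma_0$ on $U_0 \cap U_j$ produces, after substitution of the affine jet coordinates on $U_j$ and multiplication by $X_j^d$, a parallel presentation
\[
P(\{X_i^{(l)}\}) \,=\, X_j^d \cdot \sigma_j\bigl(y_i \mapsto X_i/X_j,\ y_i^{(l)} \mapsto (X_i/X_j)^{(l)}\bigr),
\]
placing $P$ simultaneously in $\mathbb{C}[X_i^{(l)}][X_j^{-1}]$ for every $j \in \{0, \dots, N\}$. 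Since the $X_j$ are pairwise coprime irreducibles in the UFD $\mathbb{C}[X_i^{(l)}]$, the intersection $\bigcap_{j=0}^N \mathbb{C}[X_i^{(l)}][X_j^{-1}]$ collapses to $\mathbb{C}[X_i^{(l)}]$ itself, forcing $P$ to be polynomial. Differential homogeneity is then a formal check: substituting each $X_i^{(l)} \mapsto (QX_i)^{(l)}$ leaves the invariant ratios $(X_i/X_0)^{(l)}$ unchanged while turning the prefactor $X_0^d$ into $(QX_0)^d = Q^d\,X_0^d$.

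\textbf{Expected main obstacle.} The delicate step is verifying that the two presentations of $P$ derived from $\sigma_0$ and $\sigma_j$ really coincide as rational functions, since only this agreement lets the intersection-of-localizations argument run. Making this precise reduces to propagating Leibniz' rule through the chain $X_i/X_j = (X_i/X_0)\cdot(X_0/X_j)$ and its iterated $t$-derivatives, combined with the given transition $\sigma_j = (X_0/X_j)^d \sigma_0$. Once this coincidence is certified, the remaining pole-clearing via the UFD property and the formal differential-homogeneity check are both elementary.
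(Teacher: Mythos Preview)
The paper does not prove this theorem: it is attributed to Etesse, marked with a \qed, and the reader is referred to \cite{Etesse-2023}. So there is no in-paper proof to compare your attempt against; what follows is an assessment of your argument on its own merits.

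Your outline is essentially correct for the statement that $V_d^{\text{diff},(k)} \cong H^0\big(\mathbb{P}^N,\bigoplus_n E_{k,n}^{\text{GG}}\otimes\mathcal{O}(d)\big)$ for \emph{every} $k$. Two remarks: first, your specialization $Q := 1/X_j$ is not a polynomial in $T$, so you should say explicitly that the defining identity, being a polynomial identity in the free jets $Q^{(s)}$, extends from polynomial $Q$ to arbitrary formal substitutions. Second, the step you flag as the ``main obstacle'' is in fact routine: once both $\sigma_0$ and $\sigma_j$ are pulled back to the homogeneous jet variables via $x_{0i}^{(l)}\mapsto (X_i/X_0)^{(l)}$ and $x_{ji}^{(l)}\mapsto (X_i/X_j)^{(l)}$, the $\mathcal{O}(d)$-transition $\sigma_j=(X_0/X_j)^d\sigma_0$ becomes literally $X_j^d\tilde\sigma_j=X_0^d\tilde\sigma_0$, because the change of affine jet charts is a tautology at the homogeneous level. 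The UFD intersection and the differential-homogeneity check then go through as you describe.

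There is, however, a genuine gap. The theorem concerns $V_d^{\text{diff}}$ with \emph{no a priori bound} on the jet order of $P$, whereas the target bundle only sees jets of order $\leq k$. When you write ``the left-hand side is a polynomial $\sigma_j$ in the $N(k{+}1)$ affine jet variables $x_i^{(l)}$ for $0\leq l\leq k$'', you are silently assuming that every $P\in V_d^{\text{diff}}$ involves only $X_i^{(l)}$ with $l\leq d-1\leq k$. This bound is exactly where the hypothesis $k\geq d-1$ enters, and it is not addressed anywhere in your argument. Without it, your forward map does not land in $H^0$ of the order-$k$ bundle, and what you actually prove is the isomorphism for $V_d^{\text{diff},(k)}$ rather than for $V_d^{\text{diff}}$. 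The order bound is true, but it requires its own argument; you should either supply one or cite it explicitly.
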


In the same paper~\cite{Etesse-2023}, 
Etesse was able to prove the following result on
the dimension of the space of differentially homogeneous polynomials,
formerly known as the {\sl Schmidt-Kolchin-Reinhart Conjecture}.

\begin{Theorem}[Etesse-Reinhart-Schmidt-Kolchin]
For every integer $d$, the space $V_d^{\rm diff}$ 
of differentially homogeneous polynomials 
of degree $d$ has dimension $(N+1)^d$.\qed
\end{Theorem}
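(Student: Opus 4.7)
The theorem is a direct corollary of the two preceding statements in the excerpt. Fix any jet order $k \geq d-1$ (for concreteness, $k = d-1$). Etesse's isomorphism theorem supplies an explicit linear bijection
\[
V_d^{\text{diff}} \;\xrightarrow{\sim}\; H^0\bigg(\mathbb{P}^N,\, \bigoplus_{n=1}^\infty E_{k,n}^{\text{GG}} \otimes \mathcal{O}(d)\bigg),
\qquad
P \longmapsto \Big(\tfrac{P}{X_0^d},\,\tfrac{P}{X_1^d},\,\ldots,\,\tfrac{P}{X_N^d}\Big),
\]
so the two spaces have the same dimension. Theorem~\ref{Thm-N-1-d} computes the right-hand side to be $(N+1)^d$, and combining this with the isomorphism immediately yields $\dim V_d^{\text{diff}} = (N+1)^d$, which is exactly the claim.

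That is the entire argument for the stated theorem. The reduction is purely formal, so the corollary itself presents no genuine obstacle: all substantive work sits upstream, in the proof of Theorem~\ref{Thm-N-1-d}. There the Green-Griffiths approach of the paper must establish both a lower bound, by exhibiting $(N+1)^d$ explicit global sections (presumably built from Wronskian-type determinants of jet matrices together with monomials in the homogeneous coordinates, so as to match the tensor basis of $(\mathbb{C}^{N+1})^{\otimes d}$), and a matching upper bound, by ruling out any further sections via a monomial-ordering analysis of the chart-transition conditions imposed by the $\mathcal{O}(d)$-twist. Once both bounds are in place on the Green-Griffiths side, the present theorem drops out by transfer of structure through the isomorphism above.

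An alternative route, deliberately \emph{not} taken by the paper, would be to attack $V_d^{\text{diff}}$ head-on: differentiate the defining identity $P\big(\{(QX_i)^{(l)}\}\big) = Q^d\, P\big(\{X_i^{(l)}\}\big)$ with respect to the Taylor coefficients of $Q$ at $T=0$ to extract an infinite family of first-order differential operators annihilating $P$, and then compute the common kernel of these operators inside $V_d$ using the representation theory of $GL_{N+1}$ on $(\mathbb{C}^{N+1})^{\otimes d}$. This is essentially Etesse's original proof strategy. The whole point of the present paper is to bypass this representation-theoretic route and instead read the dimension off the geometric side, via Theorem~\ref{Thm-N-1-d}. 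Accordingly, the main expected obstacle is not in the corollary at all, but in the upper bound inside Theorem~\ref{Thm-N-1-d}: constructing sections is straightforward, but proving that the explicit family exhausts the global sections is where the announced tools---determinants, linear algebra, and monomial orderings---must carry the full weight of the argument.
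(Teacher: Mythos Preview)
Your proposal is correct and matches the paper's logical structure exactly. The paper states this theorem with a \qed\ and attributes it to Etesse's representation-theoretic proof; its own independent contribution is Theorem~\ref{Thm-N-1-d}, from which the present statement follows immediately via Etesse's isomorphism---precisely the deduction you give, with the substantive work (lower bound via explicit determinantal sections, upper bound via the monomial-ordering argument on chart transitions) residing entirely upstream in Theorem~\ref{Thm-N-1-d}, as you correctly identify.
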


This conjecture was stated for the first time by Schmidt
in~\cite{Schmidt-1979}. Schmidt found $(N+1)^d$ as a lower
bound, and he surmised `{\sl Perhaps, equality is true here}'.

In~\cite{Reinhart-1999}, Reinhart proved the conjecture in
dimension $N = 1$, and he
also provided a construction for a free family of $(N+1)^d$ elements in
$V_d^{\text{diff}}$. Such a family was
obtained by taking determinants of
certain $d \times d$ matrices.

In~\cite{Etesse-2023}, Etesse proved the Schmidt-Kolchin-Reinhart 
conjecture in
the general case, {\em i.e.} in any dimension $N \geqslant 1$. 
His idea is to squeeze the space of differentially
homogeneous polynomials $V_d^{\text{diff}, (k)}$ (with the variables
having formal derivatives all $\leqslant k$):
\[
\overline{V_d^{\text{diff}, (k)}}
\subset
V_d^{\text{diff}, (k)}
\subset
V_d^{(k)},
\]
where $\overline{V_d^{\text{diff}, (k)}}$ is the space generated by
the free family of Reinhart
determinants, and where $V_d^{(k)}$ is the space of all
homogeneous polynomials in the $X_i^{(l)}$
with $l \leqslant k$. 

These spaces can be studied as
representations of the linear group $GL_{N + 1}(\mathbb{C})$, and they
admit the following decompositions as sums of irreductible
subrepresentations:
\[
\bigoplus_{\lambda \vdash d} 
S^{\lambda}(\mathbb{C}^N)^{\oplus f_{\lambda}}
\subset
\bigoplus_{\lambda \vdash d} 
S^{\lambda}(\mathbb{C}^N)^{\oplus \xi_{\lambda}}
\subset
\bigoplus_{\lambda \vdash d} 
S^{\lambda}(\mathbb{C}^N)^{\oplus d_{\lambda}(N + 1)},
\]
where $f_{\lambda}$, $\xi_\lambda$, 
$d_{\lambda} (N+1)$ are combinatorial integers
related to the number of fillings of the {\sl Young diagram} of shape
a partition $\lambda$ of $d$.

By introducing an intermediate algebraic problem, Etesse 
establishes 
that the dimension $\xi_{\lambda}$ of the space generated by
$\lambda$-highest weight vectors must be equal to $f_{\lambda}$,
and he 
concludes that the space $V_d^{\text{diff}, (k)}$ coincides with
the space $\overline{V_d^{\text{diff}, (k)}}$ generated by the
determinants.

\medskip

The main goal of this paper is to describe 
explicitly
the sections of the twisted
Green-Griffiths bundle $E_{k, n}^{\text{GG}} \otimes \mathcal{O}(d)$,
and to establish,
independently and differently,
that its dimension is equal to $(N + 1)^d$.
Over the affine chart $U_0 = \{ X_0 \neq 0\}$, 
such a section can be written as a polynomial in
the space:
\[
E 
= 
\mathbb{C}
\big[
x_{01}, \dots,  x_{0N}, x_{01}', \dots, x_{0N}', \dots
\big].
\]
We need to find the polynomials in $E$ such that, after the
changes of charts from $U_0$
to $U_i = \{X_i \neq 0\}$, namely:
\[
\aligned
x_{ij} 
&=
\frac{x_{0j}}{x_{0i}} 
\ \ \ \ \
\text{if} \ i \neq j,
\\
x_{ii}
&=
\frac{1}{x_{0i}},
\endaligned
\]
the obtained rational function has only poles of order
$\leqslant d$, hence compensated by the
positive twist $ (\centersmallbullet) \otimes \mathcal{O}(d)$.

To do that, we start by introducing a 
better
basis of the space $E$. 
We found this basis
by dehomogenizing the determinants of Reinhart and
Etesse. In affine coordinates $x_{01}, \dots, x_{0N}$ 
on $\mathbb{C}^N \subset \mathbb{P}^N$,
the elements of this basis are certain determinants,
for instance with $N = 3$:
\[
\footnotesize
\aligned
\def\arraystretch{1.75}
\begin{vmatrix}
x_{03}^{'''} & 
x_{03}^{(4)} & 
x_{02}^{''} & 
x_{02}^{(5)} &
x_{01}^{(4)} & 
x_{01}^{(7)} & 
x_{01}^{(9)}
\\
\binom{3}{1}x_{03}^{''} & 
\binom{4}{1}x_{03}^{'''} & 
\binom{2}{1}x_{02}^{'} & 
\binom{5}{1}x_{02}^{(4)} &
\binom{4}{1}x_{01}^{'''} & 
\binom{7}{1}x_{01}^{(6)} & 
\binom{9}{1}x_{01}^{(8)}
\\
\binom{3}{2}x_{03}^{'} & 
\binom{4}{2}x_{03}^{''} & 
\binom{2}{2}x_{02} & 
\binom{5}{2}x_{02}^{'''} &
\binom{4}{2}x_{01}^{''} & 
\binom{7}{2}x_{01}^{(5)} & 
\binom{9}{2}x_{01}^{(7)}
\\
\binom{3}{3}x_{03} & 
\binom{4}{3}x_{03}^{'} & 
0 & 
\binom{5}{3}x_{02}^{''} &
\binom{4}{3}x_{01}^{'} & 
\binom{7}{3}x_{01}^{(4)} & 
\binom{9}{3}x_{01}^{(6)}
\\
0 & 
\binom{4}{4}x_{03} & 
0 & 
\binom{5}{4}x_{02}^{'} &
\binom{4}{4}x_{01} & 
\binom{7}{4}x_{01}^{'''} & 
\binom{9}{4}x_{01}^{(5)}
\\
0 & 
0 & 
0 & 
\binom{5}{5}x_{02} &
0 & 
\binom{7}{5}x_{01}^{''} & 
\binom{9}{5}x_{01}^{(4)}
\\
0 & 
0 & 
0 & 
0 &
0 & 
\binom{7}{6}x_{01}^{'} & 
\binom{9}{6}x_{01}^{'''}
\end{vmatrix},
\endaligned
\]
that are constructed, generally, in the following way. 
By convention, for $k, n \in \mathbb{Z}$:
\[
{\textstyle{\binom{n}{k}}}
=
0
\ \ \ \ \ \ \ \ \ \ 
\text{whenever}\ \
k\leqslant -1\,\,
\text{or}\,\,
n+1\leqslant k.
\]

First, choose the size of the square matrix, {\em e.g}
$7 \times 7$ above.
Each column has the following form:
\[
\begin{pmatrix}
x_{0i}^{(l)} \\
\binom{l}{1} x_{0i}^{(l-1)} \\
\binom{l}{2} x_{0i}^{(l-2)} \\
\vdots \\
x_{0i} \\
0 \\
\vdots
\end{pmatrix}
\eqno
{\scriptstyle{(N\,\geqslant\,i\,\geqslant\,1)}},
\] 
with as many lines as the chosen size.
With descending $i = N, N-1, \dots, 1$
(from left to right),
the matrix incorporates $N$
blocks of such columns, 
with the block number $i$ 
involving only the variables
$x_{0i}, x_{0i}', x_{0i}'', \dots$.
Two further conditions must be satisfied: 
{\small (C1)} In each block number $i$, 
the derivative orders $x_{0i}^{(l_1)}$, $x_{0i}^{(l_2)}$, 
$x_{0i}^{(l_3)}$, 
$\dots$ appearing on the
top (on the first line),
must be stricly increasing $l_1 < l_2 < l_3 < \cdots$
when going from left to right;
{\small (C2)} 
The diagonal terms of the obtained matrix must all be nonzero.

Of course, such a matrix is entirely characterized by the
derivatives $x_{0i}^{(l)}$
appearing on its first line. We denote by
$\mathcal{B}^{N+}$ the set of such possible first lines, and for each
such line $\underline{\underline{\alpha}}$ in $\mathcal{B}^{N+}$,
we denote
by $\Delta_0(\underline{\underline{\alpha}})$ the associated matrix.

We can now express an alternative, interesting basis for E,
which will be of crucial use.

\begin{Theorem}
The family:
\[
\Big\{
\det\,
\Delta_0(\underline{\underline{\alpha}})
\Big\}
_{\underline{\underline{\alpha}}  \in \mathcal{B}^{N+}}
\]
is a basis of $E = \mathbb{C} [x_{0i}, x_{0i}', x_{0i}'', \dots]$.
\end{Theorem}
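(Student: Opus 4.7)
The plan is to equip $E$ with a monomial order $\prec$ for which, for every $\underline{\underline{\alpha}} \in \mathcal{B}^{N+}$, the leading monomial of $\det\,\Delta_0(\underline{\underline{\alpha}})$ is exactly its diagonal product; to check that the diagonal-product assignment $\underline{\underline{\alpha}} \mapsto m(\underline{\underline{\alpha}})$ is a bijection onto $\mathrm{Mon}(E)$; and to combine these with a natural bigrading of $E$ into finite-dimensional pieces to conclude the basis property.

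I first set up the bijection. Writing $\underline{\underline{\alpha}} \in \mathcal{B}^{N+}$ as a tuple of strictly increasing sequences $l_{i,1} < \cdots < l_{i,k_i}$ (one per block $i$), put $g_i := \sum_{i' > i} k_{i'}$ and $b_{i,j} := l_{i,j} - g_i - j + 1$. Condition (C2) gives $b_{i,j} \geqslant 0$, and (C1) gives $b_{i,j+1} \geqslant b_{i,j}$. The $j$-th diagonal entry in block $i$ is the nonzero term $\binom{l_{i,j}}{g_i + j - 1}\, x_{0i}^{(b_{i,j})}$, so the diagonal product of $\Delta_0(\underline{\underline{\alpha}})$ is a nonzero scalar times $m(\underline{\underline{\alpha}}) := \prod_{i, j} x_{0i}^{(b_{i,j})}$. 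Conversely, given any monomial $m = \prod_{i,l} (x_{0i}^{(l)})^{a_{i,l}}$, sorting each block's exponent multiset as $b_{i,1} \leqslant \cdots \leqslant b_{i,k_i}$ and setting $l_{i,j} := b_{i,j} + g_i + j - 1$ automatically satisfies (C1) and (C2); the two constructions are mutually inverse.

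I next take $\prec$ to be the degree-reverse-lexicographic order associated with the variable ordering $x_{0i}^{(l)} \prec x_{0i'}^{(l')}$ iff $i > i'$, or $i = i'$ and $l < l'$ (so higher block index and lower derivative order correspond to smaller variables). The pivotal claim is that under $\prec$ the leading monomial of $\det\,\Delta_0(\underline{\underline{\alpha}})$ is $m(\underline{\underline{\alpha}})$. I would prove it by expanding $\det\,\Delta_0(\underline{\underline{\alpha}}) = \sum_\sigma \operatorname{sgn}(\sigma) \prod_c M_{\sigma(c), c}$ and showing that every non-identity $\sigma$ yields a monomial strictly $\prec$-smaller than the diagonal. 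For $\sigma$ acting only within the rows of a single block, the identity matching yields the ``least-spread'' multiset of derivative orders among all row-column matchings, which under grevlex translates into strict $\prec$-dominance by a direct majorization. For cross-block row swaps, one reduces to elementary transpositions and checks, using (C1) together with the triangular support of each column, that each such swap strictly increases the exponent of some $\prec$-large variable. Small cases with $N = 1, 2, 3$ confirm the computations and guide the general argument.

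Linear independence is then immediate: any hypothetical nontrivial finite relation $\sum_\alpha c_\alpha \det\,\Delta_0(\alpha) = 0$ would force its $\prec$-largest contributing leading monomial to receive coefficient zero, a contradiction. For spanning I exploit the fact that each $\det\,\Delta_0(\underline{\underline{\alpha}})$ is homogeneous both for the total degree $n$ and for the \emph{derivative weight} $\phi(m) := \sum_{i,l} a_{i,l}(m) \cdot l$, since every permutation term has the same $\sum_c (l_c - \sigma(c) + 1) = (\sum_c l_c) - \binom{n}{2}$. Hence $E$ decomposes as $\bigoplus_{n, \phi} V_{n, \phi}$ with each $V_{n, \phi}$ the \emph{finite-dimensional} subspace spanned by the monomials of degree $n$ and weight $\phi$. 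The restriction of the bijection to the $\underline{\underline{\alpha}}$'s of size $n$ and weight $\phi$ hits precisely the monomial basis of $V_{n, \phi}$; by distinctness of leading monomials, the corresponding determinants form a basis of the finite-dimensional $V_{n, \phi}$, and gathering over all $(n, \phi)$ yields the global basis statement. The main obstacle is the leading-monomial claim for $\det\,\Delta_0(\underline{\underline{\alpha}})$ when cross-block row swaps occur: a general non-identity $\sigma$ can reshuffle which columns ``see'' which binomial-weighted rows in a way that mixes blocks, and ruling out any tie-breaking win for a non-identity term under $\prec$ requires careful combinatorial bookkeeping exploiting (C1) and the column-support constraints.
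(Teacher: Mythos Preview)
Your strategy coincides with the paper's: set up a bijection from $\mathcal{B}^{N+}$ to the monomials of $E$ via the diagonal product (this is exactly the paper's map $\tau^+$), show that the diagonal product is the extremal monomial of $\det\Delta_0(\underline{\underline{\alpha}})$ under a chosen monomial order, deduce linear independence from distinctness of extremal monomials, and obtain spanning by slicing $E$ into finite-dimensional graded pieces whose dimensions are matched by the bijection. The paper grades by a single weight $w_{\mathcal{A}^N}$ (which equals your $n+\phi$); your $(n,\phi)$-bigrading is a harmless refinement of the same idea.

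The only substantive difference is the monomial order. The paper compares monomials by their \emph{largest} factor first and proves the diagonal is the \emph{smallest} term, via a short structural lemma: if every nonzero entry strictly increases as one moves to the upper right, then for any $\sigma\neq\mathrm{id}$ one locates the largest disturbed diagonal entry $m_{tt}$ and finds, by pigeonhole, some $m_{l\sigma(l)}$ in the upper-right rectangle of $(t,t)$ that strictly exceeds $m_{tt}$, forcing the product to be larger in that order. This handles all blocks at once in one paragraph. You instead use grevlex and want the diagonal to be the \emph{largest} term. These two orders are not reverses of one another on same-degree monomials (e.g.\ with $a\prec b\prec c$, both orders agree that $ab\prec bb$, so neither reverses the other), so your leading-term claim is genuinely a different statement, not a corollary of the paper's. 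Your sketch --- majorization within a block, transposition bookkeeping across blocks --- is exactly where you say the obstacle lies, and it is not yet a proof: ``strictly increases the exponent of some $\prec$-large variable'' is not the criterion that decides grevlex comparisons, which are governed by the \emph{smallest} variable where exponents differ. If you want a self-contained argument, adopting the paper's order is considerably cleaner than completing the grevlex case analysis.
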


For such a determinant, if $M(\underline{\underline{\alpha}})$ is the
highest derivative that appears in it, 
then we will show that in any other
affine chart $U_i = \{X_i \neq 0\}$,
the pole with the highest order has order
$M(\underline{\underline{\alpha}})+1$. 
A simple combinatoric argument, developed in Lemma \ref{cardinal_famille_libre}, shows that
the number of such $\det\,
\Delta_0
(\underline{\underline{\alpha}})$
with 
$M(\underline{\underline{\alpha}}) \leqslant d-1$
equals $(N+1)^d$.

\begin{Theorem}
Let $d$ be some fixed integer. 
Then, for every $k \geqslant d-1$ and every
$\underline{\underline{\alpha}} \in \mathcal{B}^{N+}$ 
with $M(\underline{\underline{\alpha}}) \leqslant d-1$, one has:
\[
\det\,
\Delta_0(\underline{\underline{\alpha}})
\in
H^0\big(
\mathbb{P}^N,\,
E^{\textbf{GG}}_{k, \infty}(d)
\big).
\]
In addition, these $\det\, \Delta_0
(\underline{\underline{\alpha}})$ are linearly independent,
hence:
\[
\dim\,
H^0
\big(
\mathbb{P}^N,\,
E^{\textbf{GG}}_{k, \infty}(d)
\big)
\geqslant
(N+1)^d.
\]
\end{Theorem}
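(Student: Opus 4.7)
The theorem separates into two claims: (i) each $\det\,\Delta_0(\underline{\underline{\alpha}})$ with $M(\underline{\underline{\alpha}}) \leqslant d-1$ lies in $H^0\big(\mathbb{P}^N,\,E^{\textbf{GG}}_{k,\infty}(d)\big)$, and (ii) these sections are linearly independent, yielding the dimension lower bound. Claim (ii) is immediate from the preceding theorem: the whole family $\{\det\,\Delta_0(\underline{\underline{\alpha}})\}_{\underline{\underline{\alpha}} \in \mathcal{B}^{N+}}$ is a basis of $E$, so any subfamily is a fortiori free; the count $(N+1)^d$ of admissible $\underline{\underline{\alpha}}$ is the content of Lemma~\ref{cardinal_famille_libre}. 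The substantive work is claim (i).

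To prove (i), I would analyze the behavior of $\det\,\Delta_0(\underline{\underline{\alpha}})$ after the change of chart from $U_0$ to each $U_i$ ($i \neq 0$), governed by $x_{0j} = x_{ij}/x_{ii}$ for $j \neq i$ and $x_{0i} = 1/x_{ii}$. Iterated differentiation along a curve yields formulas of the shape $x_{0j}^{(l)} = N_l^{(j)}/x_{ii}^{l+1}$, where $N_l^{(j)}$ is a polynomial in the jet coordinates of $U_i$. After substitution, $\det\,\Delta_0(\underline{\underline{\alpha}})$ becomes a rational function whose only possible pole locus in $U_i$ is $\{x_{ii}=0\}$. If the pole order there is at most $d$, the $\mathcal{O}(d)$ transition factor $x_{ii}^d$ restores regularity across $U_i$, producing the desired global section.

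The main obstacle is that a naive, term-by-term bound on the Leibniz expansion vastly overshoots the target: every nonzero Leibniz term carries a denominator of $x_{ii}$-degree linear in $L = \sum_c l_c$, far above the sharp $M(\underline{\underline{\alpha}}) + 1 \leqslant d$. The required cancellations must be extracted from the specific Reinhart-column structure. A useful vantage point is the identity $c^{(l)}_j = (D+S)^l(x_{0j}\,e_1)$, where $D$ is the total derivative in the curve parameter and $S : e_r \mapsto e_{r+1}$ is the row-shift operator; each column is then the iterated image of a common seed under a single commuting operator. Upon passage to $U_i$, the leading-order-in-$1/x_{ii}$ contribution of every column is proportional to $e_1$, so the columns' leading parts span a single line and their joint determinant vanishes at its naive leading order. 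Propagating this cancellation order by order, via a block Laplace expansion or a Vandermonde-type identity adapted to the $(D+S)$-filtration, should drive the pole down to the sharp $M+1 \leqslant d$. As a fallback, if the direct analysis proves too involved, one can instead construct, for each admissible $\underline{\underline{\alpha}}$, an explicit differentially homogeneous polynomial $P \in V_d^{\text{diff}}$, obtained by augmenting $\Delta_0$ with a Wronskian-type block in the homogeneous coordinate $X_0$, whose dehomogenization at $X_0=1$ and $X_0^{(l)}=0$ (for $l \geqslant 1$) recovers $\det\,\Delta_0(\underline{\underline{\alpha}})$; Etesse's isomorphism then delivers globality immediately.
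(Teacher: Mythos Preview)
Your handling of claim~(ii) is correct and matches the paper: linear independence is inherited from Theorem~\ref{th_base2}, and the count $(N+1)^d$ is Lemma~\ref{cardinal_famille_libre}.

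For claim~(i), however, your main line of argument has a genuine gap. You correctly diagnose that the naive Leibniz expansion overshoots and that the column structure must force cancellations, and your reformulation of each column as $(D+S)^l(x_{0j}\,e_1)$ is valid. But you never actually exhibit the mechanism that collapses the pole order from the naive bound down to $M(\underline{\underline{\alpha}})+1$. Saying that the leading parts of the columns lie on a single line kills only one order; the phrases ``propagating this cancellation order by order'' and ``should drive the pole down to the sharp $M+1$'' are hopes, not arguments. Without a concrete identity or row/column operation scheme, the claimed pole bound is unproven.

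Your fallback is, curiously, almost exactly what the paper does --- but the paper does \emph{not} invoke Etesse's isomorphism (that would defeat its stated goal of an independent proof). Instead, it builds the $(M+1)\times(M+1)$ homogeneous matrix $\mathcal{H}(\underline{\underline{\alpha}})$ by promoting the columns of $\Delta_0$ to the homogeneous variables $X_1,\dots,X_N$ and adjoining the Wronskian columns $\mathcal{C}_{s_1}(X_0),\dots,\mathcal{C}_M(X_0)$. The key step is then Theorem~\ref{main3}: by explicit column and row operations that exploit the Leibniz relation $X_j^{(l)}=\sum_q\binom{l}{q}x_{ij}^{(q)}X_i^{(l-q)}$, one shows that for \emph{every} chart index $j$,
\[
\det\Big(\tfrac{1}{X_j}\,\mathcal{H}(\underline{\underline{\alpha}})\Big)
\;=\;
\pm\,\det\Delta_j(\underline{\underline{\alpha}}),
\]
where $\Delta_j$ is a \emph{polynomial} matrix in the $U_j$ jet coordinates. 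This yields the exact change-of-chart identity $\det\Delta_0 = \pm\, x_{jj}^{-(M+1)}\det\Delta_j$, so the pole order in every chart is precisely $M(\underline{\underline{\alpha}})+1 \leqslant d$, and the $\mathcal{O}(d)$ twist makes the section global. The homogeneous matrix thus packages all the cancellations you were hunting for into a single determinant identity, proved by elementary manipulations rather than by an appeal to differential homogeneity.
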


We have thus at least found a free family of
$(N+1)^d$ global polynomial sections.

The second part consists in showing that there are no more polynomial
sections. Every determinant of our basis can be written in the
affine chart
$U_1 = \{X_1 \neq 0\}$ 
as a sum of rational terms that have the form:
\[
\frac{M}{x_{11}^s},
\]
where $M$ is a monomial in the other variables $x_{12}, \dots, x_{1N},
x_{11}', \dots$, and $s \geqslant 0$. Based on the following ordering on the variables:
\[
\aligned
&x_{1N}
\prec
x_{1N}'
\prec
x_{1N}''
\prec
\cdots \\
\cdots
\prec
&\,x_{1N-1}
\prec
x_{1N-1}'
\prec
\cdots \\
&\quad \quad \vdots \\
\cdots
\prec
&\,x_{11}
\prec
x_{11}'
\prec
\cdots,
\endaligned
\] 
let us define an ordering on
those rational terms.
To compare two rational terms $\frac{M}{x_{11}^s}$ and
$\frac{M'}{x_{11}^{s'}}$, we start by comparing $s$ and $s'$. If $s >
s'$, then $\frac{M}{x_{11}^s} \prec \frac{M'}{x_{11}^{s'}}$. If
$s=s'$, then we compare the biggest variables of $M'$ and $M$. The
term with the biggest variable is the biggest term. If the largest
variables coincide, we compare the second largest, and so on.

We establish our theorem thanks to the following
key observation: different
determinants $\det\,
\Delta_0(\underline{\underline{\alpha}})$ 
have different smallest terms in the chart $U_1$. 
Arguments and proofs 
appear in Section~{\ref{cas_unidimensionel}} 
in the simpler case of dimension $N = 1$ 
and in Section~{\ref{cas_general}} in any dimension $N \geqslant 1$.
There is a little complication when searching for the
smallest term after a change of chart in dimension $N \geqslant 2$.

The free family of global sections we have found is thus a basis.

\begin{Theorem}
For every $d \geqslant 1$ and
every $k \geqslant d-1$, the family:
\[
\Big\{
\det\,
\Delta_0
(\underline{\underline{\alpha}})
\colon\,\,
M(\underline{\underline{\alpha}}) \leqslant d-1 
\Big\},
\]
of cardinal $(N+1)^d$, is a basis of 
$H^0\big(\mathbb{P}^N,\, \oplus_{n=1}^{\infty} 
E_{k,n}^{\text{GG}} \otimes \mathcal{O}(d)\big)$.
\end{Theorem}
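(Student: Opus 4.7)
My plan builds directly on the earlier theorems of the excerpt. They already establish that the proposed family is linearly independent of cardinal $(N+1)^d$, and that the larger family $\{\det \Delta_0(\underline{\underline{\alpha}})\}_{\underline{\underline{\alpha}} \in \mathcal{B}^{N+}}$ is a basis of the polynomial algebra $E$ in which every global section is represented over the chart $U_0$. What remains is the matching upper bound $\dim H^0 \leqslant (N+1)^d$, equivalently: in the unique expansion $P = \sum_{\underline{\underline{\alpha}}} c_{\underline{\underline{\alpha}}} \det \Delta_0(\underline{\underline{\alpha}})$ of any $P \in E$ representing a global section, all coefficients indexed by $\underline{\underline{\alpha}}$ with $M(\underline{\underline{\alpha}}) \geqslant d$ must vanish.

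I would exploit the chart $U_1 = \{X_1 \neq 0\}$ via the substitution $x_{1j} = x_{0j}/x_{01}$ for $j \neq 1$ and $x_{11} = 1/x_{01}$. The defining section condition, combined with the twist by $\mathcal{O}(d)$, forces the total pole order in $x_{11}$ to stay at most $d$. Each $\det \Delta_0(\underline{\underline{\alpha}})$ rewrites in $U_1$ as a finite sum of rational terms $M/x_{11}^s$, and by the discussion preceding this theorem, its $\prec$-smallest such term has pole order exactly $M(\underline{\underline{\alpha}}) + 1$; in particular each $\det \Delta_0(\underline{\underline{\alpha}})$ with $M(\underline{\underline{\alpha}}) \geqslant d$ is individually incompatible with being a section.

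The central technical step, and the one I expect to be the main obstacle, is the injectivity of the map that sends $\underline{\underline{\alpha}}$ to the $\prec$-smallest term of $\det \Delta_0(\underline{\underline{\alpha}})$ in $U_1$. I would identify, for each $\underline{\underline{\alpha}}$, which permutation in the Laplace expansion, combined with which monomials arising in the chart-changed entries, produces the minimal term; compute its pole order $s = M(\underline{\underline{\alpha}}) + 1$ together with its accompanying numerator monomial; and verify distinctness as $\underline{\underline{\alpha}}$ varies. The case $N = 1$ is a single-block computation, cleanly handled in Section~\ref{cas_unidimensionel}; the case $N \geqslant 2$ (Section~\ref{cas_general}) is the delicate part, because the change of chart couples the blocks and one must track simultaneously the block index $i$ and the derivative orders of the surviving variables to rule out accidental coincidences between the leading terms coming from different $\underline{\underline{\alpha}}$.

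Granting this injectivity, I conclude by a standard minimal-term argument: if some $c_{\underline{\underline{\alpha}}}$ with $M(\underline{\underline{\alpha}}) \geqslant d$ were nonzero, then among such indices one selects $\underline{\underline{\alpha}}^{\ast}$ whose smallest term is $\prec$-minimal in the global expansion of $P$; injectivity forbids any cancellation from $\det \Delta_0(\underline{\underline{\beta}})$ with $\underline{\underline{\beta}} \neq \underline{\underline{\alpha}}^{\ast}$, so this term survives in the $U_1$-expansion of $P$ with a pole of order $\geqslant d+1$, contradicting the section condition. All high-$M$ coefficients therefore vanish, the proposed family spans, and combined with the already established free cardinality this delivers the asserted basis of $H^0\bigl(\mathbb{P}^N,\,\oplus_{n=1}^{\infty} E_{k,n}^{\text{GG}} \otimes \mathcal{O}(d)\bigr)$.
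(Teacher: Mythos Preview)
Your proposal is correct and follows essentially the same approach as the paper: expand an arbitrary section $P$ in the basis $\{\det\Delta_0(\underline{\underline{\alpha}})\}_{\underline{\underline{\alpha}}\in\mathcal{B}^{N+}}$, pass to the chart $U_1$, use the key lemma that distinct $\underline{\underline{\alpha}}$ produce distinct $\prec$-smallest rational terms (each with pole order exactly $M(\underline{\underline{\alpha}})+1$), and run a smallest-of-the-smallest argument to force $M(\underline{\underline{\alpha}})\leqslant d-1$ for every surviving index. One small clarification worth making explicit in your write-up: non-cancellation from the low-$M$ determinants ($M(\underline{\underline{\beta}})\leqslant d-1$) is not a consequence of the injectivity lemma but of the cruder fact that all their rational terms have pole order $\leqslant d$, hence are $\succ$ the smallest term of your $\underline{\underline{\alpha}}^{\ast}$ purely by the denominator comparison in the ordering.
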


The whole scheme of our arguments,
based on manipulations of concrete explicit determinants,
is rather different from the one
developed by Etesse in~{\cite{Etesse-2023},
based on (advanced) Representation Theory.

\bigskip\noindent
\textbf{Acknowledgements.} 
The first author would like to thank
the second one for guidance 
throughout this research.  
The second author would like to mention that, 
after his many mathematical experiments on Maple in small dimensions
shared for inspiration, 
it was the first author who was able to devise key arguments 
and clever proofs valid in general dimensions.

Both authors would also like to thank Paris-Saclay University,
and the Master 2 `{\sl Arithmétique, Algèbre, et Géométrie}' as well.

\section{The projective space}\label{the-projective-space}

In this section $N$ will denote the dimension of the projective space
that we will construct in the following way: on
$(\mathbb{C}^{N+1})^{*}$, we consider the equivalence relation $\sim$
such that:
\[
(X_0,X_1,\dots ,X_N) 
\sim 
(\lambda X_0,\lambda X_1,\dots , \lambda X_N) 
\]
for all $(X_0,X_1,\dots ,X_N) \in (\mathbb{C}^{N+1})^{*}$ and $\lambda
\in \mathbb{C}^*$.

The quotient space $\mathbb{P}^N(\mathbb{C}) = (\mathbb{C}^{N+1})^{*}
/ \sim$ is called the projective space of dimension $N$. It is a
topological space endowed with the quotient topology.

For a point $(X_0,X_1,\dots ,X_N)$ in $\mathbb{C}^{N+1}$, we will
denote its equivalence class in the projective space by
$[X_0\colon X_1 \colon \dots \colon X_N]$. 
Every point $x$ in $\mathbb{P}^N(\mathbb{C})$
can be represented in this way: $x = 
[X_0\colon X_1 \colon \dots \colon X_N]$. The
elements in the $(N+1)$-uple $(X_0,X_1,\dots ,X_N)$ are called the
{\sl homogeneous coordinates} of the point $x$.  For every $i =
0,1,\dots ,N$, the open subsets:
\[
U_i
= 
\Big\{ [X_0\colon X_1\colon \dots \colon
X_N] \in \mathbb{P}^N(\mathbb{C}) 
\colon\,\, 
X_i \neq 0 \Big\}.
\]
along with the maps:
\begin{center}
$\begin{array}{ccccl}
\phi_i & : & U_i & 
\to & \mathbb{C}^N 
\\
& & [X_0 \colon X_1 \colon \dots \colon X_N] & 
\mapsto & (\frac{X_0}{X_i},\dots ,\frac{X_N}{X_i}) 
\\
\end{array}$,
\end{center}
endow $\mathbb{P}^N(\mathbb{C})$ with a structure of an
$N$-dimensional manifold.

\begin{Example}
When $N=1$, a point $p$ with homogeneous coordinates 
$[X_0 \colon X_1]$,
where $X_0 \neq 0$ and $X_1 \neq 0$, can be read in the chart
$(U_0,\phi_0)$ and in the chart $(U_1,\phi_1)$.  We denote by $x$ the
affine coordinate of $p$ read in the chart $(U_0,\phi_0)$, i.e $x =
\phi_0(p)$. Then one easily sees that $\phi_1(p) = \frac{1}{x}$, so
that the change of charts is given by:
\begin{center}
$\begin{array}{ccccl}
\phi_{1,0} & : & \mathbb{C} & \to & \mathbb{C}
\\
& & x & \mapsto & 1/x 
\\
\end{array}.$
\end{center}
There are in $\mathbb{P}^1(\mathbb{C})$ two distinguished points,
namely $\mathbf{0} = [1:0]$ and $\mathbf{\infty} = [0:1]$. The first
one has $0$ as affine coordinate in the chart $(U_0,\phi_0)$, and
corresponds to the point at infinity in the chart$(U_1,\phi_1)$.
\end{Example}

In the general case, a point $p$ in 
$\mathbb{P}^N(\mathbb{C})$ has $N
+ 1$ homogeneous coordinates $[X_0
\colon X_1\colon \dots \colon X_N]$,
and on each open set $U_i = \{ X_i \neq 0\}$
for $i = 0, 1, \dots, N$, 
the point $p$ has an $N$-tuple of affine coordinates.
For $i \in \llbracket 0, N
\rrbracket$, the affine coordinates of the point $p$ in the chart
$U_i$ will be denoted $(x_{i1}, x_{i2}, \dots, x_{iN})$, where:
\[
\aligned
x_{ii} &= \frac{X_0}{X_i}, \\
x_{ij} &= \frac{X_j}{X_i}
\ \ \text{if} \ j \neq i.
\endaligned
\]

We can now, in terms of the affine coordinates, explicitly describe
the changes of charts. If $i \neq j$, then:
\[
\aligned
x_{ii}
&=
\frac{X_0}{X_i}
=
\frac{x_{jj}}{x_{ji}}, \\
x_{ij}
&=
\frac{X_j}{X_i}
=
\frac{1}{x_{ji}},
\endaligned
\]
and, for all $t \not\in \{i, j\}$,
\[
x_{it}
=
\frac{X_t}{X_i}
=
\frac{x_{jt}}{x_{ji}}.
\]

\subsection{The line bundle $\mathcal{O}(d)$}

Over each open set $U_i$ of the projective space, 
consider the trivial line bundle:
\begin{center}
\begin{tikzcd}
U_i \times \mathbb{C} \arrow[d, "\pi"] 
\\
U_i
\end{tikzcd}.
\end{center}
Now fix an integer $d \in \mathbb{N}$. We glue these trivial line
bundles together with the transition maps $G_{ji}
([X_0 \colon X_1 \colon \dots \colon X_N])
= \big( \frac{X_i}{X_j} \big)^d$, resulting in the following
commutative diagram:
\begin{center}
$\begin{tikzcd}[column sep=small]
(p,v) \in U_i \times\mathbb{C} \arrow{rr} & & \Big( 
p,\Big(\frac{X_i}{X_j}\Big)^d v\Big) \in  U_j \times\mathbb{C} \\
& p \in U_i \arrow[ul] \arrow[ur] &
\end{tikzcd}$
\end{center}
One can easily verify that the cocycle condition:
\begin{center}
$G_{ik}G_{kj}G_{ji} = 1$
\end{center}
is satisfied, so the transition maps correctly define a line bundle.

\subsection{The bundle of k-jets}

Let $k$ be an integer. Over each open set $U_i$ we consider
the trivial vector bundle of dimension $N \times k$:
\begin{center}
\begin{tikzcd}
U_i \times \mathbb{C}^{N \times k} \arrow[d, "\pi"] \\
U_i
\end{tikzcd}.
\end{center}
We will glue these trivializations together, but we will first need to
discuss how to interpret the elements in the fibers as $k$-jets. Let
$p$ be an abstract point on the projective space $\mathbb{P}^N$. This
point $p$ can be read in different charts. Suppose that $p \in U_i
\cap U_j$, and write its affine coordinates in the two local charts:
\[
\phi_i(p) 
= 
(x_1,\dots,x_N) \in \mathbb{C}^N, \quad
\phi_j(p) 
= 
(y_1,\dots,y_N) \in \mathbb{C}^N.
\]
Now, consider some holomorphic curve passing through $p$, i.e. a map
$\gamma$:
\[
\gamma : D(0,R) 
\longrightarrow 
\mathbb{P}^N,
\]
with $\gamma (0) = p$, and such that:
\[
\phi_i \circ \gamma: \mathbb{C} \longrightarrow \mathbb{C}^N
\]
is holomorphic.  We may consider the derivatives of $\phi_i \circ
\gamma$ and denote them adequately:
\begin{align*}
\Big( \phi_i \circ \gamma \Big)'
&= 
(x_1',x_2', \ldots, x_N'), \\
\Big( \phi_i \circ \gamma \Big)''
&= 
(x_1'',x_2'', \ldots, x_N''), \\
&\,\,\:
\vdots \\
\Big( \phi_i \circ \gamma \Big)^{(k)}
&= 
(x_1^{(k)},x_2^{(k)}, \ldots, x_N^{(k)}).
\end{align*}

The vector $(x_1', x_2', \dots, x_N^{(k)})$ is called a $k$-jet and is
in the fiber $\mathbb{C}^{N \times k}$.  We use the letter $x$ because
we chose to `read' the curve in the local chart $U_i$. Of course, we
could have done the same thing with the derivatives of $\phi_j \circ
\gamma$, which would have given us a different vector.
\begin{align*}
\Big( \phi_j \circ \gamma \Big)'
&= 
(y_1',y_2', \ldots, y_N'), \\
\Big( \phi_j \circ \gamma \Big)''
&= 
(y_1'',y_2'', \ldots, y_N''), \\
&\,\,\:\vdots \\
\Big( \phi_j \circ \gamma \Big)^{(k)}
&= 
(y_1^{(k)},y_2^{(k)}, \ldots, y_N^{(k)}).
\end{align*}

If $\Psi_t$ is the change-of-chart map:
\[
y_t = \Psi_t(x_1, \dots, x_N),
\]
then one can recover the $l$-jet $y^{(l)}_t$ by differentiating the
previous equation:
\[
y^{(l)}_t
=
\Psi_t \Big(x_1, \dots, x_N\Big)^{(l)}.
\]
Here, the derivatives are computed by considering $x_1, \dots, x_N$ as
holomorphic functions.

This construction can be summarized in the following diagram:
\[
\begin{tikzcd}
& (x_1', \ldots, x_N^{(k)}) \in \pi^{-1}(p) \ni (y_1', \ldots, y_N^{(k)})
\arrow[ddl]
\arrow[ddr]
\arrow[dd, "\pi"] & \\ \\
(x_1, \ldots, x_N) \in \mathbb{C}^N
& p \in \mathbb{P}^N \arrow[r, "\phi_i"] \arrow[l, "\phi_j"]
& \mathbb{C}^N \ni (y_1, \ldots, y_N) \\
& \mathbb{C} \arrow[u, "\gamma"]
\end{tikzcd} 
\]

\begin{Example}
Recall that in dimension $N=1$,
the change of charts is given by
\[
\phi_1 \circ \phi_0^{-1}: t \longmapsto \frac{1}{t}.
\]
Let $p$ be an abstract point in $\mathbb{P}^1$, and denote by $x$ its
affine coordinate in $(U_0, \phi_0)$ and by $y$ its affine coordinate
in $(U_1, \phi_1)$. Thus $y=1/x$.  Let $\gamma$ be a holomorphic
curve passing through $x$ and denote by $x', x'', \ldots, x^{(k)} \in
\mathbb{C}$ the derivatives of $\phi_0 \circ \gamma$:
\[
x' 
= 
\Big( \phi_0 \circ \gamma \Big)', \quad x'' 
= 
\Big( \phi_0 \circ \gamma \Big)'', \ \ldots, \ x^{(k)} 
= 
\Big( \phi_0 \circ \gamma \Big)^{(k)}.
\]
Similarly, we shall denote the derivatives of $\phi_1 \circ \gamma$ by $y', y'', \ldots, y^{(k)}$:
\[
y' 
= 
\Big( \phi_1 \circ \gamma \Big)', \quad y'' 
= 
\Big( \phi_1 \circ \gamma \Big)'', \ \ldots, \ y^{(k)} 
= 
\Big( \phi_1 \circ \gamma \Big)^{(k)}.
\]
Now, using the classic formula $(f \circ g)' = f'(g)g'$ with:
\begin{align*}
g &= x, \\
\Psi &= \phi_1 \circ \phi_0^{-1},
\end{align*}
one computes:
\begin{align*}
y'
&=
(\Psi \circ g)' \\
&=
\frac{1}{x^2}\Big(-x'\Big). \\ \\
y''
&=
(\Psi \circ g)'' \\
&=
\frac{2}{x^3}x'x' - \frac{1}{x^2}x''.
\end{align*}
\end{Example}

\begin{Example}
In the case $k=1$, 
the $k$-jets (which are now $1$-jets) are given by a simple
derivative. The transition map from $U_i$ to $U_j$
is:
\[
G_{ji}(p): 
\Big( \phi_i \circ \gamma \Big)'(0) 
\longmapsto 
\Big( \phi_j \circ \gamma \Big)'(0).
\]
The formula for the differential of a composition gives us:
\begin{center}
$\Big( \phi_j \circ \gamma \Big)'(0) = D_0(\phi_i \circ \phi_j^{-1})\Big(\phi_i \circ \gamma \Big)'(0)$.
\end{center}
In particular, we see that the transition maps are linear, hence the
bundle of $1$-jets is a vector bundle.
The vector bundle of $1$-jets is actually the tangent bundle.
\end{Example}

\subsection{The Green-Griffiths bundle}

In this section we will construct the \textit{Green-Griffiths} bundle,
which is, contrary to the bundle of $k$-jets, a vector bundle.

Let $p \in U_i \cap U_j$ a point in $\mathbb{P}^N$, that we might
express in its affine coordinates in the two charts:
\[
\phi_i(p) = (x_1, x_2, \dots, x_N) 
\ \ \ \text{and} \ \ \
\phi_j(p) = (y_1, y_2, \dots, y_N).
\]
For all $t \in \llbracket 1, N \rrbracket$, we have a change of charts
map:
\[
y_t 
=
\Psi_t(x_1, x_2, \dots, x_N),
\]
which one can differentiate 
to get the change of charts of the bundle of $k$-jets:
\[
y^{(l)}_t 
=
(\Psi_t(x_1, x_2, \dots, x_N))^{(l)}.
\]
We saw that for $l \geqslant 2$, this map fails to be linear.

In this section (and only in this section), we define the weight of a
monomial $x^{(a_1)}x^{(a_2)}\cdots x^{(a_n)}$ with $a_1, a_2, \dots,
a_n \geqslant 1$ to be $a_1 + a_2 + \cdots + a_n$.

For every $k \geqslant 1$ and $n \geqslant 1$, we define the fiber
over the trivializing set $U_i$ of the {\sl Green-Griffiths bundle of
order $k$ and degree $n$} of the projective space
$\mathbb{P}^N(\mathbb{C})$ to be the polynomial space generated by the
monomials of weight $n$ in $\mathbb{C}[x_1', \dots, x_N', x_1'',
\dots, x_N^{(k)}]$. An element in the fiber can thus be
written as:
\[
P
= 
\sum_{a_{11}+\cdots+a_{1N}+2a_{21}+\cdots+ka_{kN} = n}
c_{a_{11},\dots,a_{1N},a_{21},\ldots,a_{kN}}
(x_1')^{a_{11}}
\ldots
(x_N')^{a_{1N}}
(x_1'')^{a_{21}}
\ldots
(x_N^{(k)})^{a_{kN}}.
\]

\begin{Example}
We give the monomials of weight $n = 2, 3, 4$ 
for the one-dimensional projective space $\mathbb{P}^1(\mathbb{C})$:
\begin{itemize}

\item 
If $n = 2$, there are two such monomials, namely:
\[
x'x', \ \ \ \ \ \ x''.
\]
\item 
If $n = 3$, there are three such monomials, which are:
\[
x'x'x', \ \ \ \ \ \ x'x'', \ \ \ \ \ \ x'''.
\]
\item 
If $n = 4$, there are five such monomials:
\[
x'x'x'x', \ \ \ \ \ \ x'x'x'', \ \ \ \ \ \ 
x''x'', \ \ \ \ \ \ x'x''', \ \ \ \ \ \ x''''.
\]
\end{itemize}

\end{Example}

We shall now describe the changes of charts.  Let $U_j$ be some other
trivializing open set, in which the affine coordinates are denoted
$(y_1, \dots, y_N)$.  Let $i_1, \dots, i_s$ be some indices
in $\llbracket 1, N \rrbracket$, and $m_1, \dots, m_s$ some
integers with $m_1 + \cdots + m_s = n$. The monomial
$y_{i_1}^{(m_1)}\cdots y_{i_s}^{(m_s)}$ is an element
in the fiber over the set $U_j$ and the change of charts of the
Green-Griffiths bundle is given by:
\[
y_{i_1}^{(m_1)}\cdots y_{i_s}^{(m_s)}
=
(\Psi_{i_1}(x_1, \dots, x_N))^{(m_1)}
\dots
(\Psi_{i_s}(x_1, \dots, x_N))^{(m_s)}.
\]

The Green-Griffiths bundle of order $k$ and degree $n$ is denoted
$E^{\text{GG}}_{k, n}$.

\begin{Theorem}
{\rm \cite{Demailly-2015, Merker-2015}}
The Green-Griffiths bundle is a vector bundle.
\end{Theorem}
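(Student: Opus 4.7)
The plan is to verify that the substitution formulas
\[
y_{i_1}^{(m_1)} \cdots y_{i_s}^{(m_s)} \;=\; \bigl(\Psi_{i_1}(x_1,\ldots,x_N)\bigr)^{(m_1)} \cdots \bigl(\Psi_{i_s}(x_1,\ldots,x_N)\bigr)^{(m_s)},
\]
extended by linearity from monomials to all polynomials, assemble into linear isomorphisms $G_{ji}(p)$ between the fibers over $U_i \cap U_j$, satisfy the cocycle identity $G_{ik} = G_{ij} \circ G_{jk}$, and depend holomorphically on $p$. Since substitution of polynomials into polynomials is tautologically linear in the polynomial being substituted, and since the two fibers are finite-dimensional abstract vector spaces of the same dimension (each with a basis indexed by monomials of weight $n$ in derivatives of orders $\leqslant k$), the only real content is (i) that the substitution sends a weight-$n$ polynomial in the $y$-variables to a weight-$n$ polynomial in the $x$-variables, and (ii) that the resulting linear map is invertible.

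The heart of the argument is a weight-preservation lemma: when the weight of $x_i^{(l)}$ is declared to be $l$ and extended additively to monomials, each iterated derivative $\bigl(\Psi_t(x_1,\ldots,x_N)\bigr)^{(l)}$ expands, via the generalized chain rule (Fa\`a di Bruno), as a polynomial in the formal jet variables $x_i^{(l')}$ that is homogeneous of weight exactly $l$, with coefficients given by partial derivatives of $\Psi_t$ evaluated at $(x_1,\ldots,x_N)$. I would prove this by induction on $l$: the base case is $y_t' = \sum_j \partial_j \Psi_t \cdot x_j'$, of weight $1$; for the inductive step, differentiating a weight-$l$ monomial in the $x_i^{(l')}$ whose coefficient is a function of the base variables $(x_1,\ldots,x_N)$ either raises some derivative order by one (adding $+1$ to the weight of the monomial factor), or differentiates the coefficient, which by the chain rule introduces an extra factor $x_j'$ of weight $1$. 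Additivity of weight under multiplication then extends the statement to products, giving total weight $m_1 + \cdots + m_s = n$ for $\prod_{\ell} \bigl(\Psi_{i_\ell}\bigr)^{(m_\ell)}$, which proves (i).

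Invertibility and the cocycle condition both follow from the functoriality of substitution under composition of maps: the inverse change of chart $\Psi^{-1}$ produces an inverse substitution, so $G_{ij}(p) \circ G_{ji}(p) = \operatorname{id}$ and, more generally, $G_{ik}(p) = G_{ij}(p) \circ G_{jk}(p)$ on $U_i \cap U_j \cap U_k$. Holomorphic dependence on $p$ is then automatic, since the matrix entries of $G_{ji}(p)$, read off from the Fa\`a di Bruno expansions, are polynomial expressions in the partial derivatives of the $\Psi_{i_\ell}$ evaluated at $\phi_i(p)$, all of which depend holomorphically on $p$. The only step requiring genuine work is the weight-preservation lemma (i); all other verifications are formal consequences once that combinatorial fact is in hand.
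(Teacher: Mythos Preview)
The paper does not actually prove this theorem in-text; immediately after stating it, the authors write ``The proof of the linearity of the changes of charts can be found in~\cite{Demailly-2015, Merker-2015}.'' Your proposal is therefore a self-contained argument for a result the paper treats as background.

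Your argument is correct and is essentially the standard one in the cited references: the substitution map $P \mapsto P\bigl(\{(\Psi_t)^{(l)}\}\bigr)$ is automatically linear in the coefficients of $P$; the Fa\`a di Bruno expansion shows that each $(\Psi_t)^{(l)}$ is homogeneous of weight exactly $l$ in the jet variables $x_i^{(l')}$ with coefficients holomorphic in the base point, so products of such factors have additive weight and the substitution lands in the weight-$n$ fiber; and functoriality of jet prolongation under composition of biholomorphisms gives invertibility and the cocycle identity. There is nothing to compare, since the paper supplies no proof of its own.

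One small point worth making explicit: the Fa\`a di Bruno expansion of $(\Psi_t)^{(l)}$ involves only jet variables $x_i^{(l')}$ with $l' \leqslant l \leqslant k$, so the substitution indeed stays inside $E_{k,n}^{\text{GG}}$ and does not spill into higher-order jets. You implicitly assume this when you say both fibers are indexed by monomials in derivatives of order $\leqslant k$, but it is the companion fact to weight preservation and follows from the same induction.
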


The proof of the linearity of the changes of charts can be found 
in~\cite{Demailly-2015, Merker-2015}.

For every $k \geqslant 1$, we define the external direct sum:
\[
E^{\text{GG}}_{k, \infty} 
= 
\bigoplus_{n = 0}^{\infty} E^{\text{GG}}_{k,n}.
\]

A local holomorphic section, over some trivializing open $U_i$, can be
written as a convergent series:
\[
\sum 
c_{\centersmallbullet}\, 
x_1^{a_{01}}
\cdots 
x_N^{a_{0N}} (x_1')^{a_{11}} 
\cdots 
(x_N^{(k)})^{a_{kN}}.
\]
Every non-constant term of this series will yield, after a change of
charts, a pole in some variable $x_j$. To compensate these poles, so
that the local sections can extend holomorphically 
to global sections, we
consider the {\sl twisted Green-Griffiths bundles}:
\[
E^{\text{GG}}_{k, \infty}(d)
=
E^{\text{GG}}_{k, \infty} \otimes \mathcal{O}(d)
\ \ \ \forall d \in \mathbb{N}.
\]

If $(x_1, \dots, x_N)$ and $(y_1, \dots, y_N)$ are the
affine coordinates of some points in two distinct trivializing open
sets $U_i$ and $U_j$, and if $y^{(m_1)}_{t_1}\cdots
y^{(m_l)}_{t_l}$ is some element in the bundle $E^{\text{GG}}_{k,
\infty}(d)$, then the changes of charts can be written as:
\[
y^{(m_1)}_{t_1}\cdots y^{(m_l)}_{t_l}
=
x^d_j(\Psi_{t_1}(x_1, x_2, \dots, x_N))^{(m_1)}
\cdots
(\Psi_{t_l}(x_1, x_2, \dots, x_N))^{(m_l)}.
\]

\subsection{Global sections of the twisted Green-Griffiths bundle}

A holomorphic section of the twisted Green-Griffiths bundle $E_{k,
\infty}^{\text{GG}}(d)$ of degree $d$ can be written as a family of
local sections $(s_0, s_1, \dots, s_N)$ over the trivializing open
sets $U_0, U_1, \dots, U_N$.

For every $i = 0, 1, \dots, N$, the local section $s_i$ is in the
affine variables $x_{i1}, x_{i2}, \dots, x_{iN}$, and the local
sections satisfy the changes of charts conditions:
\[
s_i(x_{i1}, x_{i2}, \dots, x_{iN})
=
x_{ij}^d s_j(x_{j1}, x_{j2}, \dots, x_{jN})
\ \ \ \forall i, j.
\]

Furthermore, a local holomorphic section $s_i$ can be written as a
convergent series:
\[
s_i
=
\sum_{k_1 \leqslant k_2 \leqslant \cdots \leqslant k_l}
\sum_{a_1, a_2, \cdots, a_l}
H^{k_1, k_2, \dots, k_l}_{a_1, a_2, \dots, a_l} \
x_{ia_1}^{(k_1)}
x_{ia_2}^{(k_2)}
\cdots
x_{ia_l}^{(k_l)}.
\]

We will say that a global section $s$ is polynomial if the local
sections $s_0, s_1, \dots, s_N$ are all polynomials. The space of
polynomial sections will be denoted
$H^0_{\text{pol}}(\mathbb{P}^N(\mathbb{C}), E_{k,
\infty}^{\text{GG}}(d))$.

\begin{Remark}
It can be shown, although it will not be necessary here, that every
holomorphic section is in fact polynomial:
\[
H^0_{\text{pol}}\big(\mathbb{P}^N(\mathbb{C}), 
E_{k, \infty}^{\text{GG}}(d)\big)
=
H^0(\mathbb{P}^N\big(\mathbb{C}), E_{k, \infty}^{\text{GG}}(d)\big).
\]
\end{Remark}

\section{The one-dimensional case}\label{cas_unidimensionel}

In this section we will prove that the dimension of the space
$H^0\big( \mathbb{P}^1(\mathbb{C}), E^{\text{GG}}_{k,
\infty}(d) \big)$ is $2^d$ when $k \geqslant d-1$. We will work in
affine coordinates to describe local sections on the open $U_0$. The
natural basis for the local sections is the family of the monomials in
the variables $x, x', x'', \dots$. We will introduce another basis,
given by determinants, that will be more useful to identify the local
sections that extend holomorphically to global sections.

\subsection{The polynomial space of local sections}
\label{subsection-polynomial-space}

Let $\mathcal{A}$ be the set of sequences 
with a finite number of terms $(a_i)_{i \in \llbracket1, n\rrbracket}$
which are in $\mathbb{N}$ and weakly increasing: 
\[
0 \leqslant a_1 \leqslant a_2 \leqslant 
\dots 
\leqslant a_n.
\]
We will often denote the sequence $(a_i)_{i \in \llbracket1, n\rrbracket}$ by $\underline{a}$. 

Furthermore, let $\mathcal{B}$ be the set of sequences with a finite number of terms $(\alpha_i)_{i \in \llbracket1, n\rrbracket}$ which are in $\mathbb{N}$ and strictly increasing:
\[
0 \leqslant \alpha_1 < \alpha_2 <
\dots
< \alpha_n.
\]
Such a sequence $(\alpha_i)_{i \in \llbracket 1, n \rrbracket}$ 
shall be denoted $\underline{\alpha}$.
The sets $\mathcal{A}$ and $\mathcal{B}$ 
both contain the particular sequence $\emptyset$ having zero terms.

There is a natural correspondence between $\mathcal{A}$ 
and $\mathcal{B}$ given by the map:
\[
\begin{array}{ccccl}
\tau & : & \mathcal{B} & \to & 
\quad \quad \quad \quad \quad \quad 
\mathcal{A} 
\\
& & (\alpha_i)_{i \in \llbracket 1, n \rrbracket} & \mapsto & (\alpha_i - i + 1)_{i \in \llbracket 1, n \rrbracket} 
=:
(a_i)_{i \in \llbracket 1, n \rrbracket}
\\
\end{array}.
\]
\begin{Lemma}
The map $\tau$ is a bijection.
\end{Lemma}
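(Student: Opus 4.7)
The plan is to prove the lemma by exhibiting an explicit two-sided inverse, which is the most natural route since the formula defining $\tau$ is affine and trivially invertible term-by-term. Specifically, I will introduce the candidate inverse
\[
\sigma\colon\mathcal{A}\longrightarrow\mathcal{B},\qquad
(a_i)_{i\in\llbracket 1,n\rrbracket}
\longmapsto
(a_i + i - 1)_{i\in\llbracket 1,n\rrbracket},
\]
and show that $\tau$ and $\sigma$ map into the correct target sets, and that the two compositions are the identity.

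First I would verify that $\tau$ is well-defined, i.e.\ that $\tau(\underline{\alpha})\in\mathcal{A}$. Setting $a_i:=\alpha_i-i+1$, the inequality $a_1=\alpha_1\geqslant 0$ follows from $\alpha_1\geqslant 0$, and the weak monotonicity is immediate from
\[
a_{i+1}-a_i \,=\, (\alpha_{i+1}-i)-(\alpha_i-i+1) \,=\, \alpha_{i+1}-\alpha_i-1 \,\geqslant\, 0,
\]
using the key fact that the $\alpha_i$ are integers with $\alpha_{i+1}>\alpha_i$, so $\alpha_{i+1}-\alpha_i\geqslant 1$. Symmetrically, for $\sigma$, I would set $\alpha_i:=a_i+i-1$ and check $\alpha_1=a_1\geqslant 0$ and $\alpha_{i+1}-\alpha_i=a_{i+1}-a_i+1\geqslant 1>0$, which gives strict increase.

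Next I would check the two composition identities. For any $\underline{\alpha}\in\mathcal{B}$,
\[
(\sigma\circ\tau)(\underline{\alpha})_i \,=\, (\alpha_i-i+1)+i-1 \,=\, \alpha_i,
\]
and for any $\underline{a}\in\mathcal{A}$,
\[
(\tau\circ\sigma)(\underline{a})_i \,=\, (a_i+i-1)-i+1 \,=\, a_i.
\]
Finally I would note that both $\tau$ and $\sigma$ preserve the length $n$ of the sequence, and that both map the empty sequence $\emptyset$ to $\emptyset$, so the bijection respects the distinguished element in each set.

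I do not expect any real obstacle: the argument is a routine direct verification. The only subtlety worth flagging is the shift from strict to weak inequalities, which is handled cleanly by the integrality of the entries (a strict inequality between integers forces a gap of at least $1$, which is exactly what is absorbed by the subtraction of $i-1$). Once this is noted, the construction of $\sigma$ and the verification of the two identities complete the proof in a few lines.
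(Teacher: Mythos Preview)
Your proof is correct and takes essentially the same approach as the paper, which simply writes down the explicit inverse $(a_i)_{i\in\llbracket 1,n\rrbracket}\mapsto (a_i+i-1)_{i\in\llbracket 1,n\rrbracket}$. You are just more thorough in checking well-definedness and the two composition identities, which the paper leaves implicit.
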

\begin{proof}
We can explicitly give its inverse:
\[
\begin{array}{ccccl}
\tau^{-1} & : & \mathcal{A} & \to & 
\quad \quad \quad \quad \quad \quad 
\mathcal{B} 
\\
& & (a_i)_{i \in \llbracket 1, n \rrbracket} & \mapsto & (a_i + i - 1)_{i \in \llbracket 1, n \rrbracket} 
=:
(\alpha_i)_{i \in \llbracket 1, n \rrbracket}
\\
\end{array}.
\qedhere
\]
\end{proof}
Consider the space of polynomials:
\[
E 
= 
\mathbb{C}[x,x',x'',\dots].
\]
A polynomial in $E$ can be written as a linear combination of
monomials, and these monomials can themselves be written as:
\[
x^{(a_1)}x^{(a_2)}\cdots x^{(a_n)},
\]
with $a_1, \dots , a_n \in \mathbb{N}$ and, since the order of the
factors does not matter, we may as well suppose that:
\[
0 \leqslant a_1 \leqslant a_2 \leqslant \dots \leqslant a_n,
\]
so that we have a sequence in $\mathcal{A}$.
We will thus, for all sequences $\underline{a} \in \mathcal{A}$, 
use the following notation:
\[
x^{(\underline{a})} = x^{(a_1)}x^{(a_2)}\cdots x^{(a_n)},
\]
with the convention:
\[
x^{(\emptyset)}
=
1.
\]

\begin{Fact}
The family
\[
\{ x^{(\underline{a})}\}
_
{\underline{a} \in \mathcal{A}}
\]
is the canonical basis of $E$.
\end{Fact}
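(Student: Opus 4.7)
The plan is to reduce the statement to the standard fact that in the polynomial ring $\mathbb{C}[X_0, X_1, X_2, \ldots]$ in countably many commuting indeterminates, a canonical basis is given by the set of all monomials $X_0^{m_0} X_1^{m_1} X_2^{m_2} \cdots$ where $(m_i)_{i \geqslant 0}$ ranges over tuples of non-negative integers with finite support. Interpreting $x, x', x'', \ldots$ as the formal indeterminates $X_0, X_1, X_2, \ldots$, we have $E = \mathbb{C}[X_0, X_1, X_2, \ldots]$ by definition, so this standard fact immediately produces a canonical basis of $E$.

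The next step is to rewrite this standard basis in the indexing language of $\mathcal{A}$. Given a monomial $\prod_{i \geqslant 0} (x^{(i)})^{m_i}$ with $\sum_i m_i = n$, one lists each index $i$ exactly $m_i$ times in non-decreasing order, obtaining a sequence $\underline{a} = (a_1, \ldots, a_n)$ with $0 \leqslant a_1 \leqslant \cdots \leqslant a_n$, that is, an element of $\mathcal{A}$. Conversely, given $\underline{a} \in \mathcal{A}$, setting $m_i := \#\{ j : a_j = i\}$ recovers the exponents. Because multiplication in $E$ is commutative, one has the identity
\[
x^{(\underline{a})} \,=\, x^{(a_1)} x^{(a_2)} \cdots x^{(a_n)} \,=\, \prod_{i \geqslant 0} \bigl(x^{(i)}\bigr)^{m_i},
\]
so the maps $(m_i)_{i \geqslant 0} \leftrightarrow \underline{a}$ just described form a bijection between the exponent tuples and $\mathcal{A}$ that identifies the two families of monomials term by term. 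The convention $x^{(\emptyset)} = 1$ corresponds to the tuple $(m_i) = (0, 0, \ldots)$, namely the constant monomial.

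Consequently, $\{ x^{(\underline{a})}\}_{\underline{a} \in \mathcal{A}}$ is, up to the relabeling above, exactly the canonical monomial basis of the polynomial ring $\mathbb{C}[x, x', x'', \ldots]$, and hence it is a basis of $E$. There is no real obstacle here: the only point worth making explicit is that the ordering convention $a_1 \leqslant a_2 \leqslant \cdots \leqslant a_n$ is precisely what removes the redundancy coming from the commutativity of the variables $x^{(i)}$, so that distinct elements of $\mathcal{A}$ index distinct monomials.
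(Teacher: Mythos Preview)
Your proof is correct. The paper states this as a \emph{Fact} without any proof; it is treated as the standard observation that the monomials are the canonical basis of a polynomial ring, reindexed via $\mathcal{A}$. Your explicit bijection between weakly increasing sequences $\underline{a} \in \mathcal{A}$ and finitely supported exponent tuples $(m_i)_{i \geqslant 0}$ spells out exactly the tacit identification the paper is relying on, so you have simply made explicit what the authors left implicit.
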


\subsection{Weighting the monomials} \label{poids_monomes}

For some integer $a \in \mathbb{N}$, we say that the variable
$x^{(a)}$ has \emph{weight} $a+1$, because if $x$ is a function in
some variable $t$, then by using induction we may show that:
\begin{align*}
\Big(\frac{1}{x}\Big)'
&=
-\frac{x'}{x^2}
\\
\Big(\frac{1}{x}\Big)''
&=
\frac{2x'x' - xx''}{x^3}
\\
&\vdots \\
\Big(\frac{1}{x}\Big)^{(a)}
&=
\frac{\centersmallbullet}{x^{a+1}},
\end{align*}
where $\centersmallbullet$ is some polynomial in $E$.  It is the weight we have
to compensate when changing charts in the Green-Griffiths
bundle. Naturally, the weight of a product will be the sum of the
weights. We shall formalize this by defining the weight function on
the set $\mathcal{A}$:
\[
\begin{array}{ccccl}
w_{\mathcal{A}} & : & \mathcal{A} & \to & 
\quad \quad \quad \quad \quad 
\mathbb{N} 
\\
& & (a_i)_{i \in \llbracket 1, n \rrbracket} & 
\mapsto & (a_1 + 1) + (a_2 + 1) + \dots + (a_n + 1).
\end{array}
\]

We can also define a weight function on the set $\mathcal{B}$ by
saying that the weight of some element $\underline{\alpha}$ in
$\mathcal{B}$ is the weight of its alter ego
$\tau(\underline{\alpha})$ in $\mathcal{A}$:
\[
\begin{array}{cccccl}
w_{\mathcal{B}} & : & \mathcal{B} & \to & &
\quad \quad \quad 
\mathbb{N} 
\\
& & (\alpha_i)_{i \in \llbracket 1, n \rrbracket} & \mapsto &
w_{\mathcal{B}}(\underline{\alpha})
&:=
w_{\mathcal{A}}(\tau(\underline{\alpha})) \\
& & & & &= (\alpha_1 + 1) + \alpha_2 + \dots + (\alpha_n + 2 - n).
\end{array}
\]
For every integer $p \in \mathbb{N}$, 
we can now define the finite sets:
\[
\mathcal{A}_p = \Big\{ \underline{a} \in \mathcal{A} 
\colon\,\,
w_{\mathcal{A}}(\underline{a}) = p \Big\},
\]
and:
\[
\mathcal{B}_p 
= 
\Big\{ \underline{\alpha} 
\in 
\mathcal{B} 
\colon\,\, w_{\mathcal{B}}(\underline{\alpha}) 
= 
p 
\Big\}.
\]
These two sets have the same cardinality, for $\tau$ induces a
bijection between them.

Finally, we define the subspace of $E$ of polynomials of \emph{weight}
$p \in \mathbb{N}$:
\[
E_p = \text{Vect}\Big\{x^{(\underline{a})}\Big\}_{\underline{a} \in \mathcal{A}_p},
\]
which has a natural basis given by the monomials:
\[
\Big\{
x^{(\underline{a})}
\Big\}
_
{\underline{a} \in \mathcal{A}_p},
\]
so that one has the triple equality:
\begin{equation}
\label{dim-Ep-Card-Ap-Card-Bp}
\dim E_p
=
\text{Card}(\mathcal{A}_p)
=
\text{Card}(\mathcal{B}_p).
\end{equation}

\subsection{Ordering the monomials} \label{ordre_monomes}

In this section we will define an order on the monomials. We start by
defining the following order on the variables:
\[
1 \prec x \prec x' \prec x'' \prec \dots
\] 
which we extend to the monomials in the following way: let
$\underline{a}$ and $\underline{b}$ be two sequences in $\mathcal{A}$,
and consider the monomials:
\begin{align*}
x^{(\underline{a})} 
&=
x^{(a_1)}x^{(a_2)}x^{(a_3)}\dots x^{(a_n)} \\
x^{(\underline{b})} 
&=
x^{(b_1)}x^{(b_2)}x^{(b_3)}\dots x^{(b_m)},
\end{align*}
with $a_1 \leqslant a_2 \leqslant \dots \leqslant a_n$ and $b_1
\leqslant b_2 \leqslant \dots \leqslant b_m$. 

To compare them, we first look at their biggest derivative.

\begin{itemize}

\item If $a_n < b_m$, then
\[
x^{(\underline{a})} \prec x^{(\underline{b})}.
\]

\item If $a_n = b_m$, then we compare
\[
x^{(a_1)}x^{(a_2)}\dots x^{(a_{n-1})} 
\quad 
\text{and} \quad x^{(b_1)}x^{(b_2)}\dots x^{(b_{m-1})}.
\]
\end{itemize}

By induction, this defines a total ordering on the monomials, 
that can be represented by:
\begin{align*}
1 \prec x& \prec xx \prec xxx \prec \dots \\
\dots \prec x'& \prec x'x \prec x'xx \prec \dots \\
\dots \prec x'x'& \prec x'x'x \prec x'x'xx \prec \dots \\
 & \dots  \\
 & \dots  \\
\dots \prec x''& \prec x''x \prec x''xx \prec \dots \\
\dots \prec x''x'& \prec x''x'x \prec x''x'xx \prec \dots.
\end{align*}
Each polynomial in $E = \mathbb{C}[x,x',x'', \dots]$ has a minimal
monomial. In the next section, we will define polynomials in $E$ as
determinants of some matrices, and finding the minimal monomial can be
a difficult task.

However, if one condition on the matrix is fulfilled, the minimal
monomial of its determinant will be handed on a silver platter, which
is the diagonal.

\begin{Theorem} \label{minth}
Let $M$ be a matrix of size $k \times k$ such that, for all $i \in
\llbracket 1,k \rrbracket$ and $j \in \llbracket 1,k \rrbracket$, the
entry $m_{ij}$ on the $i^{\text{th}}$ line and $j^{\text{th}}$ column
is a multiple of a variable in $\{ x,x',x'',\dots \}$:
\[
m_{ij}
=
c_{ij}\chi_{ij}
\]
with $c_{ij} \in \mathbb{C}$ and $\chi_{ij} \in \{x, x', x'', \dots\}$. 

Suppose also that the diagonal terms are non zero:
\[
c_{ii}
\neq
0
\quad \quad \quad \quad \quad \quad 
\text{for all } i \in \llbracket 1,k \rrbracket,
\]
and that, if for some $i,j \in \llbracket 1,k \rrbracket$ one has
$c_{ij} \neq 0$, then every entry $m_{lm}$ which is on the upper right
part of $m_{ij}$ (that means $l \leqslant i$ and $m \geqslant j$)
satisfies:
\[
c_{lm} = 0
\]
or:
\[
\chi_{ij}
< 
\chi_{lm} 
\]
Then the lowest monomial of $\det(M)$, 
when developped as a polynomial in $E$, 
is the diagonal term:
\[
m_{11}m_{22}\cdots m_{kk}.
\]
\end{Theorem}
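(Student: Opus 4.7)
The plan is to expand $\det(M)$ by the Leibniz formula
\[
\det(M)
=
\sum_{\sigma \in \mathfrak{S}_k}
\mathrm{sgn}(\sigma)
\bigg(\prod_{i=1}^k c_{i,\sigma(i)}\bigg)
\prod_{i=1}^k \chi_{i,\sigma(i)},
\]
and to reduce the theorem to the following key claim: for every $\sigma \neq \mathrm{id}$ with $\prod_i c_{i,\sigma(i)} \neq 0$, one has $\prod_i \chi_{i,\sigma(i)} \succ \prod_i \chi_{i,i}$. Granted this claim, the diagonal monomial $\chi_{1,1}\cdots \chi_{k,k}$ appears in $\det(M)$ only from $\sigma = \mathrm{id}$, with coefficient $c_{1,1}\cdots c_{k,k} \neq 0$, and every other monomial of $\det(M)$ is strictly greater; so it is indeed the minimum.

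To prove the key claim, I first cancel the common factors $\chi_{i,i}$ corresponding to fixed points $F = \{i : \sigma(i) = i\}$. Since $\prec$ is a (standard) multiplicative monomial order, this reduces the problem to proving
\[
\prod_{i \in I} \chi_{i,\sigma(i)} \;\succ\; \prod_{i \in I} \chi_{i,i},
\]
where $I = \{1,\ldots,k\} \setminus F$ has cardinality at least $2$. Both products have $|I|$ single-variable factors, so by the lexicographic definition of $\prec$ it is enough to establish the single inequality $\max_{i \in I}\chi_{i,\sigma(i)} \succ \max_{i \in I}\chi_{i,i}$.

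Let $t^* \in I$ realise $\chi_{t^*,t^*} = \max_{i \in I}\chi_{i,i}$. By the hypothesis, $\chi_{i,j}$ is strictly $\prec$-increasing in the upper-right partial order on nonzero positions, so it suffices to exhibit some $i \in I$ with $i \leqslant t^*$ and $\sigma(i) \geqslant t^*$: such an $i$ will make $(i,\sigma(i))$ upper-right of $(t^*,t^*)$ and hence force $\chi_{i,\sigma(i)} \succ \chi_{t^*,t^*}$. Such an $i$ is produced by a short case analysis on the cycle $C$ of $\sigma$ through $t^*$: if $\sigma(t^*) > t^*$, take $i = t^*$; if $\sigma^{-1}(t^*) < t^*$, take $i = \sigma^{-1}(t^*)$; otherwise $\sigma(t^*) < t^*$ and $\sigma^{-1}(t^*) > t^*$, so $C$ meets both $L = \{1,\ldots,t^*\}$ and $H = \{t^*+1,\ldots,k\}$, and since $\sigma|_C$ is a single cycle it must send at least one element of $L$ into $H$.

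The main obstacle is precisely this last case. A tempting alternative---reducing $\sigma$ to the identity by a sequence of transpositions, each one strictly decreasing the monomial---fails because the intermediate permutations can have a vanishing coefficient $\prod c = 0$, at which point the hypothesis gives no control on the corresponding $\chi$'s. The max-comparison strategy above bypasses this: the cycle-crossing observation provides, in every configuration, a single entry $(i,\sigma(i))$ strictly upper-right of the diagonal argmax $(t^*,t^*)$, and multiplicativity of $\prec$ then takes care of the rest.
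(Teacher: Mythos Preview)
Your proof is correct and follows essentially the same approach as the paper: both expand by Leibniz, fix a non-identity $\sigma$ with nonzero product, select the non-fixed index $t^*$ maximizing the diagonal variable, and then locate a single entry $(i,\sigma(i))$ weakly upper-right of $(t^*,t^*)$ (and $\neq (t^*,t^*)$) to force a strictly larger factor. The only difference is in how that entry is produced: the paper uses a one-line pigeonhole count (the $k-t^*+1$ entries in columns $\geqslant t^*$ cannot all fit in the $k-t^*$ rows below row $t^*$), whereas you argue via the cycle of $\sigma$ through $t^*$ crossing from $\{1,\dots,t^*\}$ to $\{t^*+1,\dots,k\}$; both are valid and prove the same combinatorial fact.
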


\begin{Example}
Consider the matrix
\[
M
=
\begin{pmatrix}
x' & x'' & x''' \\
x & 2x' & 3x'' \\
0 & x & 3x'
\end{pmatrix},
\]
which satisfies the conditions of Theorem \ref{minth}. If we develop its determinant:
\[
\det M
=
6x'x'x' - 3xx'x'' - 3xx'x'' + xxx''',
\]
the smallest monomial is $6x'x'x'$, which is the diagonal term.
\end{Example}

\begin{proof}
Consider a permutation $\sigma \in \mathfrak{S}_k$ different from the
identity, and suppose that its associated monomial $M^{\sigma} = m_{1
\sigma(1)}m_{2 \sigma(2)} \cdots m_{k \sigma(k)}$ is nonzero. There
exists at least one integer $t \in \llbracket 1, k \rrbracket$ such
that $\sigma (t) \neq t$. Amongst all these integers, consider the one
such that $m_{t t}$ is the greatest.

Now, for this integer $t$, consider the rectangle submatrix of $M$
consisting of the entries that are on the top right of $m_{t t}$:
\begin{center}
$\begin{pmatrix}
m_{11} &
&
&
&
&
&
\\
&
m_{22} &
&
&
&
&
\\
&
&
\ddots &
&
&
&
\\
&
&
&
m_{t t} &
&
&
\\
&
&
&
&
\ddots &
&
\\
&
&
&
&
&
&
\end{pmatrix}$
\begin{tikzpicture}[overlay,remember picture]
\draw[color=red] (-2.5,-.3) -- (-2.5,1.5);
\draw[color=red] (-2.5,1.5) -- (-.5,1.5);
\draw[color=red] (-.5,1.5) -- (-.5,-.3);
\draw[color=red] (-.5,-.3) -- (-2.5,-.3);
\end{tikzpicture}
\end{center}

Since the part under this submatrix has more columns than rows, one
element of:
\[
m_{1 \sigma(1)}, m_{2 \sigma(2)}, \dots, m_{k, \sigma(k)}
\]
must be in the submatrix, and by the hypothesis on $t$, 
this element is
not $m_{t t}$.  This element is, by the assumptions we made on the
matrix $M$, strictly greater than $m_{t t}$. The monomial $m_{1
\sigma(1)} m_{2 \sigma(2)} \cdots m_{k \sigma(k)}$ is therefore
greater than the diagonal monomial $M^{Id}$.

We deduce that $M^{Id}$ is the smallest monomial, and since it is only
reached once for $\sigma = Id$, it does not vanish in the sum:
\[
\det(M) = \sum_{\sigma}(-1)^{\epsilon(\sigma)}M^{\sigma}.
\qedhere
\]
\end{proof}

\subsection{The determinant sections}

We know that, over the open set $U_0$, a holomorphic section of the
Green-Griffiths bundle can be represented as a polynomial in the space
$E = \mathbb{C}[x,x',x'',\dots]$.  The natural basis of this space is
given by the monomials:
\[
\{x^{(\underline{a})}\}_{\underline{a} \in \mathcal{A}}.
\]
In this section, we introduce a new basis whose elements are given by
determinants.

\begin{Definition}
If $\alpha$ is an integer and $\chi$ some abstract variable (which
might be, depending on the context, an affine 
coordinate variable $x$ or some
homogeneous coordinate variable $X_i$), 
we define the column of type $\alpha$ in
the variable $\chi$ to be:
\[
\mathcal{C}_{\alpha}(\chi) = \begin{pmatrix}
\chi^{(\alpha)} \\
\binom{\alpha}{1} \chi^{(\alpha - 1)} \\
\vdots \\
\binom{\alpha}{\alpha} \chi^{(0)} \\
0 \\
\vdots
\end{pmatrix},
\]
where the length of the column will depend on the context.
\end{Definition}

We shall use such columns to define square matrices as follows.

\begin{Definition} 
\label{sectionslocales}
For every sequence $\underline{\alpha} = (\alpha_1,\dots ,\alpha_n)$
in $\mathcal{B}$, define the matrix $\Delta_0(\underline{\alpha})$
to be the matrix with columns:
\[
\Delta_0(\underline{\alpha})
:=
\Big(
\mathcal{C}_{\alpha_1}(x) 
\ \Big\vert \
 \mathcal{C}_{\alpha_2}(x)
\ \Big\vert \
 \dots 
\ \Big\vert \
 \mathcal{C}_{\alpha_n}(x)
\Big).
\]

Here, the length of the columns 
is chosen so that the resulting matrix
is square, hence all columns have length $n$.
\end{Definition}

The subscript ${}_0$ 
in the notation $\Delta_0(\underline{\alpha})$ 
is there to indicate that we are describing a section 
over the open set $U_0$. 

By taking determinants of such a matrices, we get an elements in
$E$. The obtained polynomials 
have the following very important
property.

\begin{Theorem}
\label{th_base}
The family
\[
\Big\{
\det(\Delta_0(\underline{\alpha}))
\Big\}
_{\underline{\alpha}\in \mathcal{B}}
\]
is a basis of $E$.
\end{Theorem}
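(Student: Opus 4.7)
The plan is to exploit the weight decomposition $E = \bigoplus_p E_p$ from subsection \ref{poids_monomes} and prove the basis statement within each weight slice, then assemble the pieces. The key observation is that $\tau: \mathcal{B} \to \mathcal{A}$ is a bijection preserving weights ($w_\mathcal{A}(\tau(\underline{\alpha})) = w_\mathcal{B}(\underline{\alpha})$), and the canonical basis $\{x^{(\underline{a})}\}_{\underline{a}\in\mathcal{A}}$ is already indexed by $\mathcal{A}$, so a dimension count via \eqref{dim-Ep-Card-Ap-Card-Bp} will kick in.

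First, I would verify that $\det\,\Delta_0(\underline{\alpha})$ is weight-homogeneous of weight $w_\mathcal{B}(\underline{\alpha})$. The $(i,j)$ entry, when nonzero, equals $\binom{\alpha_j}{i-1}\,x^{(\alpha_j-i+1)}$, which carries weight $\alpha_j - i + 2$. Summing along any permutation $\sigma$ gives
\[
\sum_{i=1}^n(\alpha_{\sigma(i)} - i + 2)
=
\sum_{i=1}^n\alpha_i + 2n - \tfrac{n(n+1)}{2}
=
w_\mathcal{B}(\underline{\alpha}),
\]
so every monomial appearing in the expansion lies in $E_{w_\mathcal{B}(\underline{\alpha})}$.

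Next, I would apply Theorem~\ref{minth} to identify the smallest monomial of $\det\,\Delta_0(\underline{\alpha})$. For $i \leqslant \alpha_j + 1$, write $m_{ij} = c_{ij}\chi_{ij}$ with $c_{ij} = \binom{\alpha_j}{i-1}$ and $\chi_{ij} = x^{(\alpha_j - i + 1)}$. Since $\alpha_1<\alpha_2<\cdots<\alpha_n$ forces $\alpha_i \geqslant i-1$, all diagonal coefficients $c_{ii} = \binom{\alpha_i}{i-1}$ are strictly positive. For the upper-right hypothesis, take $(l,m) \neq (i,j)$ with $l \leqslant i$ and $m \geqslant j$: one checks $\chi_{ij} \prec \chi_{lm}$ iff $\alpha_m - \alpha_j > l - i$. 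If $m>j$, strict increase of the $\alpha$'s gives $\alpha_m - \alpha_j \geqslant m-j \geqslant 1 > l-i$. If $m=j$ and $l<i$, the inequality $0 > l-i$ is immediate. So the hypotheses of Theorem~\ref{minth} are satisfied, and the minimal monomial equals the diagonal product
\[
\prod_{i=1}^n \binom{\alpha_i}{i-1}\,x^{(\alpha_i - i + 1)}
=
\Big(\prod_{i=1}^n\binom{\alpha_i}{i-1}\Big)\,x^{(\tau(\underline{\alpha}))},
\]
a nonzero scalar multiple of the canonical basis element indexed by $\tau(\underline{\alpha})$. This step is the technical heart of the argument: it is where strict increase of $\underline{\alpha}$ becomes essential.

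Finally, I would deduce linear independence within each $E_p$. Fix $p$, and suppose $\sum_{\underline{\alpha} \in \mathcal{B}_p} c_{\underline{\alpha}}\det\,\Delta_0(\underline{\alpha}) = 0$ with some $c_{\underline{\alpha}}\neq 0$. Among all $\underline{\alpha}$ with $c_{\underline{\alpha}}\neq 0$, pick $\underline{\alpha}_0$ minimizing $\tau(\underline{\alpha})$ in the monomial order (the $\tau$-images are distinct because $\tau$ is a bijection). Every other $\det\,\Delta_0(\underline{\alpha})$ has its smallest monomial $x^{(\tau(\underline{\alpha}))}$ strictly greater than $x^{(\tau(\underline{\alpha}_0))}$, so $x^{(\tau(\underline{\alpha}_0))}$ does not appear in those expansions at all. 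Its coefficient in the vanishing sum is therefore $c_{\underline{\alpha}_0}\prod\binom{\alpha_{0,i}}{i-1}\neq 0$, a contradiction. Hence the $|\mathcal{B}_p|$ determinants are linearly independent in $E_p$, and by \eqref{dim-Ep-Card-Ap-Card-Bp} they form a basis of $E_p$. Summing over $p$ via $E = \bigoplus_p E_p$ yields the desired basis of $E$.
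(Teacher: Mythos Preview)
Your proof is correct and follows essentially the same approach as the paper's: establish weight-homogeneity, apply Theorem~\ref{minth} to identify the diagonal term $x^{(\tau(\underline{\alpha}))}$ as the minimal monomial, deduce linear independence via a smallest-monomial argument, and conclude by the dimension count~\eqref{dim-Ep-Card-Ap-Card-Bp}. The only differences are organizational: you swap the order of the two lemmas, carry out the freeness argument inside each $E_p$ rather than globally in $E$, and you spell out in more detail the verification of the upper-right hypothesis of Theorem~\ref{minth} (which the paper leaves implicit).
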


\begin{proof}
We will prove this theorem with the help of two 
preliminary lemmas.

\begin{Lemma}
The smallest monomial appearing in the determinant
$\det\, \Delta_0(\underline{\alpha})$ is the
diagonal one:
\[
c\,x^{(\tau(\underline{\alpha}))} 
= 
c\,x^{(\alpha_1)}x^{(\alpha_2-1)}\cdots x^{(\alpha_n-n+1)}
\]
where $c$ is some non zero constant.
\end{Lemma}

This justifies the introduction of $\tau ( \underline{\alpha} )$
in Subsection~{\ref{subsection-polynomial-space}}.

\begin{proof}
The matrix $\Delta_0(\underline{\alpha})$ satisfies the conditions of
Theorem~{\ref{minth}}.  Thus the lowest monomial is the product of
diagonal terms. The $j^{\text{th}}$ column is
$\mathcal{C}_{\alpha_j} (x)$, hence
its $j^{\text{th}}$ entry is:
\[
\binom{\alpha_j}{j-1}x^{(\alpha_j-j+1)}.
\]
Thus the diagonal monomial of
of $\det\, \Delta_0(\underline{\alpha})$ is:
\[
x^{(\alpha_1)}
x^{(\alpha_2 - 1)}
\cdots
x^{(\alpha_n - n + 1)}
=
x^{(\tau(\underline{\alpha}))}
\]
up to some nonzero constant.
\end{proof}

The weight of this diagonal monomial is:
\[
\alpha_1+1
+
\alpha_2
+\cdots+
\alpha_n-n+2
=
w_{\mathcal{B}} (\underline{\alpha}).
\]

\begin{Lemma}
All monomials of $\det\, \Delta_0(\underline{\alpha})$ have weight
$w_{\mathcal{B}}(\underline{\alpha})$:
\[
\det(\Delta_0(\underline{\alpha})) 
\in 
E_{w_{\mathcal{B}}(\underline{\alpha})}.
\]
\end{Lemma}

\begin{proof}
Consider the entry in the $i^{\text{th}}$ line and
$j^{\text{th}}$ column:
\[
m_{ij}
=
\binom{\alpha_j}{i-1}x^{(\alpha_j-i+1)}.
\]
We can write the determinant of $\Delta_0(\underline{\alpha})$ as:
\[
\det(\Delta_0(\underline{\alpha})) 
= 
\sum_{\sigma \in \mathfrak{S}_n}(-1)^{\epsilon(\sigma)}
\prod_{j=1}^n 
\binom{\alpha_j}{\sigma(j) - 1}
x^{(\alpha_j-\sigma (j)+1)}.
\]
For some fixed permutation $\sigma$, the sum gives us, up to some
multiplicative constant, the monomial:
\[
x^{(\alpha_1 - \sigma (1) + 1)}
x^{(\alpha_2 - \sigma (2) + 1)}
\cdots
x^{(\alpha_n - \sigma (n) + 1)},
\]
which has weight:
\begin{align*}
\sum_{k=1}^n 
\big(
\alpha_k - \sigma (k) + 2
\big) 
&= 
\sum_{k=1}^n 
\big(\alpha_k - k + 2\big)
+
\sum_{k=1}^n 
\big(k - \sigma (k)\big) \\
&=
w_{\mathcal{B}}(\underline{\alpha})
+
0.
\qedhere
\end{align*}
\end{proof}

We are now ready to prove our theorem.

In order to prove that the family 
$\big\{ \det\, \Delta_0 (\underline{ \alpha} ) \big\}_{
\underline{\alpha} \in \mathcal{B}}$ 
is a basis for $E$, we shall start by
proving it is free. To do that, 
with $s \geqslant 1$, 
let $\underline{\alpha}^1,
\dots, \underline{\alpha}^s$ be sequences in
$\mathcal{B}$, and let $\lambda_1, \dots , \lambda_s
\in \mathbb{C}^\ast$ be non-zero
coefficients. We must show that the linear
combination:
\begin{equation} \label{lincomb}
\lambda_1 \det\,\Delta_0(\underline{\alpha}^1)
+ \lambda_2 \det\,\Delta_0(\underline{\alpha}^2)
+ \dots 
+ \lambda_s \det\,\Delta_0(\underline{\alpha}^s)
\end{equation}
does not vanish. 

To see this, for $i = 1, \dots, s$, 
let us expand each determinant as a sum of its 
(nonzero) monomials:
\[
\aligned
\det(\Delta_0(\underline{\alpha}^i))
&
=
c_1^i x^{(\underline{a}_1^i)}
+ c_2^i x^{(\underline{a}_2^i)}
+ \dots
+ c_{k_i}^i x^{(\underline{a}_{k_i}^i)}
\\
&
=
c_i\,x^{(\tau(\underline{\alpha}^i))}
+\cdots.
\endaligned
\]
Now, amongst these monomials, there is the lowest one
with $c_i \neq 0$:
\[
c_i x^{(\tau(\underline{\alpha}^i))}
\eqno
{\scriptstyle{(1\,\leqslant\,i\,\leqslant\,s)}}.
\]
And in the midst of the smallest monomials, one might find the
smallest of the smallest:
\[
c_m x^{(\tau(\underline{\alpha}^m))},
\]
for some integer in $\llbracket 1,s \rrbracket$. 

The monomial 
$\lambda_m c_m x^{(\tau(\underline{\alpha}^m))}$
appears only once in the linear combination~{\ref{lincomb}},
so that~{\ref{lincomb}} does not vanish, indeed.

Lastly, we need to show that 
$\big\{ \det\, \Delta_0 (\underline{ \alpha} ) \big\}_{
\underline{\alpha} \in \mathcal{B}}$
is a generating family. Recall that we
have the direct sum decomposition:
\[ 
E 
=  
\bigoplus_{p \in \mathbb{N}} E_p. 
\]
Let $p \geqslant 0$ be some integer. We know that the family:
\[
\Big\{
\det(\Delta_0(\underline{\alpha}))
\Big\}
_{\underline{\alpha} \in \mathcal{B}_p}
\]
is a free family of $\text{Card}(\mathcal{B}_p)$ elements of
$E_p$. However, 
according to~{\ref{dim-Ep-Card-Ap-Card-Bp}},
the dimension of $E_p$ is precisely
$\text{Card}(\mathcal{B}_p)$, so that this free family is also a
generating family.

We deduce that the family generates every subspace $E_p$, thus it
generates the whole space $E$, and it is a basis.
\end{proof}

\begin{Example}
The natural basis of $E_5$ is:
\[
x'''' 
\ \ \ , \ \ \ 
xx''' 
\ \ \ , \ \ \ 
x'x'' 
\ \ \ , \ \ \ 
xxx'' 
\ \ \ , \ \ \ 
xx'x' 
\ \ \ , \ \ \ 
xxxx' 
\ \ \ \text{and} \ \ \ 
xxxxx,
\]
and the basis given by the determinants 
$\Delta_0(\underline{\alpha})$ is:
\begin{align*}
&\det \begin{pmatrix}
x''''
\end{pmatrix}
=
x''''
&\det \begin{pmatrix}
x & x'''' \\
0 & 4x'''
\end{pmatrix}
=
4xx''' \quad \quad \quad \quad \quad \ 
\\
&\det \begin{pmatrix}
x' & x''' \\
x & 3x''
\end{pmatrix}
=
3x'x'' - xx'''
&\det \begin{pmatrix}
x & x' & x'''' \\
0 & x & 4x'''\\
0 & 0 & 6x''
\end{pmatrix}
=
6xxx'' \quad \quad \ \ \
\\
&\det \begin{pmatrix}
x & x'' & x''' \\
0 & 2x' & 3x''\\
0 & x & 3x'
\end{pmatrix}
=
6xx'x' - 3xxx''
&\det \begin{pmatrix}
x & x' & x'' & x''''\\
0 & x & 2x' & 4x'''\\
0 & 0 & x & 6x'' \\
0 & 0 & 0 & 4x'
\end{pmatrix}
=
4xxxx'
\\
&\det \begin{pmatrix}
x & x' & x'' & x''' & x'''' \\
0 & x & 2x' & 3x'' & 4x''' \\
0 & 0 & x & 3x' & 6x'' \\
0 & 0 & 0 & x & 4x' \\
0 & 0 & 0 & 0 & x
\end{pmatrix}
=
xxxxx
\end{align*}
\end{Example}

\subsection{The Two Changes of Affine Charts on $\mathbb{P}^1
(\mathbb{C})$}

The affine coordinate of a point in the open $U_1 = \{X_1 \neq 0\}$ 
will be denoted
$y := \frac{X_0}{X_1}$. 
Hence, a local polynomial section of the Green-Griffiths bundle
can be represented as an element in:
\[
E_1 
= 
\mathbb{C}[y,y',y'',\dots].
\]
Recall the relations between the affine variables $x$ and $y$ and the
homogeneous variables $X_0$ and $X_1$:
\begin{align*}
x &= \frac{X_1}{X_0}, \\
y &= \frac{X_0}{X_1}, \\
x &= \frac{1}{y},
\end{align*}
from which one can deduce the relations for the derivatives:
\begin{align*}
x'  &= -\frac{y'}{y^2}, \\
x'' &= \frac{2y'y' - y''y}{y^3},
\end{align*}
and, more generally:
\[
x^{(n)} = \frac{\centersmallbullet}{y^{n+1}},
\]
where $\centersmallbullet$ designates some polynomial in $E_1$. 

Now, if $\underline{a}$ is some sequence in $\mathcal{A}$, then the
monomial $x^{(\underline{a})}$ can be written in the chart $U_1$ as:
\[
x^{(\underline{a})} 
= 
\frac{\centersmallbullet}{y^{w_{\mathcal{A}}(\underline{a})}}.
\]
The right-hand side has a pole at $y=0$, thus the local section
$x^{(\underline{a})}$ does not extend to a global holomorphic
section on $\mathbb{P}^1 (\mathbb{C})$. 
However, if $d \geqslant w_{\mathcal{A}}(\underline{a})$,
then the change of charts in the 
{\em twisted} bundle $E^{\text{GG}} \otimes 
\mathcal{O} (d) = E^{\text{GG}} (d)$ is
given by:
\[
x^{(\underline{a})} \longrightarrow y^{d}
\frac{\centersmallbullet}{y^{w_{\mathcal{A}}(\underline{a})}},
\]
the pole is therefore fully compensated. For some fixed integer $d \in
\mathbb{N}$, we know which monomials are in the space
${H}^0 (\mathbb{P}^1(\mathbb{C}),E_{k,
\infty}^{\text{GG}}(d))$ of global polynomial sections.

\begin{Fact}
For every $d \in \mathbb{N}$ and $k \geqslant d-1$,
\begin{equation}
\label{inclusion_naive}
\text{\rm Vect}
\big(
\{x^{(\underline{a})}\}
_{w_{\underline{\mathcal{A}}}(a) \leqslant d}
\big)
\,\subset\,
H^0
\big(
\mathbb{P}^1(\mathbb{C}),E^{\text{GG}}_{k, \infty}(d)
\big).
\end{equation}
\end{Fact}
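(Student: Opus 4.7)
The plan is to verify that each basis monomial $x^{(\underline{a})}$ with weight $w_{\mathcal{A}}(\underline{a}) \leqslant d$ extends to a global polynomial section of the twisted bundle, and then to invoke linearity. Since $\mathbb{P}^1(\mathbb{C}) = U_0 \cup U_1$ and the section is already polynomial on $U_0$ by construction, it suffices to check holomorphic extension across $U_1$.

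First, I would observe that the hypothesis $k \geqslant d-1$ is exactly what is needed to ensure that $x^{(\underline{a})}$ actually lies in the $k$-jet bundle. For $\underline{a} = (a_1, \ldots, a_n) \in \mathcal{A}$ with $w_{\mathcal{A}}(\underline{a}) \leqslant d$, the largest derivative order appearing is $a_n$, and the crude bound
\[
a_n + 1 \leqslant \sum_{i=1}^n (a_i + 1) = w_{\mathcal{A}}(\underline{a}) \leqslant d
\]
gives $a_n \leqslant d - 1 \leqslant k$. Hence $x^{(\underline{a})} \in \mathbb{C}[x, x', \ldots, x^{(k)}]$, and it defines a local section of $E_{k, \infty}^{\text{GG}}$ over $U_0$.

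Next, I would promote the already-displayed identity $x^{(m)} = \tfrac{\centersmallbullet}{y^{m+1}}$ to a bona fide inductive statement: there exists a polynomial $P_m \in \mathbb{C}[y, y', \ldots, y^{(m)}]$ such that $x^{(m)} = P_m / y^{m+1}$. The base case $x = 1/y$ is given, and the induction step follows by differentiating $x^{(m)} y^{m+1} = P_m$ with respect to the curve parameter and solving for $x^{(m+1)}$. Multiplying the resulting identities for the indices in $\underline{a}$ yields
\[
x^{(\underline{a})} = \frac{Q(y, y', y'', \ldots)}{y^{w_{\mathcal{A}}(\underline{a})}},
\]
where $Q$ is a polynomial in the $y$-jet variables whose derivative orders are all $\leqslant a_n \leqslant k$.

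Finally, I would apply the change-of-chart rule on $E_{k, \infty}^{\text{GG}} \otimes \mathcal{O}(d)$, which multiplies the transformed expression by $y^d$. This gives the $U_1$-representation
\[
y^d \cdot \frac{Q}{y^{w_{\mathcal{A}}(\underline{a})}} = y^{d - w_{\mathcal{A}}(\underline{a})} \, Q,
\]
which is a polynomial in the $y$-jet variables precisely because $d - w_{\mathcal{A}}(\underline{a}) \geqslant 0$. Hence $x^{(\underline{a})}$ extends across $U_1$ as a polynomial section of $E_{k, \infty}^{\text{GG}}(d)$, and by linearity the inclusion~\eqref{inclusion_naive} follows.

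The argument above is essentially a bookkeeping of definitions, so there is no genuine obstacle here: the combinatorial constraint $w_{\mathcal{A}}(\underline{a}) \leqslant d$ is tailored so that the twist exactly swallows the pole produced by the change of charts. The real difficulty, which motivates the entire determinantal machinery introduced afterwards, is the \emph{reverse} inclusion: showing that the naive inclusion~\eqref{inclusion_naive} is strict for $d \geqslant 2$, and identifying the missing sections as the Reinhart--Etesse determinants $\det \Delta_0(\underline{\alpha})$.
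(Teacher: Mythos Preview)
Your proof is correct and follows essentially the same approach as the paper: the paper's justification is the discussion immediately preceding the Fact, which observes that $x^{(\underline{a})} = \centersmallbullet / y^{w_{\mathcal{A}}(\underline{a})}$ in the chart $U_1$ and that the twist by $y^d$ compensates the pole when $w_{\mathcal{A}}(\underline{a}) \leqslant d$. Your write-up is more careful in that it explicitly verifies the constraint $a_n \leqslant k$ (ensuring the monomial lives in the order-$k$ jet space) and spells out the induction behind the $\centersmallbullet$-formula, but the underlying argument is identical.
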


Did we find all of them? The answer unfortunately is no. 
For example, consider the polynomial:
\[
2x'x' - xx''.
\]
Both of its monomials have weight $4$, so it is not in the space on
the left-hand side of~\eqref{inclusion_naive} for $d=3$. However, if
we apply the change of charts in $E_{k, \infty}^{\text{GG}}(3)$, we
see that the poles of order $4$ disappear:
\begin{align*}
2x'x' - xx'' \quad
&\longmapsto
\quad y^3 \Big(2\frac{(-y')}{y^2}\frac{(-y')}{y^2} 
- 
\frac{1}{y} \ \ \frac{2y'y' - yy''}{y^3}\Big) \\
&\ \ \ \ \ \ =
y^3\Big(\frac{yy''}{y^4}\Big) \\
&\ \ \ \ \ \ =
y''.
\end{align*}
Global sections of the twisted bundle $E^{\text{GG}}(d)$ are thus
sometimes linear combinations of monomials with weights larger than
$d$. In that case the poles of order greater than $d$ compensate each
other.

These linear combinations are in fact the ones of the determinants of
the $\det\, \Delta_0(\underline{\alpha})$ for $\underline{\alpha} \in
\mathcal{B}$. For instance:
\begin{align*}
\det\,\Delta_0((1,2))
&=
\begin{vmatrix}
x' & x'' \\
x  & 2x'
\end{vmatrix} \\
&=
2x'x' - xx''
\end{align*}
We will now discuss how the
$\det\, \Delta_0(\underline{\alpha})$ behave 
through changes of affine charts.

\begin{Definition}
For every $\underline{\alpha} \in \mathcal{B}$, let
$\mathcal{H}(\underline{\alpha})$ be the matrix:
\[
\mathcal{H}(\underline{\alpha})
=
\Big(
\mathcal{C}_{\alpha_1}(X_1) 
\ \Big\vert \
\mathcal{C}_{\alpha_2}(X_1) 
\ \Big\vert \ 
\cdots 
\ \Big\vert \
\mathcal{C}_{\alpha_n}(X_1)
\ \Big\vert \
\mathcal{C}_{n}(X_0)
\ \Big\vert \
\mathcal{C}_{n+1}(X_0)
\ \Big\vert \
\cdots
\ \Big\vert \
\mathcal{C}_{\alpha_n}(X_0)
\Big),
\]
where the columns have length $\alpha_n + 1$, so that the matrix
$\mathcal{H}(\underline{\alpha})$ is a square matrix.
\end{Definition}

We can dehomogenize this matrix in two ways:
\[
\frac{1}{X_0}\mathcal{H}(\underline{\alpha})
\ \ \ \text{and} \ \ \
\frac{1}{X_1}\mathcal{H}(\underline{\alpha}).
\]
The next theorem is a particular case of Theorem \ref{main3} and will be
useful to compute the determinant of $\frac{1}{X_i}
\mathcal{H}(\underline{\alpha})$.
It shows that these two dehomogenized determinants
depend only on $x$ and only on $y$.

\begin{Theorem} \label{algo}
For $i \in \{1, 0\}$, the determinant
\[
\det
\Big(
\frac{1}{X_i}
\mathcal{H}(\underline{\alpha})
\Big)
\]
can be computed, up to a $-1$ factor, by the following algorithm:

\begin{itemize}

\item In $\mathcal{H}(\underline{\alpha})$, remove every line and column containing $X_i^{(0)}$.

\item In the new matrix, replace $X_1^{(t)}$ by $x^{(t)}$ and $X_0^{(t)}$ by $y^{(t)}$ for all $t \in \mathbb{N}$. 

Denote these two new matrices by $\Delta_i(\underline{\alpha})$,
and more precisely: 
\[
\Delta_0(\underline{\alpha})(x),
\ \ \ \ \ \ \ \ \ \ \ \ \ \ \ \ \ \ \ \
\Delta_1(\underline{\alpha})(y).
\]
This
notation enters into conflict with the one in
definition~\ref{sectionslocales}, but the resulting matrices will in
fact coincide, so that there is no ambiguity.

\item compute the new determinant. 

\end{itemize}

\end{Theorem}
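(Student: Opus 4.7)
The plan is to realize the claim as a direct consequence of a clean matrix factorization of $\mathcal{H}(\underline{\alpha})$. I describe the case $i = 0$; the case $i = 1$ is handled by the symmetric argument exchanging the roles of $X_0$ and $X_1$.

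Starting from the affine relation $X_1 = X_0 \cdot x$ on the chart $U_0$, Leibniz's rule gives $X_1^{(m)} = \sum_{j} \binom{m}{j} X_0^{(j)} x^{(m-j)}$. Substituting into the definition of $\mathcal{C}_\alpha(X_1)$ and using the binomial identity $\binom{\alpha}{i-1}\binom{\alpha-i+1}{j} = \binom{\alpha}{i+j-1}\binom{i+j-1}{i-1}$, I reindex by $k = i + j$ to obtain
$$\mathcal{C}_\alpha(X_1) \,=\, B \cdot \mathcal{C}_\alpha(x),$$
where $B$ is the $(\alpha_n + 1) \times (\alpha_n + 1)$ upper-triangular matrix whose $k$-th column is precisely $\mathcal{C}_{k-1}(X_0)$.

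Two features of $B$ drive the argument. First, $B$ is upper-triangular with diagonal $(X_0, \dots, X_0)$, so $\det B = X_0^{\alpha_n + 1}$. Second, its last $\alpha_n + 1 - n$ columns are exactly the $X_0$-columns $\mathcal{C}_n(X_0), \dots, \mathcal{C}_{\alpha_n}(X_0)$ of $\mathcal{H}(\underline{\alpha})$. Putting these together yields the factorization $\mathcal{H}(\underline{\alpha}) = B \cdot M$, where $M$ has columns $\mathcal{C}_{\alpha_1}(x), \dots, \mathcal{C}_{\alpha_n}(x), e_{n+1}, e_{n+2}, \dots, e_{\alpha_n + 1}$, the last $\alpha_n + 1 - n$ being standard basis vectors. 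Iterated Laplace expansion of $\det M$ along these standard-basis columns removes exactly the rows and columns indexed by $\{n+1, \dots, \alpha_n + 1\}$, leaving the $n \times n$ upper-left submatrix of $(\mathcal{C}_{\alpha_1}(x) \mid \dots \mid \mathcal{C}_{\alpha_n}(x))$, which is precisely $\Delta_0(\underline{\alpha})(x)$ from Definition~\ref{sectionslocales}.

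Dividing the columns of $\mathcal{H}(\underline{\alpha})$ by $X_0$ then cancels the factor $X_0^{\alpha_n + 1}$ coming from $\det B$, so $\det(\tfrac{1}{X_0}\mathcal{H}(\underline{\alpha})) = \pm \det \Delta_0(\underline{\alpha})(x)$. The case $i = 1$ proceeds identically with $X_0 = X_1 \cdot y$ in place of $X_1 = X_0 \cdot x$: now it is the $X_1$-columns that become standard basis vectors in the analogous factorization $\mathcal{H}(\underline{\alpha}) = \tilde{B} \cdot \tilde{M}$ (since $\mathcal{C}_{\alpha_j}(X_1)$ is literally the $(\alpha_j+1)$-th column of $\tilde{B}$), while the $X_0$-columns become $\mathcal{C}_k(y)$ in $\tilde{M}$, and Laplace expansion produces $\Delta_1(\underline{\alpha})(y)$. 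The only delicate point in the whole argument is the sign bookkeeping from the iterated cofactor expansion, especially in the $i = 1$ case where the picked rows $\alpha_1 + 1, \dots, \alpha_n + 1$ need not be contiguous; this is precisely what the ``up to a $-1$ factor'' in the statement absorbs.
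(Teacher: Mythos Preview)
Your proof is correct and takes a genuinely different route from the paper's. The paper proves this statement only in the general $N$-dimensional setting (Theorem~\ref{main3}), by performing explicit column operations on $\mathcal{H}(\underline{\alpha})$\,\,---\,\,subtracting suitable $x_{ij}^{(\gamma-\beta_t)}$-multiples of the $X_i$-columns from each $X_j$-column\,\,---\,\,followed by an inductive sequence of row operations (from the bottom row upward) to clear the remaining cross-terms; only then does the determinant reduce to that of the complementary minor. Your argument packages all of this into a single factorization $\mathcal{H}(\underline{\alpha}) = B\cdot M$, where $B$ is the upper-triangular matrix with columns $\mathcal{C}_0(X_i),\mathcal{C}_1(X_i),\dots,\mathcal{C}_{\alpha_n}(X_i)$: the Leibniz identity you isolate says precisely that $\mathcal{C}_\alpha(X_j)=B\cdot\mathcal{C}_\alpha(x_{ij})$ for every $j\neq i$, while the $X_i$-columns of $\mathcal{H}$ are literally columns of $B$ and hence become standard basis vectors in $M$. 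This is cleaner and avoids the paper's inductive bookkeeping entirely; note also that it generalizes verbatim to the $N$-dimensional Theorem~\ref{main3}, since the same matrix $B$ works simultaneously for all $X_j$-columns with $j\neq i$. The paper's approach, by contrast, makes the elementary operations visible one at a time, which is longer but perhaps more transparent for a reader checking by hand. One small remark: in the $i=0$ case your block structure actually gives $\det M=\det\Delta_0(\underline{\alpha})$ with \emph{no} sign, so the $\pm 1$ is only needed for $i=1$, exactly as you note.
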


We prove this theorem in the general case 
in Section~\ref{changement_cartes}.

\begin{Example}
Put $\underline{\alpha} = (1,2)$. 
The homogeneous matrix associated to $\underline{\alpha}$ is:
\[
\mathcal{H}(\underline{\alpha})
=
\begin{pmatrix}
X_1' & X_1'' & X_0'' \\
X_1  & 2X_1' & 2X_0' \\
0 & X_1 & X_0
\end{pmatrix}.
\]
Thus, by the theorem:
\begin{align*}
\det
\Big(
\frac{1}{X_0}\mathcal{H}(\underline{\alpha})
\Big) 
&=
\begin{vmatrix} 
x' & x'' \\
x  & 2x'
\end{vmatrix}
\\
&=
2x'x' - xx'',
\end{align*}
and
\begin{align*}
\det
\Big(
\frac{1}{X_1}\mathcal{H}(\underline{\alpha})
\Big) 
&=
\begin{vmatrix} 
y''
\end{vmatrix}
\\
&=
y''.
\end{align*}
\end{Example}

For some element $\underline{\alpha}$ in $\mathcal{B}$,
we can now write $\Delta_0(\underline{\alpha})$ in the chart $U_1$:
\begin{align*}
\det\big(\Delta_0(\underline{\alpha})(x)\big)
&= 
\pm \det
\Big(
\frac{1}{X_0}
\mathcal{H}(\underline{\alpha})
\Big) 
\\
&=
\pm 
\Big(
\frac{X_1}{X_0}
\Big)^{\alpha_n + 1}
\det
\Big(
\frac{1}{X_1}
\mathcal{H}(\underline{\alpha})
\Big) 
\\
[\text{\rm Theorem~{\ref{algo}}}]
\ \ \ \ \ \ \ \ \ \ \ \ \ \ \ \ \ \ \ \ \ \ \ \ \ \
&=
\pm
\frac{\det\big(\Delta_1(\underline{\alpha})(y)\big)}{y^{\alpha_n + 1}}.
\end{align*}

From this change of charts formula one deduces the

\begin{Corollary}
If $\underline{\alpha} = (\alpha_1, \dots ,\alpha_n)$ is a sequence of
$\mathcal{B}$ such that $\alpha_n < d$, then
$\det(\Delta_0(\underline{\alpha}))$ is a global section of the
twisted bundle $E_{k,\infty}^{\text{GG}}(d)$.
\end{Corollary}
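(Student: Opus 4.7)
The plan is to take the change of charts formula derived immediately above the Corollary as the crux of the proof, and simply read off from it the local expression of the section on $U_1$. More precisely, I would define the candidate global section $s$ by its pair of local representatives $(s_0, s_1)$, with $s_0 := \det(\Delta_0(\underline{\alpha})(x))$ on $U_0$, and $s_1$ on $U_1$ determined by the gluing relation $s_0(x) = x^d\, s_1(y)$ that defines the twisted bundle $E_{k,\infty}^{\text{GG}}(d)$.

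Using the formula just established, namely
\[
\det(\Delta_0(\underline{\alpha})(x))
=
\pm\,\frac{\det(\Delta_1(\underline{\alpha})(y))}{y^{\alpha_n+1}},
\]
and substituting $x = 1/y$, I would solve for $s_1(y)$, finding that it equals (up to a sign) $y^{d-\alpha_n-1}\,\det(\Delta_1(\underline{\alpha})(y))$. Since $\det(\Delta_1(\underline{\alpha})(y))$ is a polynomial in the jet variables $y, y', y'', \dots$ by the construction provided in Theorem~\ref{algo}, and the hypothesis $\alpha_n < d$ ensures the exponent $d-\alpha_n-1 \geqslant 0$, this $s_1$ is genuinely a polynomial on $U_1$, hence holomorphic there. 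The section $s_0$ is manifestly polynomial on $U_0$ by definition.

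Thus both $s_0$ and $s_1$ are polynomials satisfying the required gluing relation, and together they define a global polynomial section of $E_{k,\infty}^{\text{GG}}(d)$. The main obstacle has actually already been dispatched in deriving the change of charts formula (which itself rests on Theorem~\ref{algo}); the present proof is an immediate consequence, with the inequality $\alpha_n < d$ appearing naturally as the sharp condition ensuring that the pole of order $\alpha_n+1$ at $y=0$ is fully compensated by the twist $\mathcal{O}(d)$.
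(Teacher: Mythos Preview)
Your proposal is correct and follows exactly the paper's approach: the paper simply states that the Corollary is deduced ``from this change of charts formula'' without further elaboration, and you have spelled out the immediate verification that the pole of order $\alpha_n+1$ is compensated by the factor $y^d$ precisely when $\alpha_n < d$, leaving a polynomial local section on $U_1$.
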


\begin{Notation}
Denote the subset of sequences $(\alpha_1, \alpha_2,
\dots , \alpha_n) \in \mathcal{B}$ 
such that $\alpha_n < d$ by $\mathcal{B}^d$.
\end{Notation}

\begin{Corollary}
\label{coco2}
For every $d \in \mathbb{N}$, and for every $k \geqslant d-1$, 
one has:
\[
\Big\{
\det(\Delta_0(\underline{\alpha}))
\Big\}
_{\underline{\alpha} \in \mathcal{B}^d}
\subset
H^0
\Big(
\mathbb{P}^1(\mathbb{C}),
E^{\text{GG}}_{k, \infty}(d)
\Big),
\]
so that one has the following lower bound:
\[
\dim
\Big(
H^0
\Big(
\mathbb{P}^1(\mathbb{C}),
E^{\text{GG}}_{k, \infty}(d)
\Big)
\Big)
\geqslant
2^d.
\]
\end{Corollary}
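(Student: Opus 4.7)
The inclusion is essentially a restatement of the preceding corollary: for any $\underline{\alpha} \in \mathcal{B}^d$, by definition $\alpha_n < d$, and the preceding corollary already asserts that under this hypothesis $\det(\Delta_0(\underline{\alpha}))$ extends to a global section of $E_{k,\infty}^{\text{GG}}(d)$ whenever $k \geqslant d-1$. So the first displayed inclusion follows by simply ranging over all such $\underline{\alpha}$.

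For the lower bound on the dimension, my plan is to combine two ingredients. First, by Theorem~\ref{th_base}, the full family $\{\det(\Delta_0(\underline{\alpha}))\}_{\underline{\alpha} \in \mathcal{B}}$ is a basis of $E = \mathbb{C}[x,x',x'',\ldots]$. Consequently any sub-family indexed by a subset of $\mathcal{B}$ is linearly independent in $E$, and a fortiori linearly independent as global sections once interpreted inside $H^0(\mathbb{P}^1(\mathbb{C}), E_{k,\infty}^{\text{GG}}(d))$. In particular, the sub-family indexed by $\mathcal{B}^d$ is a free family of global sections, so
\[
\dim H^0\bigl(\mathbb{P}^1(\mathbb{C}),\, E_{k,\infty}^{\text{GG}}(d)\bigr) \,\geqslant\, \operatorname{Card}\bigl(\mathcal{B}^d\bigr).
\]

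It remains only to verify that $\operatorname{Card}(\mathcal{B}^d) = 2^d$. Recall that $\mathcal{B}^d$ consists of all finite strictly increasing sequences $0 \leqslant \alpha_1 < \alpha_2 < \cdots < \alpha_n < d$ (including the empty sequence $\emptyset$, which lies in $\mathcal{B}$ by convention). Sending such a sequence to the set $\{\alpha_1, \ldots, \alpha_n\}$ gives a bijection between $\mathcal{B}^d$ and the power set of $\{0, 1, \ldots, d-1\}$, which has exactly $2^d$ elements. The empty sequence corresponds to the empty subset, i.e.\ to the constant section $1$. Combining with the linear independence above yields the announced lower bound.

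The only step requiring real work was Theorem~\ref{th_base} (already established) and the previous corollary on pole compensation (whose proof relies on Theorem~\ref{algo}); the present corollary is a clean packaging of those results together with the trivial subset-counting $2^d$. The only mild subtlety is remembering to include the empty sequence $\emptyset$ in the enumeration, which corresponds to the global constant section and is indispensable for the count to reach $2^d$ rather than $2^d - 1$.
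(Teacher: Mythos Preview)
Your proof is correct and matches the paper's intended argument; the paper in fact states this corollary without proof, treating it as an immediate consequence of the preceding corollary together with Theorem~\ref{th_base} and the obvious bijection between $\mathcal{B}^d$ and the power set of $\{0,1,\dots,d-1\}$. Your write-up simply spells out these implicit steps.
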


\subsection{The non-vanishing poles}\label{non_vanishing_pole}

The last step is to show that the lower bound of Corollary~\ref{coco2}
is an equality. Recall that a local section 
of the Green-Griffiths bundle
on the open $U_1$ is given
by a polynomial in the space:
\[
E_1 
= 
\mathbb{C}[y,y',y'', \dots].
\]
To describe monomials in this space, we use the notation:
\[
y^{(\underline{a})} 
=
y^{(a_1)}y^{(a_2)} \cdots y^{(a_n)}
\]
for every sequence $\underline{a} = (a_1, \dots , a_n)$ in
$\mathcal{A}$.

We shall order the monomials of this space in the same way we ordered
the monomials in $E = \mathbb{C}[x,x',x'', \dots]$:
\begin{align*}
1 \prec y& \prec yy \prec yyy \prec \cdots \\
\dots \prec y'& \prec y'y \prec y'yy \prec \cdots \\
\dots \prec y'y'& \prec y'y'y \prec y'y'yy \prec \cdots \\
 & \dots  \\
 & \dots  \\
\dots \prec y''& \prec y''y \prec y''yy \prec \cdots \\
\dots \prec y''y'& \prec y''y'y \prec y''y'yy \prec \dots.
\end{align*}

Let $\underline{\alpha}$ be an element in $\mathcal{B}$. The
determinant of $\Delta_0(\underline{\alpha})$ is a local section of
the Green-Griffiths bundle on the open $U_0$. The change of charts is
given by:
\[
\det (
\Delta_0(\underline{\alpha})
)
=
\frac{1}{y^{\alpha_n + 1}}\det (
\Delta_1(\underline{\alpha})
).
\]
If we develop the determinant on the right-hand side, we get a sum of
rational terms. Each one of those terms can be written in a unique way as a
reduced fraction:
\[
\frac{y^{(\underline{a})}}{y^s},
\]
where $s \geqslant 0$ and $\underline{a} \in \mathcal{A}$ is a
sequence such that $a_1 \geqslant 1$.

On those reduced fractions, we will define an order. If $s, s'$ are
two integers, and $\underline{a}, \underline{b}$ are in $\mathcal{A}$
with $a_1 \geqslant 1$, $b_1 \geqslant 1$, then we say that:
\[
\frac{y^{(\underline{a})}}{y^s}
\prec
\frac{y^{(\underline{b})}}{y^{s'}}
\]
if and only if:
\[
s > s'
\]
or
\[
s = s' 
\ \ \ \text{and} \ \ \
y^{(\underline{a})} \prec y^{(\underline{b})}.
\]

Our goal is now to find the lowest rational term in:
\[
\frac{1}{y^{\alpha_n + 1}}
\Delta_1(\underline{\alpha}).
\]
By definition, $\Delta_1(\underline{\alpha})$ is the submatrix of
$\mathcal{H}(\underline{\alpha})$ where all lines and columns
containing $X_1$ have been removed, and where $X_0$ has been replaced
by $y$.

We have thus removed the $n$ first columns, and the lines $\alpha_1 +
1, \alpha_2 + 1, \dots, \alpha_N + 1$.  Let $L_t$ be the $t$-th line
of $\Delta_1(\underline{\alpha})$. We may write $L_t$ for the first
values of $t$:
\begin{align*}
L_1  &= \big(\quad \quad \quad y^{(n)} 
\quad \quad \quad , \quad \quad \quad 
y^{(n + 1)} 
\quad \quad \quad , \quad \dots \quad , 
\quad y^{(\alpha_n)}   
\big), \\
L_2  &= \big(\quad \quad \quad ny^{(n-1)} 
\quad \quad , \quad \quad 
(n+1)y^{(n)} 
\quad \ , \quad \dots \quad ,
\alpha_ny^{(\alpha_n - 1)} 
\big), \\
&\,\,\:\vdots \\
L_{\alpha_1} &= \Big(
{\textstyle{\binom{n}{\alpha_1-1}}}
y^{(n-\alpha_1+1)},
{\textstyle{\binom{n+1}{\alpha_1-1}}}
y^{(n-\alpha_1+2)},
\dots,
{\textstyle{\binom{\alpha_n}{\alpha_1-1}}}
y^{(\alpha_n-\alpha_1+1)} \Big), \\
L_{\alpha_1 + 1} &= \Big(
{\textstyle{\binom{n}{\alpha_1+1}}}
y^{(n-\alpha_1-1)},
{\textstyle{\binom{n+1}{\alpha_1+1}}}
y^{(n-\alpha_1)},
\dots,
{\textstyle{\binom{\alpha_n}{\alpha_1+1}}}
y^{(\alpha_n-\alpha_1-1)} \Big).
\end{align*}
Let us introduce some notations. We will say that a line is of type
$\alpha$ if it has the form:
\[
\big(
c_{\alpha} y^{(\alpha)}, c_{\alpha + 1} y^{(\alpha + 1)}, 
\dots
\big),
\]
where the complex coefficients $c_{r}$ are zero if and only if $r <
0$. For every integer $t$, we can give the type of line $L_t$:

\begin{itemize}

\item If $1 \leqslant t \leqslant \alpha_1$, 
\quad \quad \quad \ \ \ then $L_t$ is of type $n-t+1$. 

\item If $\alpha_1 + 1 \leqslant t \leqslant \alpha_2 - 1$, then $L_t$ is of type $n-t$.

\item If $\alpha_2 \leqslant t \leqslant \alpha_3 - 2$, 
\quad \quad then $L_t$ is of type $n-t-1$.

\item $\cdots$

\item If $\alpha_{n-1} -n+3 \leqslant t \leqslant \alpha_n - n+1$, then $L_t$ is of type $-t+2$. 
\end{itemize}

By definition, the $k$-th entry of a line of type $\alpha$ is, up to
some multiplicative constant, $y^{(\alpha + k - 1)}$. Now that we have
the types of the lines of $\Delta_1(\underline{\alpha})$, we can take
a look at the diagonal terms, which are the $t$-th entry on the $t$-th
line, for every $t \leqslant \alpha_n - n + 1$.

\begin{itemize}

\item If $1 \leqslant t \leqslant \alpha_1$, the $t$-th entry of the $t$-th line is $y^{(n)}$. 

\item If $\alpha_1 + 1 \leqslant t \leqslant \alpha_2 - 1$, the $t$-th entry of the $t$-th line is $y^{(n-1)}$. 

\item If $\alpha_2 \leqslant t \leqslant \alpha_3 - 2$, the $t$-th entry of the $t$-th line is $y^{(n-2)}$. 

\item $\cdots$ 

\item If $\alpha_{n-1} - n + 3 \leqslant t \leqslant \alpha_n - n + 1$, the $t$-th entry of the $t$-th line is $y^{(1)}$.

\end{itemize}

We have thus acces to the diagonal term of the determinant
$\det\, \Delta_1(\underline{\alpha})$.

\begin{Fact}
The product of the diagonal terms in $\Delta_1(\underline{\alpha})$
is, up to some multiplicative non-zero scalar:
\[
{(y')}^{\alpha_n - \alpha_{n-1} - 1}{(y'')}^{\alpha_{n-1} 
- \alpha_{n-2} - 1}
\dots
{(y^{(n-1)})}^{\alpha_2 - \alpha_1 - 1}
{(y^{(n)})}^{\alpha_1}.
\]
\end{Fact}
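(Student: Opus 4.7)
The plan is straightforward: since the preceding case analysis has already identified, row by row, the diagonal entries of $\Delta_1(\underline{\alpha})$, I would simply group them according to the $y$-derivative order that appears, count multiplicities, and verify that the accumulated binomial scalar does not vanish.

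First, I would recast the row-type stratification in a uniform way. Setting $\alpha_0 := -1$ by convention, one checks that the row $L_t$ is of type $n - t - j + 1$ exactly when $t$ lies in the interval $\llbracket \alpha_j - j + 2,\, \alpha_{j+1} - j \rrbracket$, for some $j \in \{0, 1, \dots, n-1\}$. Plugging $k = t$ into the rule that the $k$-th entry of a line of type $\alpha$ is proportional to $y^{(\alpha + k - 1)}$, the diagonal entry at position $(t,t)$ becomes a nonzero binomial multiple of $y^{(n-j)}$, in perfect agreement with the bullet list of diagonal terms already given.

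Next, I would count the number of integers in each such interval, namely $\alpha_{j+1} - \alpha_j - 1$; since $\underline{\alpha}$ is strictly increasing, this count is $\geqslant 0$, with empty intervals contributing only trivial factors. Multiplying the diagonal entries over $j = 0, 1, \dots, n-1$ then yields
\[
\prod_{j=0}^{n-1} \big(y^{(n-j)}\big)^{\alpha_{j+1} - \alpha_j - 1}
\,=\,
(y^{(n)})^{\alpha_1}\,(y^{(n-1)})^{\alpha_2 - \alpha_1 - 1}\,\cdots\,(y')^{\alpha_n - \alpha_{n-1} - 1},
\]
up to a global scalar, which matches the claimed expression once one notes that with $\alpha_0 = -1$ the $j=0$ exponent equals $\alpha_1$.

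There is no substantive obstacle: the only verification still needed is that the accumulated global scalar is nonzero. But each diagonal binomial coefficient arising above has its lower parameter lying in the valid range $[0,\,\text{upper parameter}]$, hence is a nonzero integer, and a finite product of nonzero complex numbers is nonzero. The whole argument is thus pure combinatorial bookkeeping atop the row-type stratification of $\Delta_1(\underline{\alpha})$ already carried out in the previous subsection.
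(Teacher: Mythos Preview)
Your proposal is correct and follows essentially the same approach as the paper: both read off the diagonal entries from the preceding row-type stratification and count multiplicities, with your convention $\alpha_0 := -1$ simply unifying the paper's bullet-list case analysis into a single formula. The paper in fact states this Fact without a separate proof, treating it as an immediate consequence of the diagonal-entry computation just above, so your write-up is a more explicit version of the same bookkeeping.
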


Note that this term does not have $y$ as a factor. Thus the fraction:
\[
\frac{
{(y')}^{\alpha_n - \alpha_{n-1} - 1}{(y'')}^{
\alpha_{n-1} - \alpha_{n-2} - 1}
\dots
{(y^{(n-1)})}^{\alpha_2 - \alpha_1 - 1}
{(y^{(n)})}^{\alpha_1}
}
{y^{\alpha_n + 1}}
\]
is already in its reduced form. The power of the denominator is
$\alpha_n + 1$, which is the maximum we can get when developing:
\[
\frac{\Delta_1(\underline{\alpha})}{y^{\alpha_n + 1}}.
\]
Furthermore, $\Delta_1(\underline{\alpha})$ satisfies the conditions
of~\eqref{minth}, so that its minimal monomial is obtained on the
diagonal. From those two facts one deduces the

\begin{Property}
The minimal rational term in:
\[
\frac{\Delta_1(\underline{\alpha})}{y^{\alpha_n + 1}}
\]
is:
\[
\frac{
{(y')}^{\alpha_n - \alpha_{n-1} - 1}{(y'')}^{\alpha_{n-1} - \alpha_{n-2} - 1}
\dots
{(y^{(n-1)})}^{\alpha_2 - \alpha_1 - 1}
{(y^{(n)})}^{\alpha_1}
}
{y^{\alpha_n + 1}}.
\]
\end{Property}

In the natural basis $\{x^{(\underline{a})}\}_{\underline{a} \in \mathcal{A}}$ for $E$, one can consider the following two elements
\[
x'x' \quad \quad \text{and} \quad \quad xx'',
\]
which, in the chart $U_1$, can be written as
\[
\frac{y'y'}{y^4} \quad \quad \text {and} \quad \quad \frac{-y''}{y^3} + \frac{2y'y'}{y^4},
\]
so that they share the same minimal term. In the next lemma we prove that this does not happen in the basis $\{\Delta_0(\underline{\alpha})\}_{\underline{\alpha} \in \mathcal{B}}$.
\begin{Lemma}
If $\underline{\alpha} \neq \underline{\beta}$ 
are two distinct sequences in $\mathcal{B}$, then:
\[
\frac{\Delta_1(\underline{\alpha})}{y^{\alpha_n + 1}}
\ \ \ \text{and} \ \ \
\frac{\Delta_1(\underline{\beta})}{y^{\beta_m + 1}}
\]
have different smallest terms.
\end{Lemma}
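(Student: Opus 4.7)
The plan is to show directly that the assignment $\underline{\alpha} \mapsto T(\underline{\alpha})$, where $T(\underline{\alpha})$ denotes the minimal rational term identified in the preceding Property, is injective on $\mathcal{B}$. Since the lemma asserts exactly that different $\underline{\alpha}$'s produce different minimal terms, injectivity of this map is equivalent to the desired statement.

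To establish injectivity, I would reconstruct $\underline{\alpha}$ from $T(\underline{\alpha})$ in three steps. First, the denominator $y^{\alpha_n+1}$ is already in reduced form (no factor of $y$ appears in the diagonal product), so reading off its exponent recovers $\alpha_n$. Second, summing all exponents of the derivatives $y^{(1)}, y^{(2)}, \dots$ appearing in the numerator of $T(\underline{\alpha})$ gives, by telescoping,
\[
(\alpha_n - \alpha_{n-1} - 1) + \cdots + (\alpha_2 - \alpha_1 - 1) + \alpha_1 \,=\, \alpha_n - n + 1,
\]
so combining this total numerator degree with the already-known value of $\alpha_n$ recovers the length $n$ of the sequence. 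Third, the exponent of $y^{(n)}$ in the numerator is exactly $\alpha_1$, and for each $k = 1, \dots, n-1$ the exponent of $y^{(n-k)}$ equals $\alpha_{k+1} - \alpha_k - 1$; this allows us to recover $\alpha_2, \alpha_3, \dots, \alpha_{n-1}$ inductively from $\alpha_1$.

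The entire sequence $\underline{\alpha}$ is therefore determined by $T(\underline{\alpha})$, which is the desired injectivity. I do not foresee any serious obstacle; the only subtlety to keep in mind is that all the exponents $\alpha_{i+1} - \alpha_i - 1$ are non-negative because the $\alpha_i$'s are strictly increasing in $\mathcal{B}$, which guarantees that no unintended cancellation occurs in $T(\underline{\alpha})$ and that the above reconstruction remains valid in degenerate situations such as $\alpha_1 = 0$ (where the factor $(y^{(n)})^{\alpha_1} = 1$ simply disappears, yet the length $n$ is still correctly recovered through the total-degree computation).
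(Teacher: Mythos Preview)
Your proof is correct and follows essentially the same approach as the paper: both arguments reconstruct $\underline{\alpha}$ from the minimal term $T(\underline{\alpha})$ by first reading off $\alpha_n$ from the denominator, then recovering the length $n$ from the total number of factors in the numerator (which equals your telescoped sum $\alpha_n - n + 1$), and finally determining the remaining $\alpha_i$'s inductively from the individual exponents of the $y^{(l)}$. The only cosmetic difference is that the paper phrases this as a contraposition while you phrase it as injectivity of $\underline{\alpha}\mapsto T(\underline{\alpha})$.
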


\begin{proof}
Let $\underline{\alpha} = (\alpha_1, \dots, \alpha_n)$ and
$\underline{\beta} = (\beta_1, \dots, \beta_m)$ be two sequences in
$\mathcal{B}$ such that
$\frac{\Delta_1(\underline{\alpha})}{y^{\alpha_n + 1}}$ and
$\frac{\Delta_1(\underline{\beta})}{y^{\beta_m + 1}}$ have the same
minimal term.

In that case, the powers of $y$ at the denominators must coincide, so
that $\alpha_n = \beta_m$. Furthermore, the number of factors in the
terms of $\Delta_1(\underline{\alpha})$ is equal to $\alpha_n + 1 -
n$. Since this must be equal to the number of factors of the terms of
$\Delta_1(\underline{\beta})$, we deduce that $n = m$: the two
sequences $\underline{\alpha}$ and $\underline{\beta}$ have same
length.

Now, by looking at the number of factors that are $y^{(l)}$ for all $l
\in \llbracket 1, n \rrbracket$ in $\Delta_1(\underline{\alpha})$, and
by comparing these numbers with the ones of
$\Delta_1(\underline{\beta})$, we deduce that:
\[
\aligned
\alpha_n - \alpha_{n-1} - 1 &= \beta_n - \beta_{n-1} - 1, \\
\alpha_{n-1} - \alpha_{n-2} - 1 &= \beta_{n-1} - \beta_{n-1} - 1, \\
&\vdots \\
\alpha_2 - \alpha_1 - 1 &= \beta_2 - \beta_1 - 1, \\
\alpha_1 &= \beta_1. \\
\endaligned
\]

By induction, we see that $\alpha_{n - t} = \beta_{n - t}$ for all
$t \in \llbracket 0, n - 1 \rrbracket$, which concludes the proof by
contraposition.
\end{proof}

We are now ready to prove that the inclusion in Corollary~\ref{coco2}
is, in fact, an equality.

\begin{Theorem}
For every integer $d$ and $k \geqslant d-1$, the family
$\{\Delta_0(\underline{\alpha})\}_{\underline{\alpha} \in
\mathcal{B}^d}$ is a basis of:
\[
H^0(\mathbb{P}^1(\mathbb{C}), E^{\text{GG}}_{k, \infty}(d)).
\]
Thus:
\[
\dim\,
H^0\big(\mathbb{P}^1(\mathbb{C}),\,
E^{\text{GG}}_{k, \infty}(d)
\big) 
= 
2^d.
\]
\end{Theorem}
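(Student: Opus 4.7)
The plan is to combine Corollary~\ref{coco2} (which already gives the inclusion $\subset$ and the lower bound $\dim \geqslant 2^d$) with a matching upper bound. First I would dispose of the numerical count: an element of $\mathcal{B}^d$ is a strictly increasing tuple in $\{0,1,\ldots,d-1\}$, possibly empty, hence $|\mathcal{B}^d| = 2^d$. Linear independence of the family $\{\det\,\Delta_0(\underline{\alpha})\}_{\underline{\alpha}\in\mathcal{B}^d}$ comes for free from Theorem~\ref{th_base}, since that family is part of a basis of the ambient polynomial space $E$.

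The core task is therefore to show that the family is \emph{generating} in $H^0(\mathbb{P}^1,\,E^{\mathrm{GG}}_{k,\infty}(d))$. Let $s$ be a polynomial global section. Viewed on $U_0$, we have $s \in E$, so by Theorem~\ref{th_base} there is a unique finite decomposition
\[
s \,=\, \sum_{i=1}^{r} \lambda_i\,\det\,\Delta_0(\underline{\alpha}^i),
\qquad
\lambda_i \in \mathbb{C}^\ast,\,\, \underline{\alpha}^i \in \mathcal{B}\,\,\text{pairwise distinct}.
\]
It suffices to prove that each $\underline{\alpha}^i$ actually lies in $\mathcal{B}^d$, i.e.\ that $M_i := \alpha^i_{n_i} \leqslant d-1$.

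I would argue by contradiction, assuming $M^{\ast} := \max_i M_i \geqslant d$, and pass to the chart $U_1$. Using the change-of-charts formula established before Corollary~\ref{coco2},
\[
y^d\,s
\,=\,
\sum_{i=1}^{r} \pm\, \lambda_i\,\frac{\det\,\Delta_1(\underline{\alpha}^i)(y)}{y^{M_i + 1 - d}},
\]
which must be a polynomial in $y,y',y'',\ldots$ since $s$ extends as a global section of the twisted bundle. Expanding each $\det\,\Delta_1(\underline{\alpha}^i)(y)$ as a sum of reduced rational terms $y^{(\underline{a})}/y^s$ and applying the ordering $\prec$ on such fractions, the Property of the previous subsection identifies the $\prec$-smallest rational term of $\det\,\Delta_1(\underline{\alpha}^i)(y)/y^{M_i+1}$ explicitly, and the Lemma just before the theorem asserts that these smallest terms are \emph{pairwise distinct} across the distinct $\underline{\alpha}^i$. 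Hence, in the full sum, the $\prec$-smallest rational term survives with a nonzero coefficient $\pm\lambda_{i_0}$: no cancellation occurs.

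The main obstacle is precisely this non-cancellation, but it is exactly what the Lemma delivers. Once it is available, the surviving $\prec$-smallest term carries a pole in $y$ of order $M_{i_0} + 1 - d \geqslant M^{\ast} + 1 - d \geqslant 1$ (because $i_0$ achieves the highest pole order, by definition of $\prec$). This contradicts the fact that $y^d s$ is a polynomial in $y, y', y'', \ldots$ with no pole at $y = 0$. Therefore $M^{\ast} \leqslant d-1$, every $\underline{\alpha}^i$ belongs to $\mathcal{B}^d$, and the family spans the global sections. Combined with Corollary~\ref{coco2} and the count $|\mathcal{B}^d| = 2^d$, this gives the basis statement and the equality $\dim H^0(\mathbb{P}^1,\,E^{\mathrm{GG}}_{k,\infty}(d)) = 2^d$.
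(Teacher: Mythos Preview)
Your proposal is correct and follows essentially the same route as the paper: decompose a global section in the determinant basis via Theorem~\ref{th_base}, pass to the chart $U_1$, use the Property and the Lemma to isolate a non-cancelling $\prec$-minimal rational term whose pole order equals $M_{i_0}+1$ with $M_{i_0}=\max_i \alpha^i_{n_i}$, and conclude from holomorphy that every $\underline{\alpha}^i$ lies in $\mathcal{B}^d$. The only cosmetic difference is that you frame the last step as a contradiction (assuming $M^\ast \geqslant d$) whereas the paper argues directly that $\alpha^m_{n_m}+1\leqslant d$ for the minimal index $m$ and then propagates this bound to all $i$.
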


\begin{proof}
Let $P$ be a holomorphic section in $E^{\text{GG}}(d)$, that we may
write in the basis $\big\{\det\,
\Delta_0( \underline{\alpha}) 
\big\}_
{\underline{\alpha} 
\in
\mathcal{B}}$ as:
\begin{equation}
\label{combinaisonlin}
P 
= 
\sum_{i = 1}^k \lambda_i\,
\det\,\Delta_0(\underline{\alpha}^i).
\end{equation}
After the change of chart, each determinant $\det\,
\Delta_0(\underline{\alpha}^i)$ will give us a smallest term:
\[
\frac{y^{(\underline{a}^i)}}{y^{\alpha_{n_i}^i + 1}}.
\]
And amongst these terms, one can find the smallest of the smallest:
\begin{equation}
\label{minimum}
\frac{y^{(\underline{a}^m)}}{y^{\alpha_{n_m}^m + 1}} 
\ \ \ \text{for some} \ 
m \in \llbracket1,k\rrbracket.
\end{equation}
In the sum~\eqref{combinaisonlin}, this term does not vanish. 

Since $P$ is a holomorphic section of $E^{\text{GG}}(d)$, the pole of
this smallest term must be compensated by $y^d$.

Hence:
\[
\alpha_{n_m}^m + 1 \leqslant d,
\]
and thus we have, for all $i \in \llbracket1,k\rrbracket$:
\[
\alpha_{n_i}^i + 1
\leqslant
\alpha_{n_m}^m + 1
\leqslant
d,
\]
as~\eqref{minimum} is the smallest term, so it has the biggest power
in its denominator.

We conclude that $\underline{\alpha}^i$ is in $\mathcal{B}^d$ for all
$i \in \llbracket1,k\rrbracket$.
\end{proof}

\section{The general case}\label{cas_general}

In the general case, a point $p$ in the projective space
$\mathbb{P}^N(\mathbb{C})$ has $N + 1$ homogeneous coordinates 
$[X_0 \colon X_1\colon \cdots \colon X_N]$ 
and $N$ affine coordinates $x_{01}, x_{02}, \dots,
x_{0N}$ in the chart $U_0$. We will follow the same strategy than in
the one-dimensional case: we start by introducing a basis of the
polynomial space given by determinants, then we describe the changes
of charts. Finally, in the chart $U_1$, we will define an order on the
rational terms, and prove that every element of our basis yields a
different smallest term.

\subsection{The polynomial space of local sections}

A local, polynomial section of the Green -Griffiths bundle over the
open $U_0$ is given by a polynomial in the space:
\[
E
=
\mathbb{C}[x_{01}, \dots, x_{0N}, x_{01}', \dots, x_{0N}', \dots].
\]
The natural basis of $E$ is given by the monomials. For every sequence
$\underline{a} = (a_1, \dots, a_n)$ in $\mathcal{A}$, and for
every integer $i \in \llbracket 1, N \rrbracket$, we use the notation:
\[
x_{0i}^{(\underline{a})}
=
x_{0i}^{(a_1)}\cdots x_{0i}^{(a_n)},
\]
and, for a $N$-tuple $\underline{\underline{a}} = (\underline{a^1},
\underline{a^2}, \dots, \underline{a^N})$ in $\mathcal{A}^N$, we
define the monomial:
\[
x_0^{\left(\underline{\underline{a}}\right)}
=
x_{01}^{(\underline{a^1})}x_{0 2}^{(\underline{a^2})} 
\cdots x_{0N}^{(\underline{a^N})}.
\]

\begin{Fact}
The family
\[
\Big\{ 
x_0^{\left( \underline{\underline{a}}\right)}
\Big\}_{\underline{\underline{a}} \in \mathcal{A}^N}
\]
is a basis of $E$.
\end{Fact}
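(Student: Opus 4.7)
The plan is to recognize this Fact as essentially a bookkeeping statement: $E$ is, by construction, the polynomial $\mathbb{C}$-algebra on the countable alphabet $\{x_{0i}^{(l)} \colon 1 \leqslant i \leqslant N,\ l \in \mathbb{N}\}$, and any such polynomial algebra admits the set of its monomials as a canonical $\mathbb{C}$-basis. The only content of the statement is therefore that the parametrisation by $\mathcal{A}^N$ given by $\underline{\underline{a}} \mapsto x_0^{(\underline{\underline{a}})}$ is a bijection onto this set of monomials.

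First I would reduce to the one-variable case, which is already essentially contained in Subsection~{\ref{subsection-polynomial-space}}. Recall that the Fact just before Subsection~{\ref{poids_monomes}} asserts that $\{x^{(\underline{a})}\}_{\underline{a} \in \mathcal{A}}$ is the canonical basis of $\mathbb{C}[x,x',x'',\dots]$. The proof of that Fact consists in observing that an arbitrary monomial in the $x^{(l)}$ is determined by the multiset of derivative orders it involves, and that listing this multiset in weakly increasing order produces exactly one sequence in $\mathcal{A}$; hence the map $\underline{a} \mapsto x^{(\underline{a})}$ is a bijection from $\mathcal{A}$ onto the monomials of $\mathbb{C}[x,x',x'',\dots]$.

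Next I would pass to several variables by the standard tensor decomposition:
\[
E
\,=\,
\mathbb{C}\big[x_{01}^{(l)}\big]_{l\in\mathbb{N}}
\otimes_{\mathbb{C}}
\mathbb{C}\big[x_{02}^{(l)}\big]_{l\in\mathbb{N}}
\otimes_{\mathbb{C}}
\cdots
\otimes_{\mathbb{C}}
\mathbb{C}\big[x_{0N}^{(l)}\big]_{l\in\mathbb{N}}.
\]
Each factor has, by the one-variable Fact, the basis $\{x_{0i}^{(\underline{a^i})}\}_{\underline{a^i}\in\mathcal{A}}$. The tensor product of bases is a basis of the tensor product, so the set of products
\[
x_{01}^{(\underline{a^1})}\,x_{02}^{(\underline{a^2})}\,\cdots\,x_{0N}^{(\underline{a^N})}
\,=\,
x_0^{(\underline{\underline{a}})}
\qquad
\big(\underline{\underline{a}} = (\underline{a^1},\dots,\underline{a^N}) \in \mathcal{A}^N\big)
\]
is a basis of $E$.

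There is essentially no hard step; the only point one has to check carefully is that the indexing map $\underline{\underline{a}} \mapsto x_0^{(\underline{\underline{a}})}$ is injective, which follows because in each variable block the weakly increasing ordering is unique, and that it is surjective onto the set of all monomials of $E$, which follows by first grouping factors having the same index $i$ and then rearranging them in weakly increasing order of derivatives. The main conceptual obstacle, if any, is purely notational: making sure that the compact symbol $x_0^{(\underline{\underline{a}})}$ accurately encodes the multiset structure of a general monomial in $E$; once this is verified, the Fact follows at once from the one-variable case already established.
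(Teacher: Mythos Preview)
Your proposal is correct. In the paper this Fact is stated without proof (it is treated as an immediate observation, just like the corresponding one-variable Fact in Subsection~\ref{subsection-polynomial-space}), so your argument via the tensor decomposition of $E$ and the one-variable case is exactly the natural justification the paper implicitly relies on.
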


We define $\mathcal{B}^{N+}$ to be the subset of $\mathcal{B}^N$ of
$N$-tuples $(\underline{\alpha^1}, \underline{\alpha^2}, \dots,
\underline{\alpha^N}) \in \mathcal{B}^{N}$ such that:
\[
\aligned
0 \quad \quad \quad &\leqslant \alpha_1^N < \alpha_2^N < \cdots < \alpha_{n_N}^N, \\
n_N \quad \quad \ \ &\leqslant \alpha_1^{N - 1} < \alpha_2^{N - 1} < \cdots < \alpha_{n_{N - 1}}^{N - 1}, \\
n_N + n_{N - 1} \quad &\leqslant \alpha_1^{N - 2} < \alpha_2^{N - 2} < \cdots < \alpha_{n_{N - 2}}^{N - 2}, \\
&\,\,\:\vdots \\
n_N + n_{N - 1} + \cdots + n_2 &\leqslant \alpha_1^1 < \alpha_2^1 < \cdots < \alpha_{n_1}^1.
\endaligned
\]
Throughout this section, we will often use the abbreviations:
\[
\aligned
s_{N+1} &= 0, \\
s_N &= n_N,\\
s_{N - 1} &= n_N + n_{N - 1}, \\
&\,\,\:\vdots \\
s_1 &= n_N + n_{N - 1} + \cdots + n_1.
\endaligned
\]

There is a natural correspondence between 
$\mathcal{A}^N$ and $\mathcal{B}^{N+}$ given by:
\[
\begin{array}{ccccl}
\tau^+ & : & \mathcal{B}^{N+} & \to & \quad \quad \quad \quad \mathcal{A}^N 
\\
& & 
(\alpha^j_i)_{\substack{1 \leqslant j \leqslant N \\ 1 \leqslant i \leqslant n_j}}
& 
\mapsto & 
(\alpha^j_i - s_{j+1} - i + 1)_{\substack{1 \leqslant j \leqslant N \\ 1 \leqslant i \leqslant n_j}}
\\
\end{array}.
\]

\begin{Lemma}
The map $\tau^+$ is a bijection.
\end{Lemma}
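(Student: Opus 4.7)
The plan is to mirror the proof of the earlier one-dimensional bijection $\tau\colon \mathcal{B}\to\mathcal{A}$ by writing down an explicit inverse and verifying that its image lands in the correct set. The key observation is that although $\tau^+$ couples together all $N$ blocks via the cumulative shifts $s_{j+1}=n_N+n_{N-1}+\cdots+n_{j+1}$, the lengths $n_j$ of the component sequences are invariants of the input and can be read off either from a tuple in $\mathcal{B}^{N+}$ or from a tuple in $\mathcal{A}^N$. Hence $s_{j+1}$ is well-defined from the data in either direction, and the inverse can be written block by block.

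First I will define, for $\underline{\underline{a}}=(\underline{a^1},\dots,\underline{a^N})\in\mathcal{A}^N$ with $\underline{a^j}$ of length $n_j$, the candidate map
\[
(\tau^+)^{-1}\colon\underline{\underline{a}}\longmapsto
\bigl(a^j_i+s_{j+1}+i-1\bigr)_{\substack{1\leqslant j\leqslant N\\ 1\leqslant i\leqslant n_j}},
\]
with $s_{j+1}$ computed from the lengths of $\underline{a^{j+1}},\dots,\underline{a^N}$. I then check that the image is in $\mathcal{B}^{N+}$: inside each block, setting $\alpha^j_i:=a^j_i+s_{j+1}+i-1$, the strict inequality $\alpha^j_{i+1}-\alpha^j_i = (a^j_{i+1}-a^j_i)+1\geqslant 1$ follows from $a^j_{i+1}\geqslant a^j_i$, and the lower bound $\alpha^j_1=a^j_1+s_{j+1}\geqslant s_{j+1}$ follows from $a^j_1\geqslant 0$, which is exactly the block-wise constraint defining $\mathcal{B}^{N+}$.

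Next I verify that $\tau^+$ itself sends $\mathcal{B}^{N+}$ into $\mathcal{A}^N$: with $a^j_i:=\alpha^j_i-s_{j+1}-i+1$, the inequality $a^j_{i+1}-a^j_i=(\alpha^j_{i+1}-\alpha^j_i)-1\geqslant 0$ comes from $\alpha^j_{i+1}>\alpha^j_i$, and $a^j_1=\alpha^j_1-s_{j+1}\geqslant 0$ is exactly the defining lower bound of $\mathcal{B}^{N+}$. Since each block $\underline{a^j}$ has the same length $n_j$ as $\underline{\alpha^j}$, the shifts $s_{j+1}$ are preserved under either map, so composing the two formulas block by block returns the identity on both sides:
\[
(\alpha^j_i-s_{j+1}-i+1)+s_{j+1}+i-1=\alpha^j_i,\qquad
(a^j_i+s_{j+1}+i-1)-s_{j+1}-i+1=a^j_i.
\]

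There is no real obstacle here; the only subtlety is conceptual, namely being careful to note that the $n_j$'s (and hence the $s_{j+1}$'s) are intrinsic to the tuple on either side, so the map is well-defined without having to fix $n_1,\dots,n_N$ in advance. Once this is observed, the argument is a routine verification analogous to the one given earlier for the bijection $\tau\colon\mathcal{B}\to\mathcal{A}$.
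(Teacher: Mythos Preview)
Your proof is correct and takes essentially the same approach as the paper: both exhibit the explicit inverse $(a^j_i)\mapsto (a^j_i+s_{j+1}+i-1)$. Your version is more thorough in that you actually verify the images land in $\mathcal{B}^{N+}$ and $\mathcal{A}^N$ respectively and that the compositions are identities, whereas the paper simply states the inverse formula (as it did in the one-dimensional case) and leaves the verifications implicit.
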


\begin{proof}
The inverse map is given explicitly by:
\[
\begin{array}{ccccl}
(\tau^+)^{-1} & : & \mathcal{A}^{N} & \to & \quad \quad \quad \quad \mathcal{B}^{N+} 
\\
& & 
(a^j_i)_{\substack{1 \leqslant j \leqslant N \\ 1 \leqslant i \leqslant n_j}}
& 
\mapsto & 
(a^j_i + s_{j+1} + i - 1)_{\substack{1 \leqslant j \leqslant N \\ 1 \leqslant i \leqslant n_j}}
\end{array}.
\qedhere
\]
\end{proof}

\subsection{Weighting and ordering the monomials}

Suppose we want to write $x_{0i}^{(a)}$ in the chart $U_j$, for some $j \in \llbracket 1, N \rrbracket$. 
The changes of charts are given by:
\[
\aligned
x_{0i}^{(a)} &= (\frac{x_{ji}}{x_{j0}})^{(a)} 
\ \ \ \text{if} \ i \neq j, \\
x_{0i}^{(a)} &= (\frac{1}{x_{ii}})^{(a)} 
\ \ \ \text{if} \ i = j.
\endaligned
\]

In both cases, we can write:
\[
x_{0i}^{(a)}
=
\frac{\centersmallbullet}{\chi^{a+1}},
\]
where $\centersmallbullet$ is some polynomial in $E$, and $\chi$ is some affine
variable.

We will therefore say that, for all $i \in \llbracket 1, N \rrbracket$
and $a \in \mathbb{N}$, the variable $x_{0i}^{(a)}$ has weight
$a+1$. The weight of a monomial will naturally be defined to be the
sum of the weights of the variables which constitute the
monomial. Formally, we define a weight function on the set
$\mathcal{A}^N$:
\[
\begin{array}{ccccl}
w_{\mathcal{A}^N} & : & \mathcal{A}^N & \to & 
\quad \quad \quad \quad \quad 
\mathbb{N} 
\\
& & (\underline{a^j})_{j \in \llbracket 1, N \rrbracket} & \mapsto & w_{\mathcal{A}}(\underline{a^1}) + w_{\mathcal{A}}(\underline{a^2}) + \cdots + w_{\mathcal{A}}(\underline{a^N})
\\
\end{array},
\]
where $w_{\mathcal{A}}$ is the weight function on the set $\mathcal{A}$ defined in Section~\ref{poids_monomes}.

We can use the map $\tau^+$ to pull back this weight function on the set $\mathcal{B}^{N+}$:
\[
\begin{array}{ccccl}
w_{\mathcal{B}^{N+}} & : & \mathcal{B}^{N+} & \to & 
\quad \
\mathbb{N} 
\\
& & \underline{\underline{\alpha}} & \mapsto & w_{\mathcal{A}^N}(\tau^+(\underline{\underline{\alpha}}))
\\
\end{array}.
\]

We can now, for every integer $p$, define the two finite sets:
\[
\mathcal{A}^N_p 
=
\Big\{\underline{\underline{a}} \in \mathcal{A}^N 
\colon\,\,
w_{\mathcal{A}^N}(\underline{\underline{a}}) = p
\Big\},
\]
and:
\[
\mathcal{B}^{N+}_p 
=
\Big\{\underline{\underline{\alpha}} \in \mathcal{B}^{N+} 
\colon\,\, 
w_{\mathcal{B}^{N+}}(\underline{\underline{\alpha}}) = p
\Big\},
\]
which have same cardinality, for $\tau^+$ induces a bijection between
them.

Furthermore, we will say that the monomial
$x_0^{(\underline{\underline{a}})}$ has weight
$w_{\mathcal{A}^N}(\underline{\underline{a}})$, and we will define the
space of polynomials of weight $p$:
\[
E_p
=
\text{Vect}\Big(x_0^{
\left(\underline{\underline{a}}\right)}
\Big)_{\underline{\underline{a}} \in \mathcal{A}_p^N}.
\]

A basis of $E_p$ is given by the monomials associated to sequences in
$\mathcal{A}_p^N$, so that we have, for every $p \in \mathbb{N}$, the
triple equality:
\[
\dim (E_p)
=
\text{Card}(\mathcal{A}_p^N)
=
\text{Card}(\mathcal{B}_p^{N+}).
\]

In Section~\ref{ordre_monomes}, we ordered the monomials in the
variables $x, x', x'', \dots$. We shall generalize this order to
include all the affine coordinates $x_{01}, x_{02}, \dots, x_{0N}$. We
start by ordering the variables:
\[
\aligned
1 &\prec x_{0N} \prec x_{0N}' \prec x_{0N}'' \prec \cdots \\
&< x_{0 N-1} < x_{0 N-1}' < x_{0 N-1}'' < \cdots \\
&\vdots \\
&< x_{01} < x_{01}' < x_{01}'' < \cdots.
\endaligned
\]

Now, suppose we have two monomials $\chi_1 \chi_2 \cdots \chi_n$ and
$\chi_1'\chi_2'\cdots \chi_{m}'$ that we want to compare. We may also
assume that their variables are ordered:
\[
\chi_1 \leqslant \chi_2 \leqslant \cdots \leqslant \chi_n
\ \ \ \text{and} \ \ \
\chi_1' \leqslant \chi_2' \leqslant \cdots \leqslant \chi_m'.
\]

We will say that $\chi_1 \chi_2 \cdots \chi_n \prec
\chi_1'\chi_2'\cdots \chi_{m}'$ if and only if:
\[
\chi_n \prec \chi_m',
\]
or
\[
\chi_n = \chi_m'
\ \ \text{and} \ 
\chi_1 \chi_2 \cdots \chi_{n-1} \prec \chi_1'\chi_2'\cdots \chi_{m-1}'.
\]
By induction, this defines a total ordering on the set of monomials.

\subsection{The determinant sections}

Following what we did in the case $N=1$, we will now introduce a new
basis of the polynomial space $E$ in which the elements are given by
determinants. The matrices we will have to consider must contain all
the variables $x_{01}, x_{02}, \dots, x_{0N}$.

\begin{Definition}
Let $\underline{\underline{\alpha}}$ be a $N$-tuple of sequences in
$\mathcal{B}^{N+}$, that we may write as:
\[
\underline{\underline{\alpha}}
=
((\alpha^1_1, \alpha^1_2, \dots, \alpha^1_{n_1}), \dots , (\alpha^N_1, \alpha^N_2, \dots, \alpha^N_{n_N})).
\]
To $\underline{\underline{\alpha}}$, we associate the affine matrix
$\Delta_0(\underline{\underline{\alpha}})$ with columns:
\[
\mathcal{C}_{\alpha^N_1}(x_{0N}), \
\mathcal{C}_{\alpha^N_2}(x_{0N}), \
\dots, \
\mathcal{C}_{\alpha^N_{n_N}}(x_{0N}), \
\mathcal{C}_{\alpha^{N - 1}_1}(x_{0 N-1}), \
\dots, \
\mathcal{C}_{\alpha^1_{n_1}}(x_{0 1}).
\]
The length of the columns is chosen to be such that the resulting
matrix is a square, so they are of length $s_N = n_1 + n_2 + \cdots +
n_N$.
\end{Definition}

We can now state the main result of this section.

\begin{Theorem}
\label{th_base2}
The family:
\[
\Big\{ 
\det
\Delta_0(\underline{\underline{\alpha}})
\Big\}_{\underline{\underline{\alpha}} \in \mathcal{B}^{N+}}
\]
is a basis of the polynomial space $E$.
\end{Theorem}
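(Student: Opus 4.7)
The plan is to mirror the proof of Theorem~\ref{th_base} in the one-dimensional case, replacing the pair $(\mathcal{A},\mathcal{B})$ with $(\mathcal{A}^N,\mathcal{B}^{N+})$ and relying on the bijection $\tau^+$ together with the graded structure $E = \bigoplus_{p} E_p$. The heart of the argument will again be the identification, for each $\underline{\underline{\alpha}} \in \mathcal{B}^{N+}$, of a unique minimal monomial of $\det\,\Delta_0(\underline{\underline{\alpha}})$ with respect to the total order defined on monomials in $E$.

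First, I will establish two preliminary facts, adapting the two lemmas used in the proof of Theorem~\ref{th_base}. The first fact is that all monomials appearing in $\det\,\Delta_0(\underline{\underline{\alpha}})$ have the same weight $w_{\mathcal{B}^{N+}}(\underline{\underline{\alpha}})$, so that $\det\,\Delta_0(\underline{\underline{\alpha}}) \in E_{w_{\mathcal{B}^{N+}}(\underline{\underline{\alpha}})}$; this follows by expanding the determinant via the Leibniz formula, noting that each term of each column has weight $\alpha^k_j + 1$, and observing that a permutation does not alter the total weight of a product of entries chosen one per column. The second fact is that the \emph{smallest} monomial of $\det\,\Delta_0(\underline{\underline{\alpha}})$ is the diagonal product, equal up to a nonzero constant to $x_0^{(\tau^+(\underline{\underline{\alpha}}))}$. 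To see this, I will verify that $\Delta_0(\underline{\underline{\alpha}})$ still satisfies the hypotheses of Theorem~\ref{minth} (or rather, of its obvious extension to the full variable set): within a single block corresponding to $x_{0k}$, strict increase of the top-line indices $\alpha^k_1 < \alpha^k_2 < \cdots$ guarantees that moving up-and-right inside the block strictly increases the derivative order of $x_{0k}$; and moving right across blocks shifts the variable from $x_{0k}$ to $x_{0k'}$ with $k' < k$, which by the chosen order on variables is strictly larger. The diagonal entry on column $s_{k+1}+j$ is $\binom{\alpha^k_j}{s_{k+1}+j-1}\,x_{0k}^{(\alpha^k_j - s_{k+1} - j + 1)}$, which is nonzero precisely because $\alpha^k_j \geqslant s_{k+1}+j-1$ by definition of $\mathcal{B}^{N+}$, and the product of these diagonal entries is exactly $c\,x_0^{(\tau^+(\underline{\underline{\alpha}}))}$.

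Once these two facts are in hand, linear independence follows exactly as in the proof of Theorem~\ref{th_base}: given a nontrivial linear combination $\sum_i \lambda_i \det\,\Delta_0(\underline{\underline{\alpha}}^i)$ with distinct $\underline{\underline{\alpha}}^i$, the minimal monomials $x_0^{(\tau^+(\underline{\underline{\alpha}}^i))}$ are pairwise distinct since $\tau^+$ is a bijection; hence the smallest among them appears in the sum only once and cannot cancel. For the spanning property, I restrict to each graded piece: by the first fact, $\{\det\,\Delta_0(\underline{\underline{\alpha}})\}_{\underline{\underline{\alpha}} \in \mathcal{B}^{N+}_p}$ is a free family in $E_p$, and its cardinality is $\mathrm{Card}(\mathcal{B}^{N+}_p) = \mathrm{Card}(\mathcal{A}^N_p) = \dim E_p$, so it is a basis of $E_p$. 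Summing over $p \in \mathbb{N}$ yields a basis of the whole space $E$.

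The main obstacle is the second preliminary fact, namely the verification that the block-structured matrix $\Delta_0(\underline{\underline{\alpha}})$ still satisfies the monomial-ordering hypothesis of Theorem~\ref{minth} with the extended ordering $x_{0N} \prec x_{0,N-1} \prec \cdots \prec x_{01}$. This requires careful bookkeeping of rows and columns across the $N$ blocks and of the interplay between the strict inequalities defining $\mathcal{B}^{N+}$ (which control each block internally) and the lower-bound inequalities $\alpha^k_1 \geqslant s_{k+1}$ (which guarantee that the diagonal entries are nonzero and that columns of different blocks are properly aligned). The rest is routine once this diagonal-dominance property is established.
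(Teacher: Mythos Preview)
Your proposal is correct and follows essentially the same approach as the paper's own proof: two preliminary lemmas (smallest monomial is the diagonal term $x_0^{(\tau^+(\underline{\underline{\alpha}}))}$, and all monomials share weight $w_{\mathcal{B}^{N+}}(\underline{\underline{\alpha}})$), then freeness via distinct minimal monomials and spanning via the cardinality count $\mathrm{Card}(\mathcal{B}^{N+}_p)=\dim E_p$ on each graded piece. Your verification that $\Delta_0(\underline{\underline{\alpha}})$ satisfies the hypotheses of Theorem~\ref{minth} (both within and across blocks) is in fact more detailed than the paper, which simply asserts this without justification.
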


We will use lemmas analogous to the ones we used to prove 
Theorem~\ref{th_base}.

\begin{Lemma}
When expanding the determinant 
$\det \Delta_0(\underline{\underline{\alpha}})$ 
as a linear combination of monomials in $E$, 
the smallest monomial is:
\[
x^{\left({\tau^+(\underline{\underline{\alpha}})}\right)}.
\]
\end{Lemma}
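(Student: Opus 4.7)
The plan is to adapt the three-step strategy used in the proof of Theorem~\ref{th_base}: apply a multi-variable extension of Theorem~\ref{minth} to the square matrix $\Delta_0(\underline{\underline{\alpha}})$. The three steps are to verify that every diagonal entry is nonzero, to identify the diagonal product with $c \cdot x_0^{(\tau^+(\underline{\underline{\alpha}}))}$ for some nonzero scalar $c$, and to check the upper-right comparison: for each nonzero entry $m_{ij}$, every entry $m_{lm}$ with $l \leqslant i$ and $m \geqslant j$ is either zero or carries a strictly larger variable in our chosen total ordering.

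First I would locate each column precisely: column $j$ lies in the unique block $k$ with $s_{k+1} < j \leqslant s_k$, and within that block it is the $i$-th column for $i := j - s_{k+1}$, so it equals $\mathcal{C}_{\alpha^k_i}(x_{0k})$. The diagonal entry is then $\binom{\alpha^k_i}{s_{k+1}+i-1}\, x_{0k}^{(\alpha^k_i - s_{k+1} - i + 1)}$. The defining inequality of $\mathcal{B}^{N+}$ gives $\alpha^k_i \geqslant s_{k+1} + (i - 1)$, so both the binomial and the derivative order are nonnegative; the diagonal is nonvanishing, and the product of diagonal entries collects exactly the components of $\tau^+(\underline{\underline{\alpha}})$ as derivative orders. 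This retrospectively explains the particular shifts built into the definition of $\tau^+$.

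For the upper-right comparison, suppose $m_{ij}$ is nonzero and $j$ is in block $k$. If a position $(l, m)$ with $l \leqslant i$ and $m \geqslant j$ still lies in block $k$, the one-dimensional argument used in Theorem~\ref{th_base} applies verbatim: the strict increase of the $\alpha^k_{\bullet}$ together with $l \leqslant i$ forces the derivative order at $(l, m)$ to strictly exceed the one at $(i, j)$ whenever $(l, m) \neq (i, j)$. If instead $m$ falls in a block $k' < k$, hence a block sitting further right in the matrix, the variable at $(l, m)$ is some derivative of $x_{0k'}$ and by the ordering $x_{0N} \prec x_{0N-1} \prec \cdots \prec x_{01}$ is strictly larger than every derivative of $x_{0k}$. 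The main obstacle here is essentially book-keeping between absolute column indices, intra-block indices, and the cumulative shifts $s_k$; once this alignment is carried out cleanly, the proof of Theorem~\ref{minth} applies without modification, since it used only the total ordering of the variables in the matrix and not the fact that they came from a single one-parameter family.
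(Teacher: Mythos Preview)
Your proposal is correct and follows the same approach as the paper: identify the diagonal entries, check they form $x^{(\tau^+(\underline{\underline{\alpha}}))}$ up to a nonzero scalar, and invoke Theorem~\ref{minth}. The paper's own proof is considerably terser\,---\,it computes the diagonal entries and then simply asserts that ``the matrix $\Delta_0(\underline{\underline{\alpha}})$ satisfies the conditions of Theorem~\ref{minth}'' without spelling out the upper-right comparison or the passage from one to several variables; your version fills in exactly those details, including the correct observation that the proof of Theorem~\ref{minth} only uses the total ordering on the entry variables and therefore carries over verbatim to the multi-variable setting.
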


\begin{proof}
Consider the column number $s_{j-1} + i$, with $i \in \llbracket 1,
n_j \rrbracket$. Its diagonal entry is on line $s_{j - 1} +
i$. Since the column is $\mathcal{C}_{\alpha^{j-1}_i}(x_j)$, this
corresponds to $x_{0j}^{(\alpha^j_i - s_{j - 1} - i + 1)}$, up to some
nonzero binomial coefficient. The product of those variables is
$x^{(\tau^+(\underline{\underline{\alpha}}))}$. Since the matrix
$\Delta_0(\underline{\underline{\alpha}})$ satisfies the conditions of
Theorem~\ref{minth}, it is the smallest monomial.
\end{proof}

\begin{Lemma}
For every $\underline{\underline{\alpha}}$ in $\mathcal{B}^{N+}$, the
monomials of $\det \Delta_0( \underline{ \underline{\alpha}})$ all
have weight $w_{\mathcal{B}^{N+}}( \underline{ \underline{\alpha}})$:
\[
\det \Delta_0(\underline{\underline{\alpha}})
\in
E_{w_{\mathcal{B}^{N+}}(\underline{\underline{\alpha}})}.
\]
\end{Lemma}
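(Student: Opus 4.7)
The plan is to imitate, block by block, the argument used to prove the analogous one-dimensional weight lemma. I would expand $\det \Delta_0(\underline{\underline{\alpha}})$ by the Leibniz formula as a signed sum over permutations $\sigma \in \mathfrak{S}_{s_1}$, and verify that every resulting nonzero Leibniz term is, up to a nonzero scalar, a monomial of the same weight $w_{\mathcal{B}^{N+}}(\underline{\underline{\alpha}})$.

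Concretely, in the block-indexed convention where column number $s_{j+1} + i$ (with $j \in \llbracket 1, N \rrbracket$ and $i \in \llbracket 1, n_j \rrbracket$) is the column $\mathcal{C}_{\alpha^j_i}(x_{0j})$, the entry in row $k$ of this column is $\binom{\alpha^j_i}{k-1}\, x_{0j}^{(\alpha^j_i - k + 1)}$, hence has weight $\alpha^j_i - k + 2$ whenever it is nonzero. For a fixed permutation $\sigma$, its Leibniz term picks, in column $s_{j+1}+i$, the entry on row $\sigma^{-1}(s_{j+1}+i)$; hence the total weight of the corresponding monomial equals
\[
\sum_{j=1}^N \sum_{i=1}^{n_j}
\big( \alpha^j_i - \sigma^{-1}(s_{j+1}+i) + 2 \big).
\]

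The key observation, generalizing the one used in dimension $N=1$, is that, as $(j,i)$ ranges over its admissible pairs, the integer $s_{j+1}+i$ runs bijectively through $\{1, 2, \dots, s_1\}$. Consequently $\sum_{j,i} \sigma^{-1}(s_{j+1}+i) = \sum_{c=1}^{s_1} c = \sum_{j,i} (s_{j+1}+i)$, because $\sigma^{-1}$ is a permutation of $\{1, \dots, s_1\}$. The total weight therefore collapses to the $\sigma$-independent sum $\sum_{j,i} (\alpha^j_i - s_{j+1} - i + 2)$, which by the very definitions of $\tau^+$ and $w_{\mathcal{B}^{N+}}$ coincides exactly with $w_{\mathcal{B}^{N+}}(\underline{\underline{\alpha}})$. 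I do not anticipate any real obstacle; the only care required is the bookkeeping of the cumulative offsets $s_{j+1}$ when passing from the single-block one-dimensional case to the $N$-block matrix $\Delta_0(\underline{\underline{\alpha}})$.
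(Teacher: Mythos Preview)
Your proposal is correct and follows essentially the same approach as the paper: both expand $\det \Delta_0(\underline{\underline{\alpha}})$ via the Leibniz formula over $\mathfrak{S}_{s_1}$, compute the weight of each Leibniz term as $\sum_{j,i}(\alpha^j_i - \text{row index} + 2)$, and conclude by the permutation-invariance of $\sum_{j,i}(s_{j+1}+i)$. The only cosmetic difference is that you index rows by $\sigma^{-1}(s_{j+1}+i)$ while the paper uses $\sigma(s_{j+1}+i)$, which is merely the dual convention for the Leibniz expansion.
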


\begin{proof}
The determinant can be written as a sum over the permutations:
\[
\det \Delta_0(\underline{\underline{\alpha}})
=
\sum_{\sigma \in \mathfrak{S}_{s_1}}
(-1)^{\epsilon (\sigma)}
\prod_{j=1}^N
\prod_{i = 1}^{n_j}
\binom{\alpha^j_i}{\sigma (s_{j+1} + i) - 1}
x_{0j}^{\alpha^j_i - \sigma (s_{j+1} + i) + 1}.
\]
In this sum, every monomial has weight:
\[
\sum_{j=1}^N
\sum_{i = 1}^{n_j}
\alpha^j_i - \sigma (s_{j+1} + i) + 2
=
\sum_{j=1}^N
\sum_{i = 1}^{n_j}
\alpha^j_i - s_{j+1} - i + 2.
\]
Which is, by definition, the weight
$w_{\mathcal{B}^{N+}}(\underline{\underline{\alpha}})$.
\end{proof}

We can now prove theorem \eqref{th_base2}.

\begin{proof}
The first Lemma implies that the family of determinants $\det
\Delta_0(\underline{\underline{\alpha}})$ is free, as any linear
combination:
\[
\lambda_1 \det
\Delta_0(\underline{\underline{\alpha}}^1)
+
\lambda_2 \det
\Delta_0(\underline{\underline{\alpha}}^2)
+
\cdots
+
\lambda_m \det
\Delta_0(\underline{\underline{\alpha}}^m)
\]
will have a smallest term that does not vanish.
Hence, for every weight $p$, the family:
\[
\Big\{ 
\det
\Delta_0(\underline{\underline{\alpha}})
\Big\}_{\underline{\underline{\alpha}} \in \mathcal{B}^{N+}_p}
\]
is a free family of elements in $E_p$. Since the dimension of $E_p$
corresponds to the number of elements in this family, the family is in
fact a basis. We deduce that the determinants $\det
\Delta_0(\underline{\underline{\alpha}})$ generate the spaces $E_p$,
so they generate the whole space:
\[
E = \bigoplus_{p \in \mathbb{N}} E_p.
\qedhere
\]
\end{proof}

\subsection{The change of charts}\label{changement_cartes}

For some element $\underline{\underline{\alpha}}$ in
$\mathcal{B}^{N+}$, we introduce the homogeneous matrix
$\mathcal{H}(\underline{\underline{\alpha}})$, defined to be the
matrix with the columns:
\[
\Big(
\mathcal{C}_{\alpha^N_1}(X_N) 
\ \Big\vert \
\mathcal{C}_{\alpha^N_2}(X_N)
\ \Big\vert \
\dots
\ \Big\vert \
\mathcal{C}_{\alpha^N_{n_N}}(X_N)
\ \Big\vert \
\mathcal{C}_{\alpha^{N - 1}_1}(X_{N-1})
\ \Big\vert \
\mathcal{C}_{\alpha^{N - 1}_2}(X_{N-1})
\ \Big\vert \
\dots
\ \Big\vert \
\mathcal{C}_{\alpha^1_{n_1}}(X_1)
\ \Big\vert \ ,
\]
followed by the columns in the variable $X_0$:
\[
\ \Big\vert \
\mathcal{C}_{s_1}(X_0)
\ \Big\vert \
\mathcal{C}_{s_1 + 1}(X_0)
\ \Big\vert \
\cdots
\ \Big\vert \
\mathcal{C}_{M(\underline{\underline{\alpha}})}(X_0)
\Big),
\]
where $M(\underline{\underline{\alpha}})$ is the maximum of the set
$\{ \alpha^j_i \}_{\substack{1 \leqslant j \leqslant N \\ 1 \leqslant
i \leqslant n_j}}$. The length of the columns are chosen to be
$M(\underline{\underline{\alpha}}) + 1$, so that
$\mathcal{H}(\underline{\underline{\alpha}})$ is a square matrix.

Note that the columns of $\mathcal{H}(\underline{\underline{\alpha}})$
containing $X_0^{(0)}$ are the last $M(\underline{\underline{\alpha}})
- s_1 + 1$ columns, and similarly the lines of
$\mathcal{H}(\underline{\underline{\alpha}})$ containing $X_0^{(0)}$
are the last $M(\underline{\underline{\alpha}}) - s_1 + 1$ lines. If
we remove them, we are left with a square matrix with colums:
\[
\Big(
\mathcal{C}_{\alpha^N_1}(X_N)
\ \Big\vert \
\mathcal{C}_{\alpha^N_2}(X_N)
\ \Big\vert \
\cdots
\ \Big\vert \
\mathcal{C}_{\alpha^N_{n_N}}(X_N)
\ \Big\vert \
\mathcal{C}_{\alpha^{N-1}_1}(X_{N-1})
\ \Big\vert \
\mathcal{C}_{\alpha^{N-1}_2}(X_{N-1})
\ \Big\vert \
\cdots
\ \Big\vert \
\mathcal{C}_{\alpha^1_{n_1}}(X_1)
\Big).
\]
If, in these columns, we replace $X_i$ by $x_{0i} = \frac{X_i}{X_0}$,
we recover the matrix $\Delta_0(\underline{\underline{\alpha}})$. We
shall generalize this construction for all homogeneous variables
$X_j$.

\begin{Definition}
Let $j$ be the index of some homogeneous variable $X_j$. We define the
matrix $\Delta_j(\underline{\underline{\alpha}})$ to be the matrix
$\mathcal{H}(\underline{\underline{\alpha}})$ in which we removed all
lines and columns containing the variable $X_j^{(0)}$, and where we
replaced every other homogeneous variable $X_i$ by the affine variable
$\frac{X_i}{X_j}$.
\end{Definition}

The determinants of the matrices $\Delta_j
(\underline{\underline{\alpha}})$ will correspond to the
determinants of the dehomogenizations of the matrix
$\mathcal{H}(\underline{\underline{\alpha}})$.

\begin{Theorem}
\label{main3}
For every integer $j \in \llbracket 0, N \rrbracket$:
\[
\det \Big(
\frac{1}{X_j}\mathcal{H}(\underline{\underline{\alpha}})
\Big)
=
\pm
\det \Delta_j(\underline{\underline{\alpha}}).
\]
\end{Theorem}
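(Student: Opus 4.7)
The plan is to factor $\mathcal{H}(\underline{\underline{\alpha}}) = B \cdot A$, where $B$ is an upper-triangular matrix with $\det B = X_j^{M+1}$ (writing $M := M(\underline{\underline{\alpha}})$) and $A$ is a coefficient matrix whose determinant I will identify with $\pm\det\Delta_j(\underline{\underline{\alpha}})$. The statement then follows at once from
\[
\det\Big(\tfrac{1}{X_j}\mathcal{H}(\underline{\underline{\alpha}})\Big) \;=\; \frac{\det\mathcal{H}(\underline{\underline{\alpha}})}{X_j^{M+1}} \;=\; \frac{\det B\cdot\det A}{X_j^{M+1}} \;=\; \det A.
\]

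The key ingredient, obtained by applying Leibniz' rule to the factorization $X_t = (X_t/X_j)\cdot X_j$ and reshuffling via the standard binomial identity $\binom{\alpha}{r-1}\binom{\alpha-r+1}{m} = \binom{\alpha}{m+r-1}\binom{m+r-1}{r-1}$, is the column identity
\[
\mathcal{C}_\alpha(X_t) \;=\; \sum_{l=0}^{\alpha} \binom{\alpha}{l}\,\Big(\tfrac{X_t}{X_j}\Big)^{(\alpha-l)}\mathcal{C}_l(X_j)\qquad (t\neq j).
\]
For $B$ I take the $(M+1)\times(M+1)$ matrix with successive columns $\mathcal{C}_0(X_j),\mathcal{C}_1(X_j),\ldots,\mathcal{C}_M(X_j)$; an inspection of the entries $\binom{l}{r-1}X_j^{(l-r+1)}$ shows that $B$ is upper-triangular with constant diagonal $X_j$, hence $\det B = X_j^{M+1}$. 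The column identity then provides the factorization $\mathcal{H}(\underline{\underline{\alpha}}) = B\cdot A$ in which the coefficients of $A$ can be read off explicitly: each column of $\mathcal{H}$ of the form $\mathcal{C}_\alpha(X_j)$ (equivalently, containing $X_j^{(0)}$) gives rise to the standard basis vector $e_{\alpha+1}$ in $A$, while each column $\mathcal{C}_\alpha(X_t)$ with $t\neq j$ gives the affine analogue $\mathcal{C}_\alpha(X_t/X_j)$ (padded below row $\alpha+1$ with zeros).

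The final step is a Laplace expansion of $\det A$ along all of its standard-basis-vector columns. Each $e_{\alpha+1}$ selects row $\alpha+1$ for deletion together with its own column, and by construction $\alpha+1$ is precisely the row on which $X_j^{(0)}$ previously sat inside the column $\mathcal{C}_\alpha(X_j)$ of $\mathcal{H}$. Consequently, the rows and columns eliminated by the expansion are exactly those of $\mathcal{H}$ containing $X_j^{(0)}$, and the residual block of $A$ matches, entry by entry, the matrix $\Delta_j(\underline{\underline{\alpha}})$ of the definition.

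The main obstacle is the sign bookkeeping during this simultaneous expansion. When $j=0$, the standard-basis columns occupy the contiguous rightmost positions of $A$, yielding the clean lower-triangular block form $\left(\begin{smallmatrix}\Delta_0 & 0 \\ \ast & I\end{smallmatrix}\right)$, and $\det A = \det\Delta_0$ with no sign at all. When $j\geqslant 1$ the $n_j$ standard-basis columns are scattered inside $A$ and their $1$'s are not contiguous either; one must therefore track the signs picked up by the successive column expansions together with the row permutations that bring the $1$'s to the diagonal, and check that they collect into the single $\pm$ of the statement. This bookkeeping is elementary but is the only part of the argument that is not essentially free.
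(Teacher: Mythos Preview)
Your argument is correct and takes a cleaner route than the paper's. The paper works directly inside $\mathcal{H}(\underline{\underline{\alpha}})$: it performs column operations using only the $X_j$-columns actually present in $\mathcal{H}$ (which give only a partial cleanup), and then has to follow with an inductive sequence of row operations, line by line from the bottom up, to reduce each remaining entry to the form $\binom{\gamma}{l-1}x_{ij}^{(\gamma-l+1)}X_i$; only after dividing by $X_i$ does the block structure emerge. Your factorization $\mathcal{H}=B\cdot A$ through the \emph{full} Pascal-type matrix $B=\big(\mathcal{C}_0(X_j)\,\vert\,\cdots\,\vert\,\mathcal{C}_M(X_j)\big)$ bypasses all of this: the column identity $\mathcal{C}_\alpha(X_t)=\sum_l\binom{\alpha}{l}(X_t/X_j)^{(\alpha-l)}\mathcal{C}_l(X_j)$ does the whole job at once, and the Laplace expansion along the resulting standard basis columns $e_{\beta_t+1}$ immediately isolates $\Delta_j$. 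What you gain is brevity and conceptual clarity (a single binomial identity plus multiplicativity of the determinant, instead of two layers of elementary operations with an induction); what the paper's hands-on approach gains is that it never leaves the matrix $\mathcal{H}$ itself and requires no auxiliary matrix. Your remark that the sign bookkeeping is the only nontrivial residue is accurate, and since the statement asserts only $\pm$, nothing more is needed.
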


We deduce the following change of charts formula:
\[
\aligned
\det \Delta_0(\underline{\underline{\alpha}})
&=
\pm \det \Big(
\frac{1}{X_0}\mathcal{H}(\underline{\underline{\alpha}})
\Big) \\
&=
\pm \frac{X_j^{M(\underline{\underline{\alpha}}) + 1}}{X_0^{M(\underline{\underline{\alpha}}) + 1}}\det \Big(
\frac{1}{X_j}\mathcal{H}(\underline{\underline{\alpha}})
\Big) \\
&=
\pm \frac{1}{x_{jj}^{M(\underline{\underline{\alpha}}) + 1}}
\det \Delta_j(\underline{\underline{\alpha}}).
\endaligned
\]

Since in the other charts the highest pole has order
$M(\underline{\underline{\alpha}}) + 1$ at most, we deduce that the
local section $\Delta_0(\underline{\underline{\alpha}})$ extends
holomorphically to a global section of the twisted bundle
$E^{\text{GG}}_{k, \infty}(d)$ if $d \geqslant
M(\underline{\underline{\alpha}}) + 1$ and if $k$ is sufficiently
large.

\begin{Corollary}
Let $d$ be some fixed integer. Then, for every $k \geqslant d-1$ and
$\underline{\underline{\alpha}} \in \mathcal{B}^{N+}$ such that
$M(\underline{\underline{\alpha}}) \leqslant d-1$, we have:
\[
\Delta_0(\underline{\underline{\alpha}})
\in
\mathcal{H}^0(\mathbb{P}^N(\mathbb{C}), E^{\textbf{GG}}_{k, \infty}(d)).
\]
\end{Corollary}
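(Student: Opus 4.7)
The plan is to deduce this corollary directly from Theorem~\ref{main3} together with the change of charts formula displayed just before the statement, which itself is a consequence of Theorem~\ref{main3}. First, I would observe that $\det\,\Delta_0(\underline{\underline{\alpha}})$ is a valid local polynomial section of $E^{\text{GG}}_{k,\infty}(d)$ over $U_0$: indeed, every derivative $x_{0i}^{(l)}$ appearing in the matrix $\Delta_0(\underline{\underline{\alpha}})$ has order $l \leqslant M(\underline{\underline{\alpha}}) \leqslant d-1 \leqslant k$, so all entries live in the polynomial algebra $\mathbb{C}\bigl[x_{0i}^{(l)} : l \leqslant k\bigr]$ that parametrizes fibers of $E^{\text{GG}}_{k, \infty}$.

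Second, I would translate to each other affine chart $U_j$ with $j \in \llbracket 1, N \rrbracket$. The change of charts formula reads
\[
\det\,\Delta_0(\underline{\underline{\alpha}})
\;=\;
\pm\,\frac{1}{x_{jj}^{M(\underline{\underline{\alpha}})+1}}\,
\det\,\Delta_j(\underline{\underline{\alpha}}).
\]
Because we are working in the \emph{twisted} bundle $E^{\text{GG}}_{k,\infty}(d) = E^{\text{GG}}_{k,\infty} \otimes \mathcal{O}(d)$, the transition cocycle from $U_0$ to $U_j$ is multiplied by the $d$-th power of the cocycle of $\mathcal{O}(d)$, namely by $x_{jj}^{d}$. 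Thus, expressed as a local section on $U_j$, our candidate reads
\[
x_{jj}^{d}\,\det\,\Delta_0(\underline{\underline{\alpha}})
\;=\;
\pm\,x_{jj}^{\,d-M(\underline{\underline{\alpha}})-1}\,
\det\,\Delta_j(\underline{\underline{\alpha}}).
\]
Since by assumption $d - M(\underline{\underline{\alpha}}) - 1 \geqslant 0$, the factor $x_{jj}^{d - M(\underline{\underline{\alpha}})-1}$ is an honest polynomial, and no pole in $x_{jj}$ remains.

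Third, I would verify that $\det\,\Delta_j(\underline{\underline{\alpha}})$ itself is polynomial in the affine coordinates of $U_j$ and their derivatives up to order $k$. Its entries are obtained from $\mathcal{H}(\underline{\underline{\alpha}})$ by removing all lines and columns containing $X_j^{(0)}$ and replacing every remaining $X_i^{(l)}$ by (up to binomial coefficients) some derivative of order $\leqslant M(\underline{\underline{\alpha}})$ of an affine variable in $U_j$. Because $M(\underline{\underline{\alpha}}) \leqslant d-1 \leqslant k$, every such derivative lies within the jet range available in $E^{\text{GG}}_{k,\infty}$. Hence $\det\,\Delta_j(\underline{\underline{\alpha}})$ is a legitimate polynomial local section over $U_j$, and the collection $\{s_0,s_1,\dots,s_N\}$ so obtained satisfies the required cocycle gluing conditions, producing the desired global section.

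The main obstacle of the overall argument has in fact already been absorbed into Theorem~\ref{main3}, which provides the clean bookkeeping formula for $\det\bigl(\tfrac{1}{X_j}\mathcal{H}(\underline{\underline{\alpha}})\bigr)$; given this, the corollary reduces to tracking pole orders against the twist $d$ and checking that the jet order bound $k \geqslant d-1$ is precisely what accommodates the largest derivative $M(\underline{\underline{\alpha}}) \leqslant d-1$. No further computation is needed.
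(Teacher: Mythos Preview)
Your proposal is correct and follows essentially the same approach as the paper: both deduce the corollary directly from Theorem~\ref{main3} via the change of charts formula $\det\Delta_0(\underline{\underline{\alpha}}) = \pm x_{jj}^{-(M(\underline{\underline{\alpha}})+1)}\det\Delta_j(\underline{\underline{\alpha}})$, observing that the pole order $M(\underline{\underline{\alpha}})+1 \leqslant d$ is absorbed by the twist $\mathcal{O}(d)$. You are in fact more explicit than the paper about why the hypothesis $k \geqslant d-1$ is exactly what is needed (the paper merely says ``if $k$ is sufficiently large''), and you spell out the verification that the resulting local expressions on each $U_j$ are genuine polynomial sections of $E^{\text{GG}}_{k,\infty}$.
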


We have thus found a free family of global polynomial sections.
Furthermore, the cardinality of this family is given by the

\begin{Lemma}
\label{cardinal_famille_libre}
The number of elements
$\underline{\underline{\alpha}} \in \mathcal{B}^{N+}$
 such that 
$M(\underline{\underline{\alpha}}) \leqslant d-1$
equals $(N+1)^d$
\end{Lemma}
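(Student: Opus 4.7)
The plan is to stratify $\mathcal{B}^{N+}$ by the tuple of block sizes $(n_1, \ldots, n_N)$, count the admissible sequences in each stratum using binomial coefficients, and observe that the resulting sum collapses to $(N+1)^d$ by the multinomial theorem.

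First, I would fix nonnegative integers $(n_1, \ldots, n_N)$ and, keeping the paper's notation $s_{j+1} = n_N + n_{N-1} + \cdots + n_{j+1}$ (with $s_{N+1} = 0$), count the sequences $\underline{\underline{\alpha}} \in \mathcal{B}^{N+}$ with those block sizes and with $M(\underline{\underline{\alpha}}) \leqslant d-1$. By the defining inequalities of $\mathcal{B}^{N+}$, each block $\underline{\alpha^j} = (\alpha^j_1 < \cdots < \alpha^j_{n_j})$ is a strictly increasing sequence in the integer range $\{s_{j+1}, s_{j+1}+1, \ldots, d-1\}$, and conversely every such choice yields a valid block. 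Since a strictly increasing sequence is determined by its underlying set, the number of admissible blocks of length $n_j$ is exactly $\binom{d - s_{j+1}}{n_j}$, which is nonzero precisely when $s_j \leqslant d$.

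Next, multiplying over $j = 1, \ldots, N$, the product telescopes into a single multinomial coefficient:
\[
\prod_{j=1}^N \binom{d - s_{j+1}}{n_j}
\,=\,
\binom{d}{n_N}\binom{d-n_N}{n_{N-1}}\cdots\binom{d-n_N-\cdots-n_2}{n_1}
\,=\,
\frac{d!}{n_1!\,n_2!\,\cdots\,n_N!\,(d-s_1)!}.
\]
Introducing the slack variable $n_0 := d - s_1 \geqslant 0$, summing over all valid $(n_1, \ldots, n_N)$ with $s_1 \leqslant d$ becomes a sum over all $(N+1)$-tuples $(n_0, n_1, \ldots, n_N)$ of nonnegative integers with $n_0 + n_1 + \cdots + n_N = d$, and the multinomial theorem then yields
\[
\sum_{\substack{n_0 + n_1 + \cdots + n_N = d \\ n_i \geqslant 0}}
\frac{d!}{n_0!\,n_1!\,\cdots\,n_N!}
\,=\,
(\underbrace{1 + 1 + \cdots + 1}_{N+1})^d
\,=\,
(N+1)^d.
\]

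The only conceptual step to watch out for is the recognition that the slack $n_0 = d - s_1$ plays the role of an $(N+1)$-st ``phantom'' block size; this is what converts the inequality-constrained sum over $N$-tuples into the equality-constrained multinomial sum over $(N+1)$-tuples. Everything else is routine bookkeeping, and I do not foresee any substantive obstacle.
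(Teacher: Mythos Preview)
Your proof is correct, but it takes a different route from the paper's. The paper establishes the count by exhibiting an explicit bijection between $\llbracket 0, N \rrbracket^d$ and the set $\{\underline{\underline{\alpha}} \in \mathcal{B}^{N+} : M(\underline{\underline{\alpha}}) \leqslant d-1\}$: given a word $(u_0, \ldots, u_{d-1})$ over the alphabet $\{0, \ldots, N\}$, it records the positions of the letters equal to $N$ as the block $\underline{\alpha}^N$, deletes those positions and relabels the survivors starting from $n_N$, then records the positions of $N-1$ as $\underline{\alpha}^{N-1}$, and so on. Your argument instead stratifies by the tuple of block sizes, reduces each stratum to a product of binomial coefficients $\prod_j \binom{d - s_{j+1}}{n_j}$, telescopes this into a multinomial coefficient $\binom{d}{n_0, n_1, \ldots, n_N}$ with the slack $n_0 = d - s_1$, and sums via the multinomial theorem. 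The two are closely related\,---\,your phantom block size $n_0$ is exactly the number of letters equal to $0$ in the paper's word, and your multinomial coefficient counts the words with prescribed letter multiplicities\,---\,but the paper's bijection is constructive and gives an explicit inverse, whereas your enumerative argument is shorter and sidesteps the relabelling bookkeeping. Either approach is entirely adequate here.
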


\begin{proof}
We shall describe a bijection between the sets 
$\llbracket 0, N \rrbracket^d$
and 
$\mathcal{B}^{N+}$.
Let $(u_0, u_1, \dots, u_{d-1})$ be a $d$-tuple in
$\llbracket 0, N \rrbracket^d$.
Denote by $0 \leqslant \alpha_1^N < \alpha_2^N < \cdots < \alpha_{n_N}^N$ the sequence of indexes such that 
\[
u_{\alpha_i^N} = N
\quad \quad
\text{for all} \  i \in \llbracket 1, n_N \rrbracket.
\]
Now, in the $d$-tuple $(u_0, u_1, \dots, u_{d-1})$, remove the terms equal to $N$, and relabel the remaining terms in order to have a $(d-n_N)$-tuple $(u_{n_N}, u_{n_N + 1}, \dots, u_{d-1})$
in
$\llbracket 0, N-1 \rrbracket^{d-n_N}$.
Using this new tuple, we define the sequence 
$n_N \leqslant \alpha_1^{N-1} < \alpha_2^{N-1} < \cdots < \alpha_{n_{N-1}}^{N-1}$ of indexes such that
\[
u_{\alpha_i^{N-1}} = N-1
\quad \quad
\text{for all} \  i \in \llbracket 1, n_{N-1} \rrbracket.
\]
By repeating this process, we define $N$ stricly increasing sequences 
\[
\underline{\alpha}^N, 
\underline{\alpha}^{N-1},
\dots,
\underline{\alpha}^1,
\]
which satisfy
\[
s_{i + 1} \leqslant \alpha_i^j
\quad \quad 
\text{for all} \ j \in \llbracket 1, N \rrbracket, i \in \llbracket 1, n_{j} \rrbracket,
\]
so that $(\underline{\alpha}^1, \underline{\alpha}^2, \dots, \underline{\alpha}^N)$ is an element in $\mathcal{B}^{N+}$.

This construction is clearly reversible, so that it defines a bijection.
\end{proof}

We conclude that we have the following lower bound:

\begin{Corollary}
For every integer $d$ and every $k \geqslant d-1$, the dimension of
the space of global sections of the twisted bundle
$E^{\textbf{GG}}_{k, \infty}(d)$ is greater or equal to $(N+1)^d$:
\[
\dim \mathcal{H}^0(\mathbb{P}^N(\mathbb{C}), E^{\textbf{GG}}_{k, \infty}(d))
\geqslant
(N+1)^d.
\]
\end{Corollary}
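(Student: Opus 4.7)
The plan is to assemble three ingredients already established in the paper and observe that they immediately yield the claimed lower bound; no new computation is required.

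First, I would invoke the preceding Corollary, which guarantees that whenever $\underline{\underline{\alpha}} \in \mathcal{B}^{N+}$ satisfies $M(\underline{\underline{\alpha}}) \leqslant d-1$ and $k \geqslant d-1$, the determinant $\det \Delta_0(\underline{\underline{\alpha}})$ is a bona fide global polynomial section of $E^{\textbf{GG}}_{k, \infty}(d)$ over $\mathbb{P}^N(\mathbb{C})$. This gives an explicit family:
\[
\mathcal{F}_d
\,:=\,
\Big\{
\det \Delta_0(\underline{\underline{\alpha}})
\,\colon\,
\underline{\underline{\alpha}} \in \mathcal{B}^{N+},\,
M(\underline{\underline{\alpha}}) \leqslant d-1
\Big\}
\,\subset\,
H^0\big(\mathbb{P}^N(\mathbb{C}),\, E^{\textbf{GG}}_{k, \infty}(d)\big).
\]

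Next, I would appeal to Theorem~\ref{th_base2}, which asserts that the whole family $\big\{ \det \Delta_0(\underline{\underline{\alpha}}) \big\}_{\underline{\underline{\alpha}} \in \mathcal{B}^{N+}}$ is a basis of the polynomial space $E$. In particular, any subfamily is linearly independent, so $\mathcal{F}_d$ is a free family inside $H^0\big(\mathbb{P}^N(\mathbb{C}),\, E^{\textbf{GG}}_{k, \infty}(d)\big)$.

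Finally, Lemma~\ref{cardinal_famille_libre} computes the cardinality of the indexing set as exactly $(N+1)^d$, via the explicit bijection with $\llbracket 0, N \rrbracket^d$. Combining these three facts: we have exhibited a linearly independent family of $(N+1)^d$ elements inside $H^0\big(\mathbb{P}^N(\mathbb{C}),\, E^{\textbf{GG}}_{k, \infty}(d)\big)$, which forces the dimension of this space to be at least $(N+1)^d$, as claimed.

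There is, in effect, no real obstacle here: the entire content of the statement has been prepared in the three preceding results, and the corollary is a pure aggregation. The only minor sanity check I would perform is that linear independence of $\mathcal{F}_d$ viewed as elements of $E$ (via the chart $U_0$) coincides with linear independence as global sections, which is immediate because restriction to $U_0$ is injective on global sections.
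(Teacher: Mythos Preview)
Your proposal is correct and follows essentially the same approach as the paper: the paper likewise presents this corollary as an immediate aggregation of the preceding Corollary (global sections), Theorem~\ref{th_base2} (linear independence), and Lemma~\ref{cardinal_famille_libre} (cardinality $(N+1)^d$), with no additional argument.
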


We will now give the proof of Theorem~\ref{main3}.

\begin{proof}
In the matrix $\mathcal{H}(\underline{\alpha})$, 
there are some columns in the variable $X_i$:
\[
\mathcal{C}_{\beta_1}(X_i), 
\mathcal{C}_{\beta_2}(X_i), 
\dots, 
\mathcal{C}_{\beta_m}(X_i).
\]
The entry of $\mathcal{C}_{\beta_t}(X_i)$ on line $l$ is:
\begin{equation} \label{lignel}
\binom{\beta_t}{l-1}X_i^{(\beta_t - l + 1)},
\end{equation}
with the conventions:
\[
\aligned
\binom{a}{b}
&=
0
\quad \quad 
\text{if} \ a < b, \\
X_i^{(\centersmallbullet)}
&=
0
\quad \quad 
\text{if} \ \centersmallbullet < 0.
\endaligned
\]
Now, consider some column in the variable $X_j$ in
$\mathcal{H}(\underline{\alpha})$:
\[
\mathcal{C}_{\gamma}(X_j),
\]
which has, on the line $l$, the entry:
\[
\binom{\gamma}{l-1}X_j^{(\gamma - l + 1)}
=
\binom{\gamma}{l-1}
\sum_{q=0}^{\gamma - l + 1}
\binom{\gamma - l + 1}{q}
x_{ij}^{(q)}X_i^{(\gamma - l + 1 - q)}.
\]
In particular, there is the term :
\[
\binom{\gamma}{l-1}
\binom{\gamma - l + 1}{\gamma - \beta_t}
X_i^{(\beta_t - l + 1)}
x_{ij}^{(\gamma - \beta_t)}.
\]
The ratio of this term and~\eqref{lignel} takes the nice form:
\[
\frac{
\binom{\gamma}{l-1}
\binom{\gamma - l + 1}{\gamma - \beta_t}
X_i^{(\beta_i - l + 1)}
x_{ij}^{(\gamma - \beta_t)}
}{
\binom{\beta_t}{l-1}
X_i^{(\beta_t-l+1)}
}
=
\binom{\gamma}{\beta_t}x_{ij}^{(\gamma - \beta_t)}.
\]
Thus, by making the following operation on the columns:
\[
\mathcal{C}_{\gamma}(X_j) 
\longleftarrow 
\mathcal{C}_{\gamma}(X_j) 
- 
\sum_{t=1}^m
\binom{\gamma}{\beta_t}x_{ij}^{(\gamma - \beta_t)}
\mathcal{C}_{\beta_t}(X_i),
\]
we get rid of all the terms containing one of the following variables:
\[
x_{ij}^{(\gamma - \beta_1)}, 
x_{ij}^{(\gamma - \beta_2)}, 
\dots, 
x_{ij}^{(\gamma-\beta_m)}.
\]

We do the same operation on all the columns that are in the variable
$X_j$, with $j \neq i$.  On some column $\mathcal{C}_{\gamma}(X_j)$,
the entry on line $l$ is now:
\[
\binom{\gamma}{l-1}
\sum_{\underset{q \neq \gamma - \beta_t}{q=0}}^{\gamma - l + 1}
\binom{\gamma - l + 1}{q}x_{ij}^{(q)}X_i^{(\gamma - l + 1 - q)}.
\]
On some fixed line $l$ with $l \not\in \{\beta_1 + 1, \beta_2 + 1,
\dots, \beta_m + 1\}$, the term corresponding to $q = \gamma - l + 1$
(which is the last term of the sum), is:
\[
\binom{\gamma}{l-1}
x_{ij}^{(\gamma - l + 1)}
X_i.
\]
By using operations on the lines, we will make every other term
disappear.

We will proceed by induction, starting on the last line and going
upwards.

Let $l$ denote the number of the last line. By the construction of
$\mathcal{H}(\underline{\alpha})$, the entry on the $l^{\text{th}}$
line of $\mathcal{C}_{\gamma}(X_j)$ is either $0$ or $x_{ij}X_i$. So
we can start the induction.

Suppose that the induction hypothesis is true for all lines $l
\geqslant L+1$, where $L$ is the number of the line on which we want
to make the induction.

So far, the column looks like this:
\begin{center}
$\begin{pmatrix}
\sum_{q=0}^{\gamma} \binom{\gamma}{q}x_{ij}^{(q)}X_i^{(\gamma - q)} \\
\vdots \\
\binom{\gamma}{L-1}
\sum_{q=0}^{\gamma} \binom{\gamma-L+1}{q}x{ij}^{(q)}X_i^{(\gamma-L+1-q)} \\
\binom{\gamma}{L}x_{ij}^{(\gamma - L)}X_i \\
\binom{\gamma}{L+1}x_{ij}^{(\gamma-L-1)}X_i \\
\vdots
\end{pmatrix},$
\begin{tikzpicture}[overlay,remember picture]
\node[anchor=east] at ($(pic cs:lineOne)+(-6.6,.3)$) 
{\small $L$-th line $\rightarrow$};
\node[anchor=east] at ($(pic cs:lineOne)+(-6.6,-.7)$) 
{\small lines cleared by induction $\Bigg\{$};
\end{tikzpicture}
\end{center}
where we remove all the terms in 
$x_{ij}^{(\gamma - \beta_t)}$ for all $t=1, \dots, m$.

Now, for very $1 \leqslant k \leqslant \gamma - L + 1$ such that $k
\neq \beta_t - L + 1$ for all $t$, there is, on line number $L$, the
term:
\[
\binom{\gamma}{L-1}
\binom{\gamma - L + 1}{k}
x_{ij}^{(\gamma - L + 1 - k)}
X_i^{(k)}.
\]
On the other hand, the entry on line $L + K$, which has already been
cleared during the induction, is:
\[
\binom{\gamma}{L+k-1}
x_{ij}^{(\gamma - L + 1 - k)}
X_i.
\]
Therefore, the ratio of the two previous terms is equal to:
\[
\binom{L+k-1}{k}
\frac{X_i^{(k)}}{X_i}.
\]
Thus we may apply the following operation on the lines:
\[
\text{line}(L) 
\ \longleftarrow \ 
\text{line}(L)
-
\sum_{k=1}^{\gamma - L + 1}
\binom{L+k-1}{k}
\frac{X_i^{(k)}}{X_i}
\ \text{line}(L+k).
\]
On line $L$ there will be only $\binom{\gamma}{L-1}
x_{ij}^{(\gamma -L+1)}X_i$ left, completing the induction. 

Divide now every entry by $X_i$. Then our matrix looks like this:
\begin{center}
$\begin{pmatrix}
x_{ij}^{(\gamma_1)} & x_{ij}^{(\gamma_2)} & \cdots & \cdots & \bigast & \bigast & \bigast \\
\binom{\gamma_1}{1}x_{ij}^{(\gamma_1) - 1} &
\binom{\gamma_2}{1}x_{ij}^{(\gamma_2) - 1} &
\cdots & \cdots & \bigast & \bigast & \bigast \\
\vdots & \vdots & \vdots & \vdots & \bigast & \bigast & \bigast \\
0 & 0 & 0 & \cdots & 1 & \bigast & \bigast \\
\vdots & \vdots & \vdots & \vdots & 0 & \bigast & \bigast \\
0 & 0 & 0 & \cdots & 0 & 1 & \bigast \\
\vdots & \vdots & \vdots & \vdots & 0 & 0 & \bigast \\
0 & 0 & 0 & \cdots & 0 & 0 & 1 \\
\end{pmatrix}$.
\begin{tikzpicture}[overlay,remember picture]
\draw[color=red] (-2.1,2.1) -- (-2.1,-2.1);
\end{tikzpicture}
\end{center}
The red line separates the $X_j$ columns from the $X_i$ columns.

We see that, for the determinant not to be zero, one has to take all
the $1$'s in the right part of the matrix. This leaves us with the
matrix described by the theorem.
\end{proof}

\subsection{The non-vanishing pole}

In this section we will work in the chart $U_1$, 
and we will therefore use the lightened notations:
\[
y_i
=
x_{1i}
\ \ \text{for all} \ i \in \llbracket 1, N \rrbracket.
\]
We use the same notations for the monomials in the variables $\{
y_i^{(l)} \}$ as the ones we used for the monomials in $\{
x_{0i}^{(l)} \}$: if $\underline{\underline{\alpha}} =
(\underline{\alpha}^1, \underline{\alpha}^2, \dots,
\underline{\alpha}^N)$ is some element in $\mathcal{A}^N$, then we
denote:
\[
y^{(\underline{\underline{\alpha}})}
=
y_N^{(\alpha^N_1)}y_N^{(\alpha^N_2)} \cdots y_N^{(\alpha^N_{n_N})}
y_{N-1}^{(\alpha^{N-1}_1)}y_{N-1}^{(\alpha^{N-1}_2)} \cdots
y_1^{(\alpha^1_{n_1})}.
\]

Furthermore, we use the same order on those monomials as the one we
used for the monomials in $\{ x_{0i}^{(l)} \}$. We start by ordering
the variables:
\[
\aligned
1 &\prec y_N \prec y_N' \prec y_N'' \prec \cdots \\
&\prec y_{N-1} \prec y_{N-1}' \prec y_{N-1}'' \prec \cdots\\
&\,\,\:\vdots \\
&\prec y_1 \prec y_1' \prec y_1'' \prec \cdots .
\endaligned
\]

Now, let $\chi_1\chi_2\cdots \chi_n$ and $\chi'_1\chi'_2\cdots
\chi'_m$ be two monomials in the variables $\{ y_i^{(l)} \}$, and
suppose the variables are already ordered:
\[
\chi_1 \leqslant \chi_2 \leqslant \cdots \leqslant \chi_n
\ \ \ \text{and} \ \ \
\chi_1' \leqslant \chi_2' \leqslant \cdots \leqslant \chi_m'.
\]
We will say that $\chi_1 \chi_2 \cdots \chi_n \prec
\chi_1'\chi_2'\cdots \chi_{m}'$ if and only if:
\[
\chi_n \prec \chi_m',
\]
or
\[
\chi_n = \chi_m'
\ \ \text{and} \ 
\chi_1 \chi_2 \cdots \chi_{n-1} \prec \chi_1'\chi_2'\cdots \chi_{m-1}'.
\]

By induction, this defines a total ordering on the set of monomials in
the variables $\{ y_i^{(l)} \}$.

We know that $\det \Delta_0(\underline{\underline{\alpha}})$ is a
local section of the Green-Griffiths bundle over the open set $U_0$,
and that after the change of chart it becomes
$\frac{1}{y_1^{M(\underline{\underline{\alpha}}) + 1}}\det
\Delta_1(\underline{\underline{\alpha}})$ over the open set $U_1$. If
we expand the determinant, we get a linear combination of
rational terms having the form:
\[
\frac{y^{(\underline{\underline{a}})}}{y_1^s},
\]
where $1 \leqslant a^1_1$, so that the fraction is irreductible. We
will define an ordering 
on those rational terms that generalizes the ordering
we introduced on the monomials.

For two rational terms
$\frac{y^{(\underline{\underline{a}})}}{y^s}$ 
and 
$\frac{y^{(\underline{\underline{b}})}}{y^{s'}}$,
we declare that:
\[
\frac{y^{(\underline{\underline{a}})}}{y^s}
\prec
\frac{y^{(\underline{\underline{b}})}}{y^{s'}}
\]
if:
\[
s > s'
\]
or:
\[
s = s \ \ \ \text{and} \ \ \ y^{(\underline{\underline{a}})} \prec y^{(\underline{\underline{b}})}.
\]

We will now describe the smallest term that appears in the expansion
of $\frac{1}{y_1^{M(\underline{\underline{\alpha}}) + 1}}\det
\Delta_1(\underline{\underline{\alpha}})$.  We start by noticing that,
since $s_2 \leqslant \alpha^N_1 < \alpha^N_2 < \cdots <
\alpha^N_{n_N}$, the variables $X_1^{(0)}$ and $X_0^{(0)}$ will only
appear in $\mathcal{H}(\underline{\underline{\alpha}})$ in the bottom
right submatrix of size $(M(\underline{\underline{\alpha}}) - s_2 + 1)
\times (M(\underline{\underline{\alpha}}) - s_2 + 1)$:
\[
\mathcal{H}(\underline{\underline{\alpha}})
=
\begin{pmatrix}
X_N^{(\alpha^N_1)} & X_N^{(\alpha^N_2)} & \cdots & X_1^{(\alpha^1_1)} & \cdots X_0^{(M(\underline{\underline{\alpha}}))} \\
& & & & \\
& & & & \\
& & & & \\
& & & \vdots & \\
& & & X_1 & \\
& & & & \\
& & & & X_0 \\
\end{pmatrix}.
\begin{tikzpicture}[overlay,remember picture]
\draw[color=red] (-3.5,.1) -- (-3.5,-1.9);
\draw[color=red] (-3.5,.1) -- (-.6,.1);
\draw[color=red] (-.6,-1.9) -- (-3.5,-1.9);
\draw[color=red] (-.6,-1.9) -- (-.6,.1);
\draw[color=green] (-3.7,.2) -- (-6.8,.2);
\draw[color=green] (-3.7,.2) -- (-3.7,2.1);
\draw[color=green] (-6.8,2.1) -- (-6.8,.2);
\draw[color=green] (-6.8,2.1) -- (-3.7,2.1);
\end{tikzpicture}
\]

Thus, when removing the lines and columns containing $X_1^{(0)}$, the
green, upper left submatrix is not affected. The product of the
diagonal entries of this submatrix yields, up to some nonzero
multiplicative constant:
\[
X_N^{(\alpha^N_1)}X_N^{(\alpha^N_2 - 1)}X_N^{(\alpha^N_3 - 2)} \cdots
X_{N-1}^{(\alpha^{N-1}_1 - s_N)}X_{N-1}^{(\alpha^{N-1}_1 - s_N - 1)} \cdots .
\]
We now need to describe the diagonal of the bottom right submatrix. If
$\alpha^1_{n_1} = M(\underline{\underline{\alpha}})$, we can notice
that this submatrix is the same as $\mathcal{H}(\alpha^1_1 - s_2,
\alpha^1_2 - s_2, \dots, \alpha^1_{n_1} - s_2)$ with other binomial
coefficients. After dehomogenizing by
$\frac{1}{X_0^{M(\underline{\underline{\alpha}})}}$, it will
correspond to $\Delta_1(\underline{\alpha}^1)$. The product of the
diagonal entries of this matrix has already been computed in 
Section~\ref{non_vanishing_pole}, and is equal to:
\[
(y')^{\alpha^1_{n_1} - \alpha^1_{n_1 - 1} - 1}
(y'')^{\alpha^1_{n_1 - 1} - \alpha^1_{n_1 - 2} - 1}
\cdots
(y^{(n_1)})^{\alpha^1_1 - s_2}.
\]

\begin{Property}
If $\underline{\underline{\alpha}}$ is some $N$-tuple in
$\mathcal{A}^N$, such that $M(\underline{\underline{\alpha}}) =
\alpha^1_{n_1}$, then the smallest term of
$\frac{1}{y^{M(\underline{\underline{\alpha}}) +
1}_1}\Delta_1(\underline{\underline{\alpha}})$ is:
\[
\aligned
\frac{1}{y_1^{M(\underline{\underline{\alpha}}) + 1}}
\Big(
\prod_{j = 2}^{N}
\prod_{i = 1}^{n_j}
y_j^{(\alpha^j_i - i + 1 - s_{j-1})}
\Big)
&(y_1')^{\alpha^1_{n_1} - \alpha^1_{n_1 - 1} - 1}
(y_1'')^{\alpha^1_{n_1 - 1} - \alpha^1_{n_1 - 2} - 1}
\cdots \\
&(y_1^{(n_1-1)})^{\alpha^1_2 - \alpha^1_1 - 1}
(y_1^{(n_1)})^{\alpha^1_1 - s_2}.
\endaligned
\]
\end{Property}

Before giving the proof, we will make some comments on this term.

\begin{itemize}

\item The number of factors in the numerator is equal to $M(\underline{\underline{\alpha}}) - n_1 + 1$, which is the size of $\Delta_1(\underline{\underline{\alpha}})$.

\item For every $j \in \llbracket 2, N \rrbracket$, the variable $y_j$ and its derivatives $y_j', y_j'', \dots$ appear exactly $n_j$ times in the numerator.

\item The biggest derivative under which $y_1$ appears is $n_1$.
\end{itemize}

\begin{proof}
The matrix $\Delta_1(\underline{\underline{\alpha}})$ satisfies the
conditions of Theorem~\ref{minth}. Hence, its minimal monomial is
given by the product of the diagonal entries. Furthermore, this
product does not contain the factor $y_1$. Hence, the rational term we
obtain after dividing by $y_1^{M(\underline{\underline{\alpha}})}$ has
the biggest possible power on the denominator. This rational term is
therefore the smallest, and this very property guarantees us that it
does not vanish.
\end{proof}

Suppose now that $\alpha^1_{n_1} \neq
M(\underline{\underline{\alpha}})$, and consider the set
$J_M(\underline{\underline{\alpha}}) \subset \llbracket 1, N-1
\rrbracket$ of indices $j$ such that $\alpha^j_{n_j} =
M(\underline{\underline{\alpha}})$. By definition of $M$,
$J_M(\underline{\underline{\alpha}})$ is not empty. The last row of
the homogeneous matrix $\mathcal{H}(\underline{\underline{\alpha}})$
is:
\[
\begin{pmatrix}
0 & \cdots & X_{j_1} & \cdots & X_{j_2} & \cdots & X_{j_v} & \cdots & X_0
\end{pmatrix},
\]
where $J_M(\underline{\underline{\alpha}}) = \{j_1, j_2, \cdots,
j_v\}$. Since $X_1^{(0)}$ does not appear in this row (this is a
direct consequence of $\alpha^1_{n_1} \neq
M(\underline{\underline{\alpha}})$), the row is not removed in
$\Delta_1(\underline{\underline{\alpha}})$. Hence, the last row of
$\Delta_1(\underline{\underline{\alpha}})$ is:
\[
\begin{pmatrix}
0 & \cdots & y_{j_1} & \cdots & y_{j_2} & \cdots & y_{j_v} & \cdots & y_1
\end{pmatrix}.
\]

The diagonal term of $\det \Delta_1(\underline{\underline{\alpha}})$
therefore contains a $y_1$ factor, which makes it a bad candidate for
being the smallest term. We will thus have to consider some other
terms in the determinant.

For any integer $c \leqslant M(\underline{\underline{\alpha}}) + 1$,
we denote the matrix $\Delta_1(\underline{\underline{\alpha}})$ in
which we remove the last row and the $c^{\text{th}}$ column by
$\Delta_1(\underline{\underline{\alpha}})^{[c]}$. Developping the
determinant $\Delta_1$ along the last line, we get:
\[
\det \Delta_1(\underline{\underline{\alpha}})
=
\sum_{t \in J_M(\underline{\underline{\alpha}})}
(-1)^{M(\underline{\underline{\alpha}}) + s_t}y_t\det \Delta_1(\underline{\underline{\alpha}})^{[s_t]}
+
y_1\Delta_1(\underline{\underline{\alpha}})^{[s_t]}.
\]

For every $t \in J_M(\underline{\underline{\alpha}})$, the smallest
term in $\det \Delta_1(\underline{\underline{\alpha}})^{[s_t]}$ is the
diagonal term, for the matrix satisfies the condition of theorem
\eqref{minth}. We shall fix the integer $t$, and describe the diagonal
terms in $\Delta_1(\underline{\underline{\alpha}})^{[s_t]}$.

We can cut $\Delta_1(\underline{\underline{\alpha}})^{[s_t]}$ in three
vertical parts:

\begin{itemize}

\item The first part containing the columns going from $1$ to $s_t - 1$, in which the diagonal terms are unchanged, and they are:
\[
y_N^{(\alpha^N_1)}, \ y_N^{(\alpha^N_2 - 1)}, 
\ \dots, \ 
y_{t}^{(\alpha^t_{n_t - 1} - s_t + 2)}.
\]

\item The second part containing the columns going from $s_t$ to $s_2
- 1$, in which the diagonal entries are the entries that were above
the diagonal in $\Delta_1(\underline{\underline{\alpha}})$, because
we skipped a column. These terms are:
\[
y_{t-1}^{(\alpha^t_1 - s_t + 1)}, \
y_{t-1}^{(\alpha^t_2 - s_t + 0)}, \ 
\dots, \
y_{2}^{(\alpha^2_{n_2} - s_2 + 2)}.
\]

\item The third part containing the columns in $y_1$. In these
columns, we have removed the last line and all the lines numbered
$\alpha^N_1 + 1, \alpha^N_2 + 1, \dots, \alpha^N_{n_N} + 1$.

\end{itemize}

To find the diagonal entries for the columns in the variable $y_1$, we
will search them in $\mathcal{H}(\underline{\underline{\alpha}})$. In
this matrix, the lines that are strictly between the last line and
line number $\alpha^1_{n_1}+1$ will each yield a diagonal term that is
in $X_0'$. Then, the lines that are strictly between line
$\alpha^1_{n_1} + 1$ and $\alpha^1_{n_1 - 1} + 1$ will each yield a
diagonal term that is in $X_0''$.
\[
\ytableausetup{centertableaux, boxsize=1.2em}
\begin{pmatrix}
&  &  \cdots & \vdots & \vdots & \vdots & \vdots \\
&  &  \cdots & 
\begin{ytableau}
X_0''
\end{ytableau}
&  X_0''' & \vdots & \vdots  \\
 &  &  \cdots & X_0' & 
\begin{ytableau}
X_0''
\end{ytableau}
& X_0''' & \vdots  \\
&  &  \cdots & X_0 & X_0' & X_0'' & X_0''' \\
&  &  \cdots & 0 & X_0 & 
\begin{ytableau}
X_0'
\end{ytableau}
& X_0'' \\
&  &  \cdots & 0 & 0 & X_0 & 
\begin{ytableau}
X_0'
\end{ytableau} \\
&  &  \cdots & 0 & 0 & 0 & X_0 \\
\end{pmatrix}
\begin{tikzpicture}[overlay,remember picture]
\draw[color=red] (-5.1,-1.5) -- (-0.4,-1.5);
\draw[color=red] (-5.1,-.2) -- (-0.4,-.2);
\end{tikzpicture}
\]

This continues, and the lines strictly between lines number
$\alpha^1_2 + 1$ and $\alpha^1_1 + 1$ will yield diagonal terms in
$X_0^{(n_1)}$. Finally, there will be the entries that are above line
number $\alpha^1_1 + 1$. These lines will each give a factor in
$X_0^{(n_1 + 1)}$. To find out how many they are, we can use the fact
that there is a total number of $M(\underline{\underline{\alpha}}) + 1
- s_1$ columns in $X_0$, and that we have already found the diagonal
terms of:
\[
(M(\underline{\underline{\alpha}}) - \alpha^1_{n_1} - 1) + (\alpha^1_{n_1} - \alpha^1_{n_1 - 1} - 1) + \cdots + (\alpha^1_2 - \alpha^1_1 - 1)
=
M(\underline{\underline{\alpha}}) - \alpha^1_1 - n_1
\]
of them. The remaining number of factors in $X_0^{(n_1 + 1)}$ is thus
$\alpha^1_1 - s_2 + 1$.

We conclude that the smallest term in $y_t\det
\Delta_1(\underline{\underline{\alpha}})^{{[s_t]}}$ is:
\[
\aligned
y_t
\Big(
\prod_{j=t + 1}^{N}
\prod_{i = 1}^{n_j}
y_t^{(\alpha^j_i - i + 1 - s_{j+1})}
\Big)
&\Big(
\prod_{i = 1}^{n_t - 1}
y_t^{(\alpha^j_i - i + 1 - s_{t_v + 1})}
\Big)
\Big(
\prod_{j = 2}^{t - 1}
\prod_{i = 1}^{n_j}
y_j^{(\alpha^j_i - i + 2 - s_{j+1})}
\Big) \\
&(y_1')^{M(\underline{\underline{\alpha}}) - \alpha^1_{n_1} - 1}
(y_1'')^{\alpha^1_{n_1} - \alpha^1_{n_1 - 1}}
(y_1''')^{\alpha^1_{n_1-1} - \alpha^1_{n_1-2}}
\cdots \\
&(y_1^{(n_1)})^{\alpha^1_2 - \alpha^1_1 - 1}
(y_1^{(n_1 + 1)})^{\alpha^1_1 - s_2 + 1}.
\endaligned
\]

Consider now some other integer $u \in
J_M(\underline{\underline{\alpha}})$. We can suppose that $t > u$. We
will compare the smallest terms of $y_t\det
\Delta_1(\underline{\underline{\alpha}})^{{[s_t]}}$ and $y_u\det
\Delta_1(\underline{\underline{\alpha}})^{{[s_u]}}$. These two
smallest terms will share the factors in the variables $y_N, y_{N-1},
\dots, y_{t+1}$, and in $y_{u-1}, y_{u - 2}, \dots, y_1$. They will be
differences for the middle variables $y_t, y_{t-1}, \dots, y_u$. Since
$y_u$ is the biggest variable, it can (and will) determine by itself
which of the two terms is the smallest.

In \ $y_t\det \Delta_1(\underline{\underline{\alpha}})^{{[s_t]}}$, the
variables $y_u, y_u', \dots$ appear in a product:
\[
\prod_{i = 1}^{n_u}
y_u^{(\alpha^u_i - i + 2 - s_{u-1})}.
\]

In \ $y_u\det \Delta_1(\underline{\underline{\alpha}})^{{[s_u]}}$, the variables $y_u, y_u', \dots$ appear in a product:
\[
y_u
\Big(
\prod_{i = 1}^{n_u - 1}
y_u^{(\alpha^u_i - i + 1 - s_{u-1})}
\Big).
\]

The derivatives in the first product are greater than the ones in the
second product, so we conclude that if $t > u$, then the smallest term
of $y_u\det \Delta_1(\underline{\underline{\alpha}})^{{[s_u]}}$ is
smaller than the smallest term of $y_t\det
\Delta_1(\underline{\underline{\alpha}})^{{[s_t]}}$. We deduce the

\begin{Property}
Let $\underline{\underline{\alpha}}$ be a $N$-tuple of sequences in
$\mathcal{B}^{N+}$ such that $\alpha^1_{n_1} \neq
M(\underline{\underline{\alpha}})$, and let $t_v$ be the smallest
index in $J_M(\underline{\underline{\alpha}})$. The smallest rational
term in $\frac{\det \Delta_1( \underline{ \underline{\alpha}})}{
y_1^{M( \underline{ \underline{\alpha}})}}$ is:
\[
\aligned
\frac{y_{t_v}}{y_1^{M(\underline{\underline{\alpha}}) + 1}}
&\Big(
\prod_{j=t_v + 1}^{N}
\prod_{i = 1}^{n_j}
y_t^{(\alpha^j_i - i + 1 - s_{j-1})}
\Big)
\Big(
\prod_{i = 1}^{n_{t_v} - 1}
y_{t_v}^{(\alpha^{t_v}_i - i + 1 - s_{t_v-1})}
\Big)
\Big(
\prod_{j = 2}^{t_v - 1}
\prod_{i = 1}^{n_j}
y_j^{(\alpha^j_i - i + 2 - s_{j-1})}
\Big) \\
&(y_1')^{M(\underline{\underline{\alpha}}) - \alpha^1_{n_1} - 1}
(y_1'')^{\alpha^1_{n_1} - \alpha^1_{n_1 - 1}}
(y_1''')^{\alpha^1_{n_1-1} - \alpha^1_{n_1-2}}
\cdots \\
&(y_1^{(n_1)})^{\alpha^1_2 - \alpha^1_1 - 1}
(y_1^{(n_1 + 1)})^{\alpha^1_1 - s_2 + 1}.
\endaligned
\]
\end{Property}

There are four important comments to make here:

\begin{itemize}

\item The number of factors in the numerator is equal to
$M(\underline{\underline{\alpha}}) - n_1 + 1$, which is the size of
$\Delta_1(\underline{\underline{\alpha}})$.

\item For every $j \in \llbracket 2, N \rrbracket$, the variable $y_j$
and its derivatives $y_j', y_j'', \dots$ appear exactly $n_j$ times
in the numerator.

\item The variable $y_1^{(n_1 + 1)}$ appears as a factor, as
$\alpha^1_1 - s_2 + 1 \geqslant 1$.

\item Amongst the non-derivated variables $y_N \prec y_{N-1} \prec
\cdots \prec y_2$, the biggest one appearing as a factor in the
numerator is $y_{t_v}$.

\end{itemize}

Now that we have an explicit description of the minimal terms of the
$\frac{\det \Delta_1( \underline{ \underline{\alpha}})}{ y_1^{M(
\underline{ \underline{\alpha}})}}$, we can state our main lemma
that will be used to prove the final main
Theorem~{\ref{main-thm-end}}.

\begin{Lemma}
If $\underline{\underline{\alpha}} \neq \underline{\underline{\beta}}$
are two elements in $\mathcal{B}^{N+}$, then the minimal terms of:
\[
\frac{\det \Delta_1(\underline{\underline{\alpha}})}{y_1^{M(\underline{\underline{\alpha}}) + 1}}
\ \ \ \text{and} \ \ \
\frac{\det \Delta_1(\underline{\underline{\beta}})}{y_1^{M(\underline{\underline{\beta}}) + 1}}
\]
are different.
\end{Lemma}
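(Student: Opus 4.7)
The plan is to argue by contrapositive: assume the two minimal terms coincide, and then reconstruct $\underline{\underline{\alpha}}$ entirely from its minimal term, forcing $\underline{\underline{\alpha}} = \underline{\underline{\beta}}$. The reconstruction proceeds in several stages, using the explicit formulas given in the two preceding Properties.

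I first read off the block sizes. The power of $y_1$ in the denominator gives $M := M(\underline{\underline{\alpha}})$. The total number of factors in the numerator equals the size $M + 1 - n_1$ of $\Delta_1(\underline{\underline{\alpha}})$, which determines $n_1$; and for each $j \in \llbracket 2, N \rrbracket$, the total multiplicity of $y_j^{(\cdot)}$-factors in the numerator equals $n_j$ (as noted after each Property), so all $n_j$, and hence all $s_j$, are known. Next, the presence or absence of $y_1^{(n_1+1)}$ --- which appears precisely in case~2, with the strictly positive exponent $\alpha^1_1 - s_2 + 1$ --- tells me which case applies, so $\underline{\underline{\alpha}}$ and $\underline{\underline{\beta}}$ must lie in the same case. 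In case~1, the reconstruction is direct: the exponents of $y_1', y_1'', \dots, y_1^{(n_1)}$ give the successive quantities $\alpha^1_{n_1} - \alpha^1_{n_1-1} - 1$, $\dots$, $\alpha^1_2 - \alpha^1_1 - 1$, $\alpha^1_1 - s_2$, so starting from $\alpha^1_{n_1} = M$ I obtain $\alpha^1_{n_1-1}, \dots, \alpha^1_1$ in turn; and for each $j \geqslant 2$, the $n_j$ strictly increasing derivative orders of the $y_j^{(\cdot)}$-factors are exactly $\alpha^j_i - s_{j+1} - i + 1$ for $i = 1, \dots, n_j$, which determines each $\alpha^j_i$.

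In case~2 the first task --- the main obstacle of the proof --- is to identify $t_v$ purely from the minimal term. The key observation is that for $j \in \llbracket 2, t_v - 1 \rrbracket$ the diagonal exponents of the $y_j^{(\cdot)}$-factors coming from $\Delta_1(\underline{\underline{\alpha}})^{[s_{t_v}]}$ are all shifted by $+1$ compared to case~1 (the column removal at position $s_{t_v}$ displaces the diagonal), hence strictly positive; whereas for $j = t_v$ the free factor $y_{t_v}^{(0)}$ is present by construction. Therefore
\[
t_v \;=\; \min\big\{\, j \in \llbracket 2, N \rrbracket \,\colon\, y_j^{(0)} \text{ appears in the minimal term}\,\big\},
\]
which is legible from the data. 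Once $t_v$ is known, the same inversion as in case~1 --- with the $+1$ shifts applied for blocks $j < t_v$ and no shift for $j > t_v$ --- recovers all $\alpha^j_i$ with $j \neq t_v$. For block $t_v$ itself one must isolate the free $y_{t_v}^{(0)}$: if $\alpha^{t_v}_1 > s_{t_v+1}$ it appears with multiplicity one, while if $\alpha^{t_v}_1 = s_{t_v+1}$ the diagonal also contributes a $y_{t_v}^{(0)}$ for multiplicity two; in either case, since the total count of $y_{t_v}$-factors equals the known $n_{t_v}$, removing exactly one $y_{t_v}^{(0)}$ (the free one) from the multiset leaves the $n_{t_v} - 1$ strictly increasing diagonal exponents $\alpha^{t_v}_i - s_{t_v+1} - i + 1$ for $i = 1, \dots, n_{t_v} - 1$, from which $\alpha^{t_v}_1, \dots, \alpha^{t_v}_{n_{t_v}-1}$ are read off; combined with $\alpha^{t_v}_{n_{t_v}} = M$ this completes the reconstruction.
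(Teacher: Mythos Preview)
Your proof is correct and follows essentially the same contrapositive strategy as the paper: read off $M$, the block sizes $n_j$, the case (via the presence of $y_1^{(n_1+1)}$), and in case~2 the index $t_v$ (as the smallest $j\geqslant 2$ with $y_j^{(0)}$ present), then recover every $\alpha^j_i$ from the exponents. One small slip to fix: for $j\geqslant 2$ the derivative orders $\alpha^j_i - s_{j+1} - i + 1$ are only \emph{weakly} increasing (they form exactly $\tau(\underline{\alpha}^j)$, an element of $\mathcal{A}$), not strictly --- but the reconstruction still goes through, since sorting the multiset and applying $\tau^{-1}$ recovers the strictly increasing $\alpha^j_i$.
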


\begin{proof}
Let $\underline{\underline{\alpha}} = (\underline{\alpha}^1,
\underline{\alpha}^2, \dots, \underline{\alpha}^N)$ and
$\underline{\underline{\beta}} = (\underline{\beta}^1,
\underline{\beta}^2, \dots, \underline{\beta}^N)$. We will denote the
elements of the sequences by $\underline{\alpha}^i = (\alpha^i_1,
\alpha^i_2, \dots, \alpha^i_{n_i})$ and $\underline{\beta}^i =
(\beta^i_1, \beta^i_2, \dots, \beta^i_{m_i})$ for all $i \in
\llbracket 2, N \rrbracket$.  We will suppose that $\frac{\det
\Delta_1(\underline{\underline{\alpha}})}{y_1^{M( \underline{ 
\underline{\alpha}})
+ 1}}$ and $\frac{\det
\Delta_1(\underline{\underline{\beta}})}{y_1^{M(\underline
{\underline{\beta}})
+ 1}}$ share the same smallest term. In that case,
$\underline{\underline{\alpha}}$ and $\underline{\underline{\beta}}$
must have the same maximum terms $M =
M(\underline{\underline{\alpha}}) = M(\underline{\underline{\beta}})$.
Furthermore, the number of factors constituting the numerator of the
smallest terms are respectively:
\[
M - n_1 + 1
\ \ \ \text{and} \ \ \
M - m_1 + 1.
\]
Thus we must have $n_1 = m_1$. We put $B = n_1 = m_1$. For all $i \in
\llbracket 2, N \rrbracket$, the number of factors in $y_i$ and its
derivatives $y_i', y_i'', \dots$ that are in the smallest terms must
also coincide, so that we have $n_i = m_i$ for all $i \in \llbracket
2, N \rrbracket$.

Suppose that $\alpha^1_{B} = M$ and that $\beta^1_{B} \neq M$. In that
case, the factor $y^{(B + 1)}$ appears in the smallest term of
$\frac{\det \Delta_1( \underline{ \underline{ \alpha}})}{ y_1^{M(
\underline{ \underline{\alpha}}) + 1}}$. However, it does not
appear in the smallest term of $\frac{\det \Delta_1( \underline{
\underline{\beta}})}{ y_1^{M( \underline{ \underline{\beta}}) +
1}}$, so that the two terms can not be equal. Hence we are in one
of two cases: The case where $\alpha^1_B$ and $\beta^1_B$ are both
equal to the maximum $M$, and the case where they both are not.

Suppose that $\alpha^1_B = \beta^1_B = M$. The smallest terms will
contain some factors in the variables $y_N, y_{N - 1}, \dots, y_2$ and
their derivatives. These factors are:
\[
\prod_{j = 2}^{N}
\prod_{i = 1}^{n_j}
y_j^{(\alpha^j_i - i + 1 - s_{j-1})}
\ \ \ \text{and} \ \ \
\prod_{j = 2}^{N}
\prod_{i = 1}^{m_j}
y_j^{(\beta^j_i - i + 1 - s_{j-1})}.
\]
For them to be equal, one must have $\alpha^j_i = \beta^j_i$ for all
$j \in \llbracket 2, N \rrbracket$ and $i \in \llbracket 1, n_j
\rrbracket$.

Finally, the two smallest terms will contain some factors in $y_1,
y_1', y_1'', \dots$, that are:
\[
(y_1')^{\alpha^1_B - \alpha^1_{B - 1} - 1}
(y_1'')^{\alpha^1_{B - 1} - \alpha^1_{B - 2} - 1}
\cdots
(y_1^{(B-1)})^{\alpha^1_2 - \alpha^1_1 - 1}
(y_1^{(B)})^{\alpha^1_1 - s_2}.
\]
and
\[
(y_1')^{\beta^1_B - \beta^1_{B - 1} - 1}
(y_1'')^{\beta^1_{B - 1} - \beta^1_{B - 2} - 1}
\cdots
(y_1^{(B-1)})^{\beta^1_2 - \beta^1_1 - 1}
(y_1^{(B)})^{\beta^1_1 - s_2}.
\]
We already know that $\alpha^1_B = \beta^1_B$. By induction, we can
then show that $\alpha^1_{B-t} = \beta^1_{B-t}$ for all $t \in
\llbracket 0, B-1 \rrbracket$, which proves that the elements
$\underline{\underline{\alpha}}$ and $\underline{\underline{\beta}}$
are equal.

Suppose now that $\alpha^1_B \neq M$ and that $\beta^1_B \neq M$. Let
$t(\underline{\underline{\alpha}})$ be the smallest index such that
$\alpha^t_{n_t} = M$, and let $t(\underline{\underline{\beta}})$ be
the smallest index such that $\beta^t_{m_t} = M$.  Amongst the
non-derivated variables $y_N < y_{N-1} < y_{N-2} < \cdots < y_2$, the
biggest one appearing in the smallest monomial of $\frac{\det
\Delta_1( \underline{ \underline{ \alpha}})}{ y_1^{M( \underline{
\underline{\alpha}}) + 1}}$ is
$y_{t({\underline{\underline{\alpha}}})}$, and the biggest one
appearing in the smallest monomial of $\frac{\det \Delta_1(
\underline{ \underline{\beta}})}{ y_1^{M( \underline{
\underline{\beta}}) + 1}}$ is $y_{t(\underline{ \underline{
\beta}})}$. Since the two smallest monomials are equal, they
must share their biggest variable in $y_N, y_{N-1}, \dots, y_2$. So we
have $t(\underline{\underline{\alpha}}) =
t(\underline{\underline{\beta}})$, and we denote this common index by
$t$.

If we compare the factors in $y_N, y_{N-1}, \dots, y_2$ and their
derivatives, we can conclude that $\alpha^j_{i} = \beta^j_i$ for all
$j \in \llbracket 2, N \rrbracket$ and $i \in \llbracket 1, n_j
\rrbracket$.  If we compare the factors in $y_1, y_1', y_1'', \dots$,
we see that we must have:
\[
\aligned
&(y_1')^{M - \alpha^1_{n_1} - 1}
(y_1'')^{\alpha^1_{n_1} - \alpha^1_{n_1 - 1}}
(y_1''')^{\alpha^1_{n_1-1} - \alpha^1_{n_1-2}}
\cdots 
(y_1^{(n_1)})^{\alpha^1_2 - \alpha^1_1 - 1}
(y_1^{(n_1 + 1)})^{\alpha^1_1 - s_2 + 1} \\
&=
(y_1')^{M- \beta^1_{n_1} - 1}
(y_1'')^{\beta^1_{n_1} - \beta^1_{n_1 - 1}}
(y_1''')^{\beta^1_{n_1-1} - \beta^1_{n_1-2}}
\cdots 
(y_1^{(n_1)})^{\beta^1_2 - \beta^1_1 - 1}
(y_1^{(n_1 + 1)})^{\beta^1_1 - s_2 + 1},
\endaligned
\]
which, by induction, implies that $\alpha^1_t = \alpha^1_t$ for all $t
\in \llbracket 1, n_1 \rrbracket$. We conclude that
$\underline{\underline{\alpha}} = \underline{\underline{\beta}}$.
\end{proof}

Since each local section $\det
\Delta_0(\underline{\underline{\alpha}})$ over $U_0$ will yield a
different smallest term when read in the chart $U_1$, we can conclude that
in every linear combination, there will be some term that will not
vanish, as it will be smallest of the smallest. In conclusion, we have
the following:

\begin{Theorem}
\label{main-thm-end}
For every $d \geqslant 1$ and $k \geqslant d-1$, the family:
\[
\Big\{
\Delta_0(\underline{\underline{\alpha}})
\Big\}_{M(\underline{\underline{\alpha}}) \leqslant d-1}
\]
is a basis of $H^0 \big( \mathbb{P}^N(\mathbb{C} \big),
E^{\text{GG}}_{k, n}(d))$.\qed
\end{Theorem}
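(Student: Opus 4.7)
The plan is to combine the three ingredients already established in the paper: Theorem~\ref{th_base2} gives a basis of $E$ indexed by all of $\mathcal{B}^{N+}$, the Corollary following Theorem~\ref{main3} shows that the sub-family with $M(\underline{\underline{\alpha}}) \leqslant d-1$ consists of genuine global sections of the twisted bundle, and the preceding distinctness lemma shows that, after passing to the chart $U_1$, distinct basis determinants yield pairwise distinct minimal rational terms. It remains to check two things: linear independence of the claimed sub-family, and the spanning property, namely that every global section actually lies in this sub-family.

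For linear independence, I would take a hypothetical nontrivial relation $\sum_i \lambda_i \det \Delta_0(\underline{\underline{\alpha}}^i) = 0$, rewrite each $\det \Delta_0(\underline{\underline{\alpha}}^i)$ in the chart $U_1$ via Theorem~\ref{main3}, and locate the smallest rational term among the smallest terms of the $\det \Delta_0(\underline{\underline{\alpha}}^i)$. By the distinctness lemma this \emph{smallest of the smallest} appears exactly once in the full expansion, hence cannot vanish, contradicting the relation.

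For the spanning property, which is the main obstacle, I would take an arbitrary global section $P$ and expand it in the basis $\big\{\det \Delta_0(\underline{\underline{\alpha}})\big\}_{\underline{\underline{\alpha}} \in \mathcal{B}^{N+}}$ of $E$ from Theorem~\ref{th_base2}:
\[
P = \sum_{i} \lambda_i\,\det \Delta_0(\underline{\underline{\alpha}}^i), \qquad \lambda_i \neq 0.
\]
I call an index $i$ \emph{bad} if $M(\underline{\underline{\alpha}}^i) \geqslant d$ and \emph{good} otherwise, and I aim to rule out bad indices by contradiction. After the change of chart to $U_1$, each $\det \Delta_0(\underline{\underline{\alpha}}^i)$ becomes $\pm y_1^{-(M(\underline{\underline{\alpha}}^i)+1)} \det \Delta_1(\underline{\underline{\alpha}}^i)$, whose smallest rational term (for the ordering $\prec$ defined in the $U_1$ chart) has denominator exactly $y_1^{M(\underline{\underline{\alpha}}^i)+1}$. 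For bad indices this exponent is $\geqslant d+1$, whereas every rational term coming from any good index has denominator exponent $\leqslant d$; since $\prec$ compares first by denominator exponent (larger exponent meaning smaller in $\prec$), each bad smallest term is strictly smaller than every good term. Consequently, as soon as at least one bad index is present, the overall smallest rational term of $P$ on $U_1$ must come from the bad index $i_0$ whose smallest term is minimal among all bad smallest terms.

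The decisive point is to ensure that this \emph{smallest of the smallest bad} contribution cannot be cancelled by anything else in the expansion, and this is furnished exactly by the distinctness lemma: across the whole family $\mathcal{B}^{N+}$, distinct tuples $\underline{\underline{\alpha}}$ produce distinct smallest $U_1$-terms, so this particular rational term survives in $P|_{U_1}$ with coefficient $\pm \lambda_{i_0} \neq 0$. Multiplying by $y_1^d$ to compensate the twist by $\mathcal{O}(d)$, the surviving contribution still carries a denominator $y_1^{M(\underline{\underline{\alpha}}^{i_0})+1-d}$ of order at least $1$, which contradicts the holomorphy of the local section representing $P$ on $U_1$. Hence no bad index exists, so $P$ lies in the claimed span, and combined with linear independence this gives the desired basis of cardinal $(N+1)^d$ by Lemma~\ref{cardinal_famille_libre}.
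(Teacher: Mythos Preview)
Your proof is correct and follows essentially the same approach as the paper: the spanning argument is the general-$N$ analogue of the detailed one-dimensional proof in Section~\ref{cas_unidimensionel}, using the distinctness lemma to ensure the smallest-of-the-smallest rational term survives in $U_1$ and then invoking holomorphy to bound its denominator exponent by $d$. Your separate treatment of linear independence is redundant, since the family is already a sub-family of the basis of $E$ from Theorem~\ref{th_base2}, but this does no harm.
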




\vfill

\noindent
Victor {\sc Chen}, 
\'Ecole Normale Supérieure Paris-Saclay, 
4 Avenue des Sciences, 91190 Gif-sur-Yvette, France.
{\bf victor.chen@ens-paris-saclay.fr} and
{\bf victor.chen01@hotmail.com}

\medskip

\noindent
Jo\"el {\sc Merker}, 
Laboratoire de Math\'ematiques d'Orsay,
CNRS, Universit\'e Paris-Saclay, 91405 Orsay Cedex, France. 
ORCID 0000-0003-2653-2147.
{\bf joel.merker@universite-paris-saclay.fr

\end{document}